 \makeatletter \@addtoreset{equation}{section}
\title[Nonautonomous Kolmogorov equations in the whole space]
{Nonautonomous Kolmogorov equations in the whole space: a survey on
recent results}
\author{L. Lorenzi}
\address{Dipartimento di Matematica e Informatica, Universit\`a degli Studi di Parma, Parco Area delle Scienze 53/A, I-43124 Parma, Italy.}
\email{luca.lorenzi@unipr.it}
\subjclass[2000]{Primary: 47D06; Secondary: 47F05, 35B65, 37L40, 35B10, 35B40}
\keywords{nonautonomous parabolic equations, evolution operators,gradient estimates, evolution systems of measures,
evolution semigroups, invariant measures,asymptotic behaviour, compactness}
\newtheorem{theorem}{Theorem}[section]
\newtheorem{hyp}[theorem]{Hypothesis}
\newtheorem{remark}[theorem]{Remark}{\rm}
\newtheorem{proposition}[theorem]{Proposition}
\newtheorem{example}[theorem]{Example}
\newcommand\R{{\mathbb R}}
\newcommand\N{{\mathbb N}}
\renewcommand{\phi}{\varphi}
\begin{document}

\begin{abstract}
In this paper we survey some recent results concerned with nonautonomous Kolmogorov elliptic operators.
Particular attention is paid to the case of the nonautonomous Ornstein-Uhlenbeck operator
\end{abstract}

\maketitle

\section{Introduction}

The interest in elliptic operators with unbounded coefficients in
$\R^N$ and in smooth unbounded subsets has grown sensibly in the
last decades due to their applications in many branches of applied
sciences (for instance mathematical finance). Starting from the
pioneering papers by Azencott and It\^o (see \cite{azencott,ito} and
also \cite{MPW}) the study of autonomous Kolmogorov operators has
spread out and led to an almost rich literature nowadays. We refer
the reader to \cite{bertoldi-lorenzi} and its bibliography.

One of the keystone in the analysis of autonomous nondegenerate
elliptic operators is the study of the Ornstein-Uhlenbeck operator
\begin{eqnarray*}
({\mathscr A}\varphi)(x)=\sum_{i,j=1}^Nq_{ij}D_{ij}\varphi(x)+\sum_{i,j=1}^Nb_{ij}x_jD_i\varphi(x),\qquad\;\,x\in\R^N,
\end{eqnarray*}
where $Q=(q_{ij})$ and $B=(b_{ij})$ are given constant matrices, $Q$
being positive definite. Such analysis begun in the paper
\cite{daprato-lunardi} and continued in several other papers (among
them we quote
\cite{CFMP,lorenzi-dyn,lunardi-O-U,metafune-O-U,metafune-pallara-enrico,metafune-O-U-2}).
The main feature of the Ornstein-Uhlenbeck operator, which makes it
easier to be studied than more general operators with unbounded
coefficients, is an explicit representation formula for the solution
to the Cauchy problem
\begin{equation}
\left\{
\begin{array}{lll}
D_tu(t,x)=({\mathscr A}u)(t,x), &t>0, & x\in\R^N,\\[2mm]
u(0,x)=f(x), &&x\in\R^N,
\end{array}
\right.
\label{Cauchy-intro}
\end{equation}
when $f\in C_b(\R^N)$. It turns out that $u(t,x)=(T(t)f)(x)$ for any $t>0$ and any $x\in\R^N$, where
the so called Ornstein-Uhlenbeck semigroup $(T(t))$ is defined by
\begin{eqnarray*}
(T(t)f)(x):=\frac{1}{(4\pi)^{N/2}({\rm det} Q_t)^{1/2}}\int_{\R^N}e^{-\frac{1}{4}\langle Q_t^{-1}y,y\rangle}
f(y+e^{tB}x)dy,\qquad\;\,x\in\R^N,
\end{eqnarray*}
for any $f\in C_b(\R^N)$.

For more general elliptic operators ${\mathscr A}$ with unbounded
coefficients of the form
\begin{eqnarray*}
({\mathscr A}\varphi)(x)=\sum_{i,j=1}^Nq_{ij}(x)D_{ij}\varphi(x)+\sum_{i,j=1}^Nb_jD_j\varphi(x),\qquad\;\,x\in\R^N,
\end{eqnarray*}
it has been proved that, under mild assumptions on the regularity of the coefficients $q_{ij}$ and $b_j$ ($i,j=1,\ldots,N$),
the Cauchy problem \eqref{Cauchy-intro} admits {\it at least} a bounded classical solution (i.e. there exists a bounded function $u$
which belongs to $C^{1,2}((0,+\infty)\times\R^N)\cap C([0,+\infty)\times\R^N)$ and solves the Cauchy problem
\eqref{Cauchy-intro}. In this more general setting no explicit representation formula for the function $u$ is available.

As far as the nonhomogeneous Cauchy problem
\begin{eqnarray}
\left\{
\begin{array}{lll}
D_tu(t,x)=({\mathscr A}u)(t,x)+g(t,x), & t\in [0,T], & x\in\R^N,\\[1mm]
u(0,x)=f(x), && x\in \R^N,
\end{array}
\right.
\label{pb-nonhom}
\end{eqnarray}
is concerned, under suitable algebraic and growth conditions on the coefficients of the operator ${\mathscr A}$,
some Schauder type results have been proved in \cite{BL,lunardi-studia}. More specifically, in the previous papers it has been proved that
if $f\in C^{2+\theta}_b(\R^N)$, $g\in C([0,T]\times\R^N)$ and
\begin{eqnarray*}
\sup_{t\in [0,T]}\|g(t,\cdot)\|_{C^{\theta}_b(\R^N)}<+\infty,
\end{eqnarray*}
for some $\theta\in (0,1)$, then Problem \eqref{pb-nonhom} admits a unique solution $u\in C^{1,2}([0,T]\times\R^N)$ such that
\begin{eqnarray*}
\sup_{t\in [0,T]}\|u(t,\cdot)\|_{C^{2+\theta}_b(\R^N)}\le C\left (\|f\|_{C^{2+\theta}_b(\R^N)}+
\sup_{t\in [0,T]}\|g(t,\cdot)\|_{C^{\theta}_b(\R^N)}\right ),
\end{eqnarray*}
for some positive constant $C$, independent of $f$ and $g$. %The function $u$ is given by the usual variation of constants formula
%\begin{eqnarray*}
%u(t,x)=(T(t)f)(x)+\int_0^t(T(t-s)g(s,\cdot))(x)ds,\qquad\;\,(t,x)\in [0,T]\times\R^N.
%\end{eqnarray*}
%The main tool to prove the Schauder estimates are uniform estimates for the spatial derivatives (up to the third order)
%of the function $T(t)f$, which are obtained by the classical method due to Bernstein \cite{B0}, and an interpolation method
%from \cite{lunardi-sem}.

Differently from the case when the coefficients of ${\mathscr A}$
are bounded, the semigroups associated to elliptic operators with
unbounded coefficients are, in general, neither strongly continuous
in $BUC(\R^N)$, nor analytic in $C_b(\R^N)$. Moreover, the usual
$L^p$-spaces related to the Lebesgue measure are not the suitable
$L^p$-spaces where to consider Kolmogorov semigroups. A simple
one-dimensional example in \cite{PRS} shows that the operator
\begin{align*}
({\mathscr A}\varphi)(x)=\varphi''(x)-{\rm
sign}(x)|x|^{1+\varepsilon}\varphi'(x),\qquad\;\,x\in\R,
\end{align*}
does not generate a strongly continuous semigroup in $L^p(\R)$ for
whichever $\varepsilon>0$ and $p\in [1,+\infty)$.

As a matter of fact, the $L^p$-spaces which fit best the properties
of semigroups associated with elliptic operators with unbounded
coefficients are those related to a particular measure, the
so-called invariant measure of the semigroup. Such a measure, when
existing, is characterized by the following invariance property:
\begin{eqnarray*}
\int_{\R^N}T(t)f\,d\mu=\int_{\R^N}f\,d\mu,\qquad\;\,t>0,\;\,f\in C_b(\R^N).
\end{eqnarray*}
Under rather weak assumptions on the coefficients of the operator
${\mathscr A}$, if an invariant measure of $(T(t))$ exists, then it
is unique. The most famous sufficient condition ensuring the
existence of an invariant measure is the Has'minskii criterion,
which can be stated in term of a so-called Lyapunov function. More
specifically, Has'minskii criterion states that the invariant
measure exists if there exists a smooth function $\varphi$, tending to $+\infty$ as $|x|\to +\infty$,
such that ${\mathscr A}\varphi$ tends to $-\infty$ as $|x|\to +\infty$. In the
case of the Ornstein-Uhlenbeck operator, it is known that the
invariant measure exists if and only if the spectrum of the matrix
$B$ is contained in the left open halfplane $\{\lambda\in\mathbb C:
{\rm Re}\lambda<0\}$.

Whenever the invariant measure exists, the semigroup $(T(t))$ can be
extended to $L^p(\R^N,\mu)$ by a semigroup of positive contractions,
for any $p\in [1,+\infty)$, which we still denote by $(T(t))$. The
characterization of the domain of its infinitesimal generator is an
hard and challenging task, solved only in some particular situation.
This is the case, for instance, of the Ornstein-Uhlenbeck semigroup
(see \cite{lunardi-O-U,metafune-O-U-2}) where also the spectrum of
the infinitesimal generator and the sector of analyticity have been
completely characterized (see \cite{CFMP,metafune-O-U}). We also
quote the papers \cite{daprato-lunardi-1,lorenzi-lunardi} where some
more general situations are considered. In all the cases dealt with
in the previous two papers the invariant measure is explicit and
this makes the problem easier to be studied. In the general case,
the invariant measure is not explicit and only some qualitative
properties are known. It is well-known that the invariant measure is
absolutely continuous with respect to the Lebesgue measure. Under
rather weak assumptions on the smoothness of the coefficients of the
operator ${\mathscr A}$ the density of $\mu$ with respect to the
Lebesgue measure is locally H\"older continuous in $\R^N$. Global
properties of the invariant measure have been proved in
\cite{fornaro-fusco-metafune-pallara,metafune-pallara-rhandi}.

Since the characterization of the domain of the infinitesimal generator $A_p$ of the semigroup $(T(t))$ in $L^p(\R^N,\mu)$ is an
hard task in general, it turns out important to determine suitable space of smooth functions which are a core for $A_p$.
This problem has been addressed in \cite{albanese-mangino,albanese-mangino-2,albanese-mangino-lorenzi} where sufficient conditions
for $C^{\infty}_c(\R^N)$ to be a core of $A_p$ are given.

Whenever an invariant measure exists, for any $f\in L^p(\R^N,\mu)$
the function $T(t)f$ converges to the mean $\overline f$ of $f$ with
respect to $\mu$, in $L^p(\R^N,\mu)$ as $t\to +\infty$ for any $p\in
(1,+\infty)$. In particular, if the pointwise gradient estimate
\begin{eqnarray*}
|(\nabla T(t)f)(x)|^2\le Ce^{\omega t}(T(t)f^2)(x),\qquad\;\,t>1,\;\,x\in\R^N,
\end{eqnarray*}
holds true for any $f\in C_b(\R^N)$ and some constants $C>0$ and
$\omega<0$, then $T(t)f$ converges to $\overline f$ with exponential
rate.

In this paper we are going to survey the recent results in the case of nonautonomous elliptic operators with unbounded coefficients
starting from the pioneering paper \cite{daprato-lunardi-2}.

The paper is structured as follows. In Section \ref{sect-2} we
introduce the evolution operators $(P(t,s))$ associated to
nonautonomous elliptic operators
\begin{equation}
({\mathscr
A}\varphi)(s,x)=\sum_{i,j=1}^Nq_{ij}(s,x)D_{ij}\varphi(x)+\sum_{i,j=1}^Nb_j(s,x)D_j\varphi(x),\quad\;\,s\in
I,\;\,x\in\R^N, \label{op-nonauton}
\end{equation}
in $C_b(\R^N)$, where $I$ is a right halfline (possibly $I=\R$),
listing their main properties. Section \ref{sect-3} is devoted to
proving uniform estimates for the derivatives (up to the
third-order) of the function $P(t,s)f$ when $f$ belongs to spaces of
H\"older continuous functions. As a valuable consequence of such
estimates, we state an optimal regularity result in H\"older spaces
for the solution to \eqref{pb-nonhom} when ${\mathscr A}$ is a
nonautonomous operator. We then turn our attention to pointwise
gradient estimates which are extensively used in the forthcoming
sections. Section \ref{sect-4} is devoted to introducing the
nonautonomous counterpart of the concept of invariant measures: the
so called evolution systems of invariant measures, i.e., a family
$\{\mu_s: s\in\R\}$ of probability measures such that
\begin{eqnarray*}
\int_{\R^N}P(t,s)fd\mu_t=\int_{\R^N}fd\mu_s,\qquad\;\,s<t,\;\, f\in
C_b(\R^N).
\end{eqnarray*}
From Section \ref{sect-5} we confine ourselves to the case when
$I=\R$. In Section \ref{sect-5}, we introduce the evolution
semigroup $(T(t))$ associated with the evolution operator $(P(t,s))$
both in $C_b(\R^{1+N})$- and in $L^p$-spaces related to particular
Borel positive measures $\mu$ constructed starting from evolution
systems of measures. More specifically, $\mu$ is the unique Borel
measure which extends the function
\begin{eqnarray*}
(A,B)\mapsto \int_A\mu_s(B)ds,
\end{eqnarray*}
defined on Borel sets $A\subset\R$ and $B\subset\R^N$.

The semigroup $(T(t))$, defined in $C_b(\R^{1+N})$, extends to
$L^p(\R^{1+N},\mu)$ by a strongly continuous semigroup of
contractions. In the case of the nonautonomous Ornstein-Uhlenbeck
operator, and $\mu$ coming from the unique evolution
system of measures of Gaussian type, the domain of the infinitesimal
generator of the evolution semigroup $(T(t))$ is characterized in
Subsection \ref{subsect-5.1}. Section \ref{sect-6} is devoted to the
periodic case, i.e., to the case when the coefficients of the
nonautonomous operator ${\mathscr A}$ are $T$ periodic with respect
to $s$. Section \ref{sect-7} collects some results on the asymptotic
behaviour of the evolution operator $(P(t,s))$ in the $L^p$-spaces
related to evolution systems of measures. Finally, in Section
\ref{sect-8} we present some sufficient conditions, in the periodic
case, for the generator of $(T(t))$ be compactly embedded in
$L^p(\R^{1+N},\mu^{\sharp})$, where $\mu^{\sharp}$ is the
(probability) measure constructed starting from the unique $T$
periodic evolution systems of measures of $(P(t,s))$.

\subsection*{Notation}
Given an open set $\Omega\subset\R^N$ and a smooth function
$u:\Omega\to\R$,
%and a multi-index
%$\alpha=(\alpha_1,\ldots,\alpha_N)$, we denote by $D^{\alpha}u$ the
%derivative $\frac{\partial^{|\alpha|}u}{\partial
%x_1^{\alpha_1},\ldots\partial x_N^{\alpha_N}}$, where $|\alpha|$
%denotes the length of the vector $\alpha$.
we use the notation $D_iu$ and $D_{ij}u$ to denote the derivatives
$\frac{\partial u}{\partial x_i}$ and $\frac{\partial^2u}{\partial
x_ix_j}$, respectively.
If $u$ is a function of the variables $s$ and $x$, we denote by $D_su$ the derivative
$\frac{\partial u}{\partial s}$.

The subscript ``$b$'' means bounded. Hence, $C_b(\R^N)$ stands for
the set of all continuous functions $f:\R^N\to\R$ which are bounded.
We endow $C_b(\R^N)$ with the sup-norm. Similarly, for any $k>0$,
$C^k_b(\R^N)$ stands for the set of all functions in $C^k(\R^N)$
which are bounded and have bounded derivatives up to the $[k]$-th
order. It is endowed with the Euclidean norm
\begin{eqnarray*}
\|u\|_{C^k_b(\R^N)}=\sum_{|\alpha|\le
[k]}\|D^{\alpha}u\|_{C_b(\R^N)}+\sum_{|\alpha|=[k]} \sup_{x\neq
y}\frac{|D^{\alpha}u(x)-D^{\alpha}u(y)}{|x-y|^{k-[k]}}.
\end{eqnarray*}

The subscript ``c'' always means compactly supported. Hence, $C_c^2(\R^N)$
stands for the set of all the twice-continuously differentiable functions with compact
support in $\R^N$.

By $B_R$ we denote the open ball in ${\mathbb R}^N$ with centre at
the origin and radius $R$, and by $\overline B_R$ its closure. If
$A$ is a measurable set in $\R^N$, we denote by $\chi_A$ the
characteristic function of the set $A$. Finally, by $\langle
x,y\rangle$ we denote the Euclidean inner product of the vectors
$x,y\in\R^N$.

\section{The evolution operator in $C_b(\R^N)$}
\label{sect-2}
\setcounter{equation}{0}

In this section we assume the following assumptions on the
coefficients of the operator ${\mathscr A}$ in \eqref{op-nonauton}.
\begin{hyp}\label{hyp1}
~\par
\begin{enumerate}[\rm (i)]
\item
The coefficients $q_{ij}$ and $b_i$  belong to  $C^{\alpha/2,\alpha}_{\rm loc}(I\times\R^N)$ for any
$i,j=1,\ldots,N$ and some $\alpha\in (0,1)$;
\item
$Q$ is uniformly elliptic, i.e., for every $(s,x)\in I \times \R^N$, the matrix $Q(s,x)$ is symmetric and there
exists a function $\eta:I\times\R^N\to \R$ such that $0<\eta_0:=\inf_{I\times\R^N}\eta$ and
\[
\langle Q(s,x)\xi , \xi \rangle \geq \eta(s,x) |\xi |^2, \qquad\;\,
\xi \in \R^N,\;\,(s,x) \in I \times \R^N;
\]
\item
for every bounded interval $J \subset I$ there
exist a function $\varphi = \varphi_J \in C^2(\R^N)$ and a positive
number $\lambda = \lambda_J$ such that
\[
\lim_{|x| \to +\infty}\varphi (x) = +\infty \quad \mbox{and}\quad
({\mathscr A}\varphi) (s,x) - \lambda \varphi (x) \leq 0,\qquad\;\,(s,x) \in
J\times \R^N.
\]
\end{enumerate}
\end{hyp}

Under the previous set of assumptions one can prove the following
result.

\begin{theorem}[{\cite[Theorem 2.2]{kunze-lorenzi-lunardi}}]
\label{thm:2.2}
For any $s \in I$ and any $f \in C_b(\R^N)$, there
exists a unique solution $u$ of the Cauchy problem
\begin{equation}
\left\{
\begin{array}{lll}
D_tu(t,x)=({\mathscr A}u)(t,x), &t>s, & x\in\R^N,\\[2mm]
u(s,x)=f(x), &&x\in\R^N.
\end{array}
\right. \label{Cauchy-sect-2}
\end{equation}
Furthermore,
\begin{equation}
\|u(t,\cdot)\|_{\infty} \leq
\|f\|_{\infty},\qquad\;\, t \geq s.
\label{contractive}
\end{equation}
\end{theorem}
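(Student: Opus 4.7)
The plan is to construct a solution by approximation on bounded balls and to prove uniqueness via a maximum-principle comparison; in both steps the Lyapunov function of Hypothesis~\ref{hyp1}(iii) is the crucial ingredient that compensates for the unboundedness of the coefficients.

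For existence, I would fix $s\in I$ and $f\in C_b(\R^N)$, pick cut-offs $\vartheta_n\in C^\infty_c(B_n)$ with $0\le\vartheta_n\le 1$ and $\vartheta_n\equiv 1$ on $B_{n-1}$, and consider the Cauchy--Dirichlet problem
\begin{equation*}
D_tu_n=\mathscr Au_n\ \text{in}\ (s,+\infty)\times B_n,\qquad u_n(s,\cdot)=\vartheta_nf,\qquad u_n=0\ \text{on}\ (s,+\infty)\times\partial B_n.
\end{equation*}
On each fixed ball the coefficients of $\mathscr A$ are bounded and of class $C^{\alpha/2,\alpha}$, and the initial/boundary data are compatible at the corner, so classical parabolic Schauder theory yields a unique $u_n\in C^{1,2}((s,+\infty)\times B_n)\cap C([s,+\infty)\times\overline B_n)$ satisfying $\|u_n\|_\infty\le\|f\|_\infty$ by the parabolic maximum principle. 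Interior Schauder estimates, uniform on compact subsets of $(s,+\infty)\times\R^N$, together with Arzel\`a--Ascoli and a diagonal extraction, produce a subsequence converging in $C^{1,2}_{\mathrm{loc}}$ to a bounded function $u$ that solves $D_tu=\mathscr A u$ and still obeys \eqref{contractive}. Continuity of $u$ up to $t=s$ is the non-routine point and is handled by a standard barrier construction: for fixed $x_0\in\R^N$ and $\varepsilon>0$, and with $\varphi_J,\lambda_J$ chosen via Hypothesis~\ref{hyp1}(iii) on a bounded $J\subset I$ containing $s$, a super-solution of the form $f(x_0)+\varepsilon+C_\varepsilon e^{\lambda_J(t-s)}\varphi_J(x)$ dominates $u_n$ on the parabolic boundary of a cylinder $(s,s+\tau)\times B_n$ for $C_\varepsilon$ large enough (using the continuity of $f$ at $x_0$ together with $\varphi_J\to+\infty$ at spatial infinity). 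The maximum principle and a symmetric lower barrier then force $u(t,\cdot)\to f$ locally uniformly as $t\downarrow s$.

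The main obstacle is uniqueness, since bounded classical solutions on $\R^N$ can fail to be unique without a Lyapunov estimate. Let $v=u_1-u_2$ be the difference of two bounded solutions with the same datum, fix any bounded $J\subset I$ containing $s$, let $\varphi_J,\lambda_J$ be as in Hypothesis~\ref{hyp1}(iii), and assume without loss of generality $\varphi_J\ge 1$ (add a large constant to $\varphi_J$ and adjust $\lambda_J$ if necessary). For $\varepsilon>0$ set $z_\varepsilon(t,x):=v(t,x)-\varepsilon e^{\lambda_J(t-s)}\varphi_J(x)$. A direct computation together with Hypothesis~\ref{hyp1}(iii) gives
\begin{equation*}
(D_t-\mathscr A)z_\varepsilon=-\varepsilon e^{\lambda_J(t-s)}\bigl(\lambda_J\varphi_J-\mathscr A\varphi_J\bigr)\le 0,
\end{equation*}
while $z_\varepsilon(s,\cdot)\le-\varepsilon\varphi_J\le 0$ and $z_\varepsilon(t,x)\to-\infty$ uniformly in $t\in[s,\sup J]$ as $|x|\to+\infty$. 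Choosing $R>0$ so large that $z_\varepsilon\le 0$ on $[s,\sup J]\times\{|x|\ge R\}$ and applying the classical parabolic maximum principle on the bounded cylinder $(s,\sup J)\times B_R$ (whose parabolic boundary data are $\le 0$) yields $z_\varepsilon\le 0$ on the entire strip $[s,\sup J]\times\R^N$. Letting $\varepsilon\downarrow 0$ gives $v\le 0$; applying the same argument to $-v$ produces $v\equiv 0$ on $J\times\R^N$, and covering $I$ by an increasing sequence of bounded intervals completes the proof.
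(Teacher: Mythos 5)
Your uniqueness argument is correct and is essentially the paper's: it is the generalized maximum principle obtained by perturbing with the Lyapunov function of Hypothesis~\ref{hyp1}(iii) (the paper writes the perturbation as $u-n^{-1}\varphi$, you write it as $v-\varepsilon e^{\lambda_J(t-s)}\varphi_J$; both work), and the contraction estimate follows as you say. The existence scheme via Cauchy--Dirichlet problems on balls and interior Schauder estimates is also the paper's.

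The gap is in the step you yourself flag as the non-routine one: continuity of $u$ at $t=s$ for general $f\in C_b(\R^N)$. The barrier $w(t,x)=f(x_0)+\varepsilon+C_\varepsilon e^{\lambda_J(t-s)}\varphi_J(x)$ is indeed a supersolution and, for $C_\varepsilon$ large, dominates $u_n$ on the parabolic boundary of $(s,s+\tau)\times B_n$; but the comparison it yields at the point of interest is
\begin{equation*}
u(t,x_0)\le f(x_0)+\varepsilon+C_\varepsilon e^{\lambda_J(t-s)}\varphi_J(x_0),
\end{equation*}
and the right-hand side tends to $f(x_0)+\varepsilon+C_\varepsilon\varphi_J(x_0)$ as $t\downarrow s$, not to $f(x_0)+\varepsilon$: the term $C_\varepsilon\varphi_J(x_0)$ is a fixed positive quantity (and $C_\varepsilon$ must be \emph{large} for the boundary comparison, precisely because $\varphi_J$ need not be large where $f$ differs from $f(x_0)$). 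So the barrier as written proves nothing about $\limsup_{t\downarrow s}u(t,x_0)$. You cannot simply replace it by $f(x_0)+\varepsilon+A|x-x_0|^2+B(t-s)$ on all of $(s,s+\tau)\times B_n$ either, since $\mathscr{A}(|x-x_0|^2)$ is unbounded. A correct repair is a purely local barrier: on the small cylinder $(s,s+\tau)\times B_\delta(x_0)$ the coefficients are bounded, $\|u_n\|_\infty\le\|f\|_\infty$ controls the lateral boundary once $A\delta^2\ge 2\|f\|_\infty$, and $B$ large (depending only on $A$ and the local bounds on $Q,b$) makes $f(x_0)+\varepsilon+A|x-x_0|^2+B(t-s)$ a supersolution there; this gives $u_n(t,x_0)\le f(x_0)+\varepsilon+B(t-s)$ uniformly in $n$. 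The paper avoids barriers altogether: it first treats positive $f\in C^{2+\alpha}_c(\R^N)$, where Schauder estimates up to $t=s$ are available, then $f\in C_0(\R^N)$ by uniform approximation and the contraction estimate, and finally general $f\in C_b(\R^N)$ by the splitting $u_{f_n}=u_{\varphi f_n}+u_{(1-\varphi)f_n}$ together with the comparison $|u_{(1-\varphi)f_n}|\le (1-u_\varphi)M$. Either route works; yours, as written, does not.
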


\begin{proof}
Uniqueness and Estimate \eqref{contractive} follow from a
generalized maximum principle. Hypothesis \ref{hyp1}(iii) allows to
prove that, if $u\in C_b([a,b]\times\R^N)\cap
C^{1,2}((a,b]\times\R^N)$ satisfies the differential inequality
$D_tu-{\mathscr A}u\le 0$ in $(a,b]\times\R^N$ and $u(a,\cdot)\le
0$, then $u(t,x)\le 0$ for any $(t,x)\in [a,b]\times\R^N$. It
suffices to observe that $u$ is the pointwise limit of the sequence
of functions $v_n=u-n^{-1}\varphi$ which have a global maximum in
$[a,b]\times\R^N$, which should be non positive since $v_n$ is
non positive at $t=s$.

The existence part is obtained looking at the solution $u$ to
\eqref{Cauchy-sect-2} as the limit (in a suitable sense) of the
solutions to Cauchy-Dirichlet problems in balls.

First one considers the case when $f$ is positive and belongs to $C^{2+\alpha}_c(\R^N)$.
For any $n\in\N$, let $u_n$ be the classical solution to the Cauchy-Dirichlet problem
\begin{equation}
\left\{
\begin{array}{lll}
u_t(t,x)=({\mathscr A}u)(t,x), & t\in (s, +\infty ), &x\in B_n,\\[1mm]
u(t,x)=0, & t\in (s,+\infty), &x \in
\partial B_n,\\[1mm]
u(s,x)=f(x), && x \in B_n.
\end{array}\right.
\label{approx-dirichlet}
\end{equation}
If $n_0$ is such that ${\rm supp}(f)\subset B(n_0)$, then, for any
$n\ge n_0$, the unique classical solution to Problem
\eqref{approx-dirichlet} belongs to $C^{1+\alpha/2,2+\alpha}_{\rm
loc}([s,+\infty)\times \overline{B_n})$. Moreover, for any $m>n_0$,
there exists a constant $C=C(m)$ independent of $n$, such that
\begin{eqnarray*}
\|u_n\|_{C^{1+\alpha/2,2+\alpha}((s,m)\times B_m)} \leq C\|f\|_{C^{2+\alpha}_b(\R^N)},
\end{eqnarray*}
for any $n>m$. The sequence $(u_n(x))$ is increasing for any
$x\in\R^N$, by the classical maximum principle. Hence, the previous
estimate and a diagonal argument imply that $u_n$ converges in
$C^{1,2}((s,m)\times B(m))$, for any $m\in\N$, to some function $u\in
C^{1+\alpha/2,2+\alpha}_{\rm loc}([s,+\infty)\times\R^N)$. Clearly,
$u$ satisfies the differential equation in \eqref{Cauchy-sect-2}
since any function $u_n$ does in $(s,+\infty)\times B_n$. Moreover,
$u(s,\cdot)=f$ since $u_n(s,\cdot)=f$ for any $n\in\N$ and $u_n$
converges to $u$ locally uniformly in $[s,+\infty)\times\R^N$.

In the case when $f\in C_0(\R^N)$, one fixes a sequence
$(f_n)\subset C^{2+\alpha}_c(\R^N)$ converging to $f$ uniformly in
$\R^N$ as $n$ tends to $+\infty$. Denote by $u_{f_n}$ the solution
to \eqref{Cauchy-sect-2} with $f$ being replaced by $f_n$. Estimate \eqref{contractive} yields
\begin{eqnarray*}
\|u_{f_n}-u_{f_m}\|_{C_b([s,+\infty)\times\R^N)}\le \|f_n-f_m\|_{C_b(\R^N)},\qquad\;\,m,n\in\N.
\end{eqnarray*}
Therefore, $u_{f_n}$ converges to some function $u\in
C_b([s,+\infty)\times\R^N)$, uniformly in $[s,+\infty)\times\R^N$.
In particular, $u(s,\cdot)=f$. The classical interior Schauder
estimates applied to the sequence $(u_{f_n})$ show that $u_{f_n}$
actually converges in $C^{1,2}_{\rm loc}((s,+\infty)\times\R^N)$ to
$u$. Hence, $u$ is the bounded classical solution of Problem
\eqref{Cauchy-sect-2}.

The general case when $f\in C_b(\R^N)$ is a bit trickier to be
handled with. Let $(f_n)\in C^{2+\alpha}_c(\R^N)$ converge to $f$
locally uniformly in $\R^N$ as $n$ tends to $+\infty$. Again the
interior Schauder estimates show that, up to a subsequence,
$u_{f_n}$ converges in $C^{1,2}_{\rm loc}((s,+\infty)$
$\times\R^N)$ to some function $u\in C^{1+\alpha/2,2+\alpha}_{\rm
loc}((s,+\infty)\times\R^N)$, as $n$ tends to $+\infty$. In
particular, $u$ solves the differential equation in
\eqref{Cauchy-sect-2}.

To prove that $u$ is continuous up to $t=s$ and $u(s,\cdot)=f$, we employ a localization argument. We fix a compact set
$K\subset\R^N$ and a smooth and compactly supported function $\varphi$
such that $0\le\varphi\le 1$ and $\varphi\equiv 1$ in $K$. Further, we split $u_{f_n}=u_{\varphi f_n}+
u_{(1-\varphi)f_n}$, for any $n\in \R$.
Since the function $\varphi f$ is compactly supported in $\R^N$,
$u_{\varphi f_n}$ converges to $u_{\varphi f}$ uniformly in $[s,+\infty)\times\R^N$.

Let us now consider the sequence $(u_{(1-\varphi)f_n})$. Fix $m \in\N$. A comparison argument shows that
\begin{eqnarray*}
|(u_{(1-\varphi ) f_m})(t,x)| \leq (1 - u_{\varphi}(t,x))M,\qquad\;\,(t,x)\in (s,+\infty)\times\R^N,
\end{eqnarray*}
where $M=\sup_{n\in\N}\|f_n\|_{\infty}$. Since $u_{f_n}$ converges pointwise to $u$, for any $(t,x)\in (s,+\infty)\times\R^N$ we have
\[
|u(t,x)-f(x)| =\lim _{n\to +\infty} |u_{ f_n}(t,x)-f( x)|,\qquad\;\,(t,x)\in (s,+\infty)\times\R^N,
\]
and, for each $n\in \N$, we have
\begin{align*}
    |u_{ f_n}(t,x)-f(x)| &\le |u_{ \varphi f_n}(t,x) -f(x)| + |u_{(1-\varphi)f_n}(t,x)|\nonumber\\
&\le |u_{\varphi f_{n}}(t,x)-f(x)|+
(1 - u_{\varphi}(t,x))M.
\end{align*}
Letting $n\to +\infty$ gives
\begin{align*}
    |u(t,x)-f(x)|\le |u_{\varphi f}(t,x)-f(x)|+ (1 -
    u_{\varphi}(t,x))M,
\end{align*}
Hence, $u$ can be continuously extended  up to $t=s$ setting
$u(s,\cdot)=f$.

The general case when $f$ is not everywhere nonnegative then follows splitting $f=f^+-f^-$ where
$f^+=\max\{f,0\}$ and $f^{-}=\max\{-f,0\}$, and applying the above results to $f^+$ and $f^-$.
This completes the proof.
\end{proof}

The previous theorem allows to associate an evolution operator
$(P(t,s))$ with the operator ${\mathscr A}$. For any $f\in
C_b(\R^N)$, $P(t,s)f$ is the value at $t$ of the unique bounded
classical solution to Problem \eqref{Cauchy-sect-2}.

\begin{remark}
{\rm In the case of the nonautonomous Ornstein-Uhlenbeck operator
\begin{eqnarray*}
({\mathscr
A}_O\varphi)(s,x)=\sum_{i,j=1}^Nq_{ij}(s)D_{ij}\varphi(x)+
\sum_{i,j=1}^Nb_{ij}(s)x_jD_i\varphi(x),\qquad\;\,s\in\R,\;\,x\in\R^N,
\end{eqnarray*}
an explicit representation formula for the associated evolution
operator $(P_O(t,s))$ is known. More precisely, for any $f\in
C_b(\R^N)$ one has
\begin{equation}
(P_O(t,s)f)(x)=\frac{1}{(4\pi)^{N/2}({\rm
det}Q_{t,s})^{1/2}}\int_{\R^N}e^{-\frac{1}{4}\langle
Q_{t,s}^{-1}y,y\rangle}f(y+U(s,t)x)dy, \label{nonaut-OU}
\end{equation}
where $U(\cdot,s)$ is the solution of the problem
\begin{align}
\left\{
\begin{array}{ll}
D_tU(t,s)=-B(t)U(t,s),\;\, & t\in\R, \\[2mm]
U(s,s)={\rm Id},
\end{array}
\right.
\label{pb-per-U}
\end{align}
and $Q_{t,s}$ is the positive definite matrix defined by
\begin{eqnarray*}
Q_{t,s}=\int_s^tU(s,\xi)Q(\xi)U(s,\xi)^*d\xi,\qquad\;\,s,t\in\R,\;\,s<t.
\end{eqnarray*}

Note that $P_O(\cdot,s)f$ is the unique bounded classical solution to
Problem \eqref{Cauchy-sect-2}, just assuming that $q_{ij}$ and
$b_{ij}$ are in $C_b(\R)$ for any $i,j=1,\ldots,N$, namely, no
local H\"older regularity is required.}
\end{remark}

The evolution operator $(P(t,s))$ enjoys the following properties.

\begin{proposition}
[{\cite[Propositions 2.4 \& 3.1]{kunze-lorenzi-lunardi}}]
 \label{prop-luca} The following properties hold true.
\begin{enumerate}[\rm (i)]
\item
For any $(t,s)\in I\times I$ such that $t>s$ and any $x\in\R^N$, there exists a unique probability measure $p_{t,s}(x,dy)$ such that
\begin{equation}
(P(t,s)f)(x)=\int_{\R^N}f(y)p_{t,s}(x,dy).
\label{repres}
\end{equation}
\item
Each operator $P(t,s)$ can be extended to the set of all bounded
Borel functions through Formula \eqref{repres}. In particular, for
any Borel set $A$ with positive Lebesgue measure,
$(P(t,s)\chi_A)(x)>0$ for any $x\in\R^N$ and any $t>s$.
\item
Let $(f_n) \subset C_b(\R^N)$ be a bounded sequence and $f\in
C_b(\R^N)$. Then:
\begin{enumerate}[\rm (a)]
\item
if $f_n$ converges pointwise to $f$, then $P(\cdot , s )f_n$
converges to $P(\cdot , s )f$ locally uniformly in
$(s,+\infty)\times \R^N$;
\item
 if $f_n$ converges locally uniformly in $\R^N$ to $f$, then $P(\cdot,s)f_n$
converges to $P(\cdot , s )f$ locally uniformly in
$[s,+\infty)\times\R^N$.
\end{enumerate}
\end{enumerate}
\end{proposition}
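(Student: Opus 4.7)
My plan is to handle the three parts in the order (i), (iii), (ii), since the measure representation is the foundation, part (iii) gives us the convergence tools, and these together handle (ii) cleanly.

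For part (i), I would fix $(t,s,x)$ with $t>s$ and consider the linear functional $L(f):=(P(t,s)f)(x)$ on $C_b(\R^N)$. By Theorem \ref{thm:2.2} and the maximum principle contained in its proof, $L$ is positive (since $f\ge 0$ gives a nonnegative solution), bounded by \eqref{contractive}, and $L(1)=1$ (by uniqueness applied to the constant solution). Restricting $L$ to $C_0(\R^N)$ and applying the Riesz representation theorem yields a finite positive Borel measure $p_{t,s}(x,\cdot)$ with $L(f)=\int_{\R^N}f\,dp_{t,s}(x,\cdot)$ for every $f\in C_0(\R^N)$. To show $p_{t,s}(x,\R^N)=1$, I would pick cutoffs $\psi_n\in C_c(\R^N)$ with $0\le\psi_n\le 1$ and $\psi_n\nearrow 1$: monotone convergence gives $\int\psi_n\,dp_{t,s}(x,\cdot)\to p_{t,s}(x,\R^N)$, while the construction in Theorem \ref{thm:2.2} (the locally uniform approximation of $P(t,s)f$ for $f\in C_b$ by $P(t,s)f_n$ with $f_n\in C^{2+\alpha}_c$) applied to $f\equiv 1$ forces $P(t,s)\psi_n(x)\to 1$. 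To extend the representation to all $f\in C_b(\R^N)$, I would approximate by $\psi_n f\in C_0(\R^N)$, pass to the limit on the integral side by dominated convergence (bounded by $\|f\|_\infty$), and on the operator side using again the Theorem \ref{thm:2.2} construction.

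For part (iii)(a), I would first obtain pointwise convergence of $(P(t,s)f_n)(x)=\int f_n\,dp_{t,s}(x,\cdot)$ to $(P(t,s)f)(x)$ via dominated convergence (with dominating constant $\sup_n\|f_n\|_\infty<\infty$). To upgrade to local uniform convergence on $(s,+\infty)\times\R^N$, I would invoke the interior parabolic Schauder estimates: since the functions $u_n:=P(\cdot,s)f_n$ solve the same equation with uniformly bounded sup-norm and Hypothesis \ref{hyp1}(i) gives $C^{\alpha/2,\alpha}_{\rm loc}$ coefficients, $(u_n)$ is uniformly bounded in $C^{1+\alpha/2,2+\alpha}$ on any compact $K\subset (s,+\infty)\times\R^N$. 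Arzelà--Ascoli plus the already-known pointwise limit upgrades the convergence to $C^{1,2}_{\rm loc}$, hence locally uniformly.

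For (iii)(b), which is the most delicate part because of what happens at $t=s$, I would adapt the localization argument from the end of the proof of Theorem \ref{thm:2.2}. Fix a compact set $K\subset\R^N$ and a compact interval $[s,s+T]$, and choose $\varphi\in C_c(\R^N)$ with $0\le\varphi\le 1$ and $\varphi\equiv 1$ on a neighborhood of $K$. Write $f_n-f=\varphi(f_n-f)+(1-\varphi)(f_n-f)$. Since $f_n\to f$ locally uniformly and $\varphi$ has compact support, $\varphi(f_n-f)\to 0$ uniformly on $\R^N$; by \eqref{contractive}, $P(t,s)[\varphi(f_n-f)]\to 0$ uniformly on $[s,+\infty)\times\R^N$. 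For the remainder, the comparison used in the proof of Theorem \ref{thm:2.2} gives the bound
\begin{equation*}
\big|P(t,s)[(1-\varphi)(f_n-f)](x)\big|\le 2M\bigl(1-P(t,s)\varphi(x)\bigr),\qquad M:=\sup_n\|f_n\|_\infty,
\end{equation*}
valid on $[s,+\infty)\times\R^N$. On the compact set $K$ (where $\varphi\equiv 1$), the right-hand side equals $2M(\varphi(x)-P(t,s)\varphi(x))$, which tends to $0$ as $t\to s^+$ uniformly in $x\in K$ by the continuity of $P(\cdot,s)\varphi$ up to $t=s$ asserted by Theorem \ref{thm:2.2}. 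Choose $\delta>0$ so that this is small on $[s,s+\delta]\times K$; on $[s+\delta,s+T]\times K$ apply part (iii)(a). Combining the estimates concludes the proof.

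Finally, for part (ii), the extension to bounded Borel functions is definitional once \eqref{repres} is available. For the positivity statement, given a Borel set $A$ with positive Lebesgue measure I would pick a compact $K\subset A$ of positive measure and a nontrivial nonnegative $g\in C_c(\R^N)$ with $g\le\chi_K\le\chi_A$, so that $P(t,s)\chi_A(x)\ge P(t,s)g(x)$; then use the strong parabolic maximum principle on the approximating Cauchy--Dirichlet problems \eqref{approx-dirichlet} (with initial datum $g$) to conclude that $u_n(t,x)>0$ for $t>s$ and $x\in B_n$, and hence $P(t,s)g(x)>0$ in the limit. The main obstacle in the overall plan is the uniform control near the initial time in (iii)(b); every other step is either a direct application of Theorem \ref{thm:2.2}, the representation \eqref{repres}, or standard parabolic regularity.
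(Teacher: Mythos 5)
Your treatment of parts (i) and (iii) is sound and essentially the expected route: the survey itself does not reprove this proposition (it quotes \cite[Propositions 2.4 \& 3.1]{kunze-lorenzi-lunardi}), but your use of Riesz representation for (i), of dominated convergence plus interior Schauder estimates and Arzel\`a--Ascoli for (iii)(a), and of the localization/comparison argument borrowed from the proof of Theorem \ref{thm:2.2} for (iii)(b) all work; in particular the bound $|P(t,s)[(1-\varphi)(f_n-f)]|\le 2M(1-P(t,s)\varphi)$ together with the continuity of $P(\cdot,s)\varphi$ up to $t=s$ correctly handles the delicate region near the initial time. The only caveat there is that the convergences $P(t,s)\psi_n(x)\to 1$ and $P(t,s)(\psi_n f)(x)\to (P(t,s)f)(x)$ used inside (i) must be justified directly from the construction in the proof of Theorem \ref{thm:2.2} (subsequence compactness plus uniqueness of the bounded classical solution), as you indicate, and not from part (iii), which you prove afterwards.

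Part (ii), however, contains a genuine gap. Given a Borel set $A$ of positive Lebesgue measure, you propose to pick a compact $K\subset A$ of positive measure and a nontrivial nonnegative $g\in C_c(\R^N)$ with $g\le\chi_K$. Such a $g$ need not exist: the open set $\{g>0\}$ is contained in $K$, so if $K$ has empty interior (e.g.\ a fat Cantor set, and $A$ itself may be such a set, in which case every compact subset of positive measure also has empty interior) the only admissible $g$ is $g\equiv 0$, and the minorization $P(t,s)\chi_A\ge P(t,s)g$ is vacuous. The standard repair is to prove strict positivity at the level of the kernels rather than by continuous minorants: comparing with the approximating Cauchy--Dirichlet problems \eqref{approx-dirichlet}, one has $(P(t,s)\chi_A)(x)\ge\int_{A\cap B_n}g_n(t,s,x,y)\,dy$, where $g_n$ is the Green function of the problem in $B_n$, which is strictly positive for $t>s$ and $x,y\in B_n$ by the strong maximum principle (or the parabolic Harnack inequality); since $|A\cap B_n|>0$ for $n$ large, the right-hand side is positive. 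Equivalently, one shows that $p_{t,s}(x,dy)$ has an almost everywhere positive density with respect to Lebesgue measure. Without some such input, the irreducibility claim in (ii) does not follow from your argument.
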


\begin{remark}
{\rm In general, $P(t,s)$ does not transform the local uniform
convergence of $f_n$ to $f$ in uniform convergence of $P(t,s)f_n$ to
$P(t,s)f$ as $n\to +\infty$. Consider for instance the
one dimensional Ornstein-Uhlenbeck operator
\begin{eqnarray*}
({\mathscr
A}_O\varphi)(x)=\varphi''(x)+bx\varphi'(x),\qquad\;\,x\in\R.
\end{eqnarray*}
In this case $P_O(t,s)f$ is given by \eqref{nonaut-OU} with
$U(t,s)=e^{-(t-s)b}$ and
\begin{eqnarray*}
Q_{t,s}=\frac{e^{2b(t-s)}-1}{2b},\qquad\;\,t,s\in\R.
\end{eqnarray*}
Let $f_n(x)=e^{i n^{-1}x}$ for any $x\in\R$ and any $n\in\N$. $f_n$
converges to $1$ locally uniformly in $\R^N$ as $n\to +\infty$. A
straightforward computation shows that
\begin{eqnarray*}
(P_O(t,s)f_n)(x)=\exp\left (-Q_{t,s}n^{-2}\right
)e^{(i/n)e^{(t-s)b}x},\qquad\;\,x\in\R,
\end{eqnarray*}
which clearly does not converge uniformly in $\R^N$ to $P_O(t,s)1=1$
as $n\to +\infty$.}
\end{remark}

One important issue is the continuity of $P(t,s)$ with respect to
the variable $s$. Under Hypothesis \ref{hyp1} the function $P(t,s)f$
turns out to be continuously differentiable with respect to $s$ in
$I\cap (-\infty,t]$ for any $f\in C^2(\R^N)$ constant outside a
compact set, and
\begin{eqnarray*}
\frac{d}{ds}P(t,s)f=-P(t,s){\mathscr A}f,\qquad\;\,t>s.
\end{eqnarray*}

In the case when $f$ is just bounded and continuous, the continuity
of the function $s\mapsto P(t,s)f$ can be proved if we replace
Hypothesis \ref{hyp1}(iii) with the following stronger condition.
\begin{hyp}\label{hyp5}
For every bounded interval $J \subset I$ there
exist a function $\varphi = \varphi_J \in C^2(\R^N)$
diverging to $+\infty$ as $|x|$ tends to $+\infty$, and a positive constant $M_J$
such that
\[
({\mathscr A}\varphi)(s,x)\le M_J,\qquad s\in J,\;\,x\in\R^N.
\]
\end{hyp}

Under this additional assumption, one can prove the following
result, which improves Property (iii) of Proposition
\ref{prop-luca}.

\begin{proposition}[{\cite[Proposition 3.6]{kunze-lorenzi-lunardi}}]
\label{prop-2.5} Let $(f_n)$ be a bounded sequence in $C_b(\R^N)$,
such that $\|f_n\|_{\infty} \leq M$ for each $n\in \N$ and $f_n$
converges to $f\in C_b(\R^N)$ locally uniformly in $\R^N$. Then, the
function $P(\cdot,\cdot)f_n$ converges to $P(\cdot,\cdot)f$ locally
uniformly in $\Lambda\times\R^N$, where $\Lambda=\{(t,s)\in I\times
I: s\le t\}$.
\end{proposition}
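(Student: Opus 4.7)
The plan is to convert uniformity in the variable $s$ into a tightness estimate on the transition kernels $p_{t,s}(x,dy)$ from Proposition \ref{prop-luca}(i), using Hypothesis \ref{hyp5} to produce that tightness. Fix a compact subset of $\Lambda\times\R^N$, which we may enclose in $K_1\times\overline{B_\rho}$ with $K_1\subset\{(t,s)\in J\times J:s\le t\}$ for some bounded interval $J\subset I$ and some $\rho>0$. Let $\varphi=\varphi_J\in C^2(\R^N)$ be the Lyapunov function of Hypothesis \ref{hyp5}, which we may assume nonnegative after adding a constant (this affects neither $\mathscr A\varphi$ nor the divergence at infinity).

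The key step is the integrated Lyapunov bound
\[
\int_{\R^N}\varphi(y)\,p_{t,s}(x,dy)\le \varphi(x)+M_J(t-s),\qquad (t,s)\in J\times J,\ s\le t,\ x\in\R^N.
\]
I would establish it by revisiting the Cauchy-Dirichlet approximation of Theorem \ref{thm:2.2}. For each $k,n\in\N$, let $v_n^{(k)}$ solve the analogue of \eqref{approx-dirichlet} on $[s,\sup J]\times\overline{B_n}$ with a nonnegative $C^{2+\alpha}_c(B_n)$ truncation $\varphi_n^{(k)}\le\varphi\wedge k$ in place of $f$, and consider $w(t,x):=\varphi(x)+M_J(t-s)$. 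Hypothesis \ref{hyp5} gives $(D_t-\mathscr A)w=M_J-\mathscr A\varphi\ge 0$ on $J\times\R^N$, while $w\ge 0=v_n^{(k)}$ on $\partial B_n\times[s,\sup J]$ and $w(s,\cdot)=\varphi\ge v_n^{(k)}(s,\cdot)$ on $\overline{B_n}$. The parabolic maximum principle on bounded cylinders yields $v_n^{(k)}\le w$. Letting first $\varphi_n^{(k)}\uparrow(\varphi\wedge k)\chi_{B_n}$, then $n\to+\infty$ along the monotone scheme of Theorem \ref{thm:2.2} gives $(P(t,s)(\varphi\wedge k))(x)\le w(t,x)$; letting $k\to+\infty$ and using the representation \eqref{repres} together with monotone convergence delivers the claimed bound. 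Chebyshev's inequality then produces
\[
\sup_{(t,s)\in K_1,\,x\in\overline{B_\rho}} p_{t,s}(x,\R^N\setminus \overline{B_R})\le \frac{\sup_{\overline{B_\rho}}\varphi+M_J|J|}{\inf_{|y|\ge R}\varphi(y)}\xrightarrow[R\to+\infty]{}0.
\]

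The rest is routine. Given $\varepsilon>0$, choose $R>0$ so that the last supremum is below $\varepsilon/(4M+1)$, and use $f_n\to f$ uniformly on $\overline{B_R}$ to fix $n_0$ with $\sup_{\overline{B_R}}|f_n-f|<\varepsilon/2$ for $n\ge n_0$. Splitting $(P(t,s)(f_n-f))(x)=\int_{\R^N}(f_n-f)\,dp_{t,s}(x,\cdot)$ over $\overline{B_R}$ and its complement and bounding the outer part by $2M\,p_{t,s}(x,\R^N\setminus\overline{B_R})$ yields $|(P(t,s)f_n)(x)-(P(t,s)f)(x)|<\varepsilon$ uniformly on $K_1\times\overline{B_\rho}$.

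The main obstacle is the rigorous proof of the integrated Lyapunov bound: since $\varphi$ is unbounded, $P(t,s)\varphi$ is not a priori defined from Theorem \ref{thm:2.2}, and Hypothesis \ref{hyp5} is only a pointwise differential inequality. One must exhaust by Cauchy-Dirichlet problems on balls, verify the supersolution and boundary/initial comparisons on each truncated problem, and pass to the limit twice (in the ball radius $n$ and in the truncation height $k$) via monotone convergence both for the approximate solutions and for the representing kernels. Once this is in place, the tightness step and the inner/outer splitting are entirely standard.
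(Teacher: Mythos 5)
The survey does not reproduce a proof of this proposition: it is stated with a citation to \cite[Proposition 3.6]{kunze-lorenzi-lunardi}, so there is no in-paper argument to compare against. Your proof is correct and is essentially the argument of the cited reference: the integrated Lyapunov bound $\int_{\R^N}\varphi\,dp_{t,s}(x,\cdot)\le\varphi(x)+M_J(t-s)$, obtained from Hypothesis \ref{hyp5} by comparison with the supersolution $\varphi(x)+M_J(t-s)$ in the Cauchy--Dirichlet approximations and a double monotone passage to the limit, yields tightness of the kernels uniformly on compact subsets of $\Lambda\times\R^N$, after which the inner/outer splitting over $\overline{B_R}$ and its complement gives the locally uniform convergence (the diagonal $t=s$ being covered trivially since $P(s,s)f=f$).
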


Clearly, the previous proposition implies the continuity of the
function $(t,s,x)\mapsto (P(t,s)f)(x)$ in $\Lambda\times\R^N$, since
this function is continuous when $f\in C^2_c(\R^N)$, and any $f\in
C_b(\R^N)$ is the local uniform limit of a sequence of functions in
$C^2_c(\R^N)$, which is bounded in the sup-norm.

\section{Uniform and gradient estimates and optimal Schauder estimates}
\label{sect-3} \setcounter{equation}{0} Theorem \ref{thm:2.2} shows
that the function $P(t,s)f$ is twice continuously differentiable
with respect to the spatial variables in $(s,+\infty)\times\R^N$ but
provides us with no information about the boundedness of such
derivatives. For the analysis of the long time behaviour of the
function $P(t,s)f$ and of the nonhomomogeneous Cauchy problem
associated with the operator ${\mathscr A}$, uniform and pointwise
estimates for the spatial derivatives of the function $P(t,s)f$,
when $f\in C_b(\R^N)$, are of particular interest. As in the
autonomous case, they can be proved under stronger assumptions on
the coefficients of the operator ${\mathscr A}$ than Hypothesis
\ref{hyp1}.

\subsection{Uniform estimates}

Uniform gradient estimates can be proved under some algebraic
conditions on the drift coefficients $b_j$ and some growth
conditions on the diffusion coefficients $q_{ij}$
($i,j=1,\ldots,N$). More precisely, assume the following additional
condition of the coefficients of the operator ${\mathscr A}$.
\begin{hyp}\label{hyp2}
~
\par
\begin{enumerate}[\rm (i)]
\item
The coefficients $q_{ij}$ and $b_i$ ($i,j=1,\ldots,N$) and their first-order
spatial derivatives belong to $C^{\alpha/2,\alpha}_{\rm
loc}(I\times\R^N)$;
\item
there exists a locally upperly bounded function $r: I\times\R^N
\to\R$ such that
\[
\langle \nabla_x b(s,x) \xi, \xi \rangle \leq r(s,x) |\xi
|^2,\qquad\;\,\xi\in\R^N,\;\,(s,x)\in I\times\R^N;
\]
\item
there exists a locally bounded function  $\zeta : I \to [0,
+\infty)$ such that, for every $i,j,k \in \{1, \ldots , N\}$, we
have
\begin{align*}
|D_kq_{ij}(s,x)| \leq\zeta (s) \eta (s,x),\qquad\;\,(s,x)\in
I\times\R^N.
\end{align*}
\end{enumerate}
\end{hyp}

Under Hypotheses \ref{hyp1} and \ref{hyp2} the following result holds true.

\begin{theorem}[{\cite[Theorem 4.1]{kunze-lorenzi-lunardi}}]
Let $s\in I$ and
$T > s$. Then, there exist positive constants $C_1,C_2$, depending on
$s$ and $T$, such that:
\begin{enumerate}[\rm (i)]
\item
for every $f \in C^1_b(\R^N)$ we have
\begin{equation}
\|\nabla P(t,s)f\|_{\infty} \leq C_1\|f\|_{C^1_b(\R^N)}, \qquad\;\,
s<t\leq T; \label{gradientC1-C1}
\end{equation}
for every $f \in C_b(\R^N)$ we have
\begin{equation}
\|\nabla P(t,s)f\|_{\infty} \leq
\frac{C_2}{\sqrt{t-s}}\|f\|_{\infty}, \qquad\;\, s< t \leq
T.
\label{gradientC0-C1}
\end{equation}
\end{enumerate}
\end{theorem}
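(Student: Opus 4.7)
\emph{Plan.} The plan is to apply the classical Bernstein method to the Cauchy-Dirichlet approximants $u_n$ on balls $B_n$ introduced in the proof of Theorem \ref{thm:2.2}, establish gradient estimates that are uniform in $n$, and then pass to the limit via the convergence $u_n\to P(\cdot,s)f$ in $C^{1,2}_{\rm loc}$. By density, combined with Proposition \ref{prop-2.5}, it suffices to treat the case $f\in C^{2+\alpha}_c(\R^N)$, so that the $u_n$ lie in $C^{1+\alpha/2,2+\alpha}$ up to the parabolic boundary and the third-order spatial derivatives needed below are legitimate.

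Setting $v_n:=|\nabla u_n|^2$, a direct computation exploiting $D_tu_n=\mathscr{A}u_n$ yields the Bernstein identity
\begin{align*}
D_tv_n-\mathscr{A}v_n=-2\sum_{i,j,k}q_{ij}\,D_{ik}u_n\,D_{jk}u_n+2\,\langle(\nabla_xb)\nabla u_n,\nabla u_n\rangle+2\sum_{i,j,k}(D_kq_{ij})\,D_{ij}u_n\,D_ku_n,
\end{align*}
the third-order derivatives of $u_n$ cancelling pairwise. Using the ellipticity bound $\sum q_{ij}D_{ik}u_n\,D_{jk}u_n\geq\eta(s,x)|D^2u_n|^2$, Hypothesis \ref{hyp2}(ii) to estimate the drift term by $2r(s,x)v_n$, and Hypothesis \ref{hyp2}(iii) together with Young's inequality to absorb the mixed-derivative term into the negative Hessian contribution at the cost of $C\zeta(s)^2\eta(s,x)\,v_n$, one obtains an inequality of the form
\begin{align*}
D_tv_n-\mathscr{A}v_n\leq-\eta(s,x)|D^2u_n|^2+\kappa(s,x)\,v_n,
\end{align*}
with $\kappa$ locally bounded on $I\times\R^N$.

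For estimate (ii) I would consider the auxiliary function $w_n(t,x):=(t-s)v_n(t,x)+\lambda u_n(t,x)^2$, with $\lambda>0$ to be chosen. Using $\mathscr{A}(u_n^2)=2u_n\mathscr{A}u_n+2\langle Q\nabla u_n,\nabla u_n\rangle$, the identity
\begin{align*}
D_tw_n-\mathscr{A}w_n=v_n+(t-s)(D_tv_n-\mathscr{A}v_n)-2\lambda\,\langle Q\nabla u_n,\nabla u_n\rangle,
\end{align*}
combined with the preceding bound, shows that for $\lambda$ large enough (in terms of $T-s$ and the local behaviour of $\kappa$ and $\eta$) the right-hand side becomes non-positive. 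The parabolic maximum principle on $[s,T]\times B_n$, after tempering $v_n$ with a smooth spatial cutoff $\psi_n$ vanishing on $\partial B_n$ (to neutralize the lack of control of $|\nabla u_n|$ at the boundary), yields $w_n\leq\lambda\|f\|_\infty^2$ on compact subsets of $[s,T]\times\R^N$ with constant independent of $n$; letting $n\to+\infty$ gives (ii). Estimate (i) follows analogously upon replacing the weight $(t-s)$ by $1$: the function $\widetilde w_n:=v_n+\lambda u_n^2$ satisfies $D_t\widetilde w_n-\mathscr{A}\widetilde w_n\leq\gamma(s,T)\widetilde w_n$ for a suitable $\gamma$, and Gronwall's inequality, combined with the initial bound $\widetilde w_n(s,\cdot)\leq\|f\|_{C^1_b(\R^N)}^2+\lambda\|f\|_\infty^2$, delivers the claim.

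I expect the main technical obstacle to be the interplay between the spatial cutoff $\psi_n$ and the merely local nature of the bound on $\kappa$: the Bernstein estimate has to be carried out so that the error terms generated by $\nabla\psi_n$ and $\mathscr{A}\psi_n$ are absorbed uniformly in $n$ by the negative contribution $-2\lambda\langle Q\nabla u_n,\nabla u_n\rangle$. Crucially, Hypothesis \ref{hyp2}(iii) is designed so that the dangerous $(D_kq_{ij})$-term scales with $\eta$ and is therefore beaten by the same $\eta$-weighted negative Hessian, which is also what makes the final constants depend on $s$ and $T$ only, as stated.
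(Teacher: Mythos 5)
Your overall strategy --- Bernstein's method applied to the Cauchy--Dirichlet approximants $u_n$ on $B_n$, the auxiliary function $\lambda u_n^2+(t-s)|\nabla u_n|^2$, the parabolic maximum principle, and passage to the limit via the $C^{1,2}_{\rm loc}$ convergence of $u_n$ --- is exactly the template the paper uses (it sketches it in detail for the higher-order estimates \eqref{stimasem}), and your interior computation is correct: the Bernstein identity, the ellipticity bound on the Hessian term, Hypothesis \ref{hyp2}(ii) for the drift term, and Hypothesis \ref{hyp2}(iii) plus Young's inequality to absorb the $D_kq_{ij}$-term into $-\eta|D^2u_n|^2$ are precisely the right ingredients, and they produce the constant that matches \eqref{ellp}.

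The genuine gap sits exactly where you park it as ``the main technical obstacle'': the localization. If $\psi_n$ is scaled so as to vanish on $\partial B_n$ (so $|\nabla\psi_n|\lesssim n^{-1}$, $|D^2\psi_n|\lesssim n^{-2}$ on the annulus $\{n/2\le|x|\le n\}$), the commutator of $\mathscr{A}$ with the weight $\psi_n^2$ produces the terms $-(t-s)|\nabla u_n|^2\,\mathscr{A}(\psi_n^2)$ and $-2(t-s)\langle Q\nabla(\psi_n^2),\nabla|\nabla u_n|^2\rangle$, whose coefficients involve ${\rm Tr}\,Q/n^2$, $\|Q\|/n$ and $|b|/n$ on that annulus. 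Hypotheses \ref{hyp1}--\ref{hyp2} put \emph{no} growth restriction on $Q$ and $b$ themselves (only a one-sided bound on $\nabla_xb$ and a bound on $\nabla_xQ$ relative to $\eta$, which still allows e.g.\ $b$ of cubic growth and $\|Q\|$ of exponential growth), so these factors are not bounded uniformly in $n$ and cannot be beaten by $-2\lambda\langle Q\nabla u_n,\nabla u_n\rangle\le -2\lambda\eta_0|\nabla u_n|^2$, however $\lambda$ is chosen. It is no accident that the paper's own cutoff-based Bernstein argument for \eqref{stimasem} is run under the strictly stronger Hypothesis \ref{ipos-1}(ii), namely $|Q(s,x)x|+{\rm Tr}(Q(s,x))\le C_1(s)(1+|x|^2)\eta(s,x)$ and $\langle b(s,x),x\rangle\le C_1(1+|x|^2)\eta(s,x)$: those conditions exist precisely to make the $\nabla\psi_n$- and $\mathscr{A}\psi_n$-terms absorbable. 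As written, your argument therefore proves \eqref{gradientC0-C1} only under those additional growth conditions; under Hypotheses \ref{hyp1}--\ref{hyp2} alone the lateral boundary of $B_n$ has to be handled by a different device (partial help comes from choosing $\psi_n$ radial and nonincreasing, which gives the drift contribution $-|\nabla u_n|^2\,b\cdot\nabla(\psi_n^2)$ a controllable sign via the one-sided bound on $\langle b(s,x),x\rangle$, but this does not fix the second-order and cross terms involving $Q$).
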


The previous estimates show that the spatial gradient of $P(t,s)f$
satisfies estimates similar to those holding in the case when
$P(t,s)$ is associated with an elliptic operator with smooth and
bounded coefficients.

\begin{remark}
{\rm In the case when the functions $r$ and $\xi$ in Hypothesis
\ref{hyp2} are upperly bounded in $I\times\R^N$, the constants $C_1$
and $C_2$ are independent of $s\in I$. Therefore, Estimates \eqref{gradientC1-C1}
and \eqref{gradientC0-C1} can be extended to any $t>s$.
Indeed, if $t-s>T$, we split
$P(t,s)f=P(t,t-T)P(t-T,s)f$ and estimate
\begin{eqnarray*}
\|\nabla P(t,s)f\|_{\infty}\le \frac{C_2}{\sqrt{T}}\|P(t-T,s)f\|_{\infty}\le \frac{C_2}{\sqrt{T}}\|f\|_{\infty},
\end{eqnarray*}
since $P(t-T,s)$ is a contraction.
Hence,
\begin{eqnarray*}
\|\nabla P(t,s)f\|_{\infty}\le C_3\max\{1,(t-s)^{-\frac{1}{2}}\}\|f\|_{\infty},\qquad\;\,t>s\in I,\;\,f\in C_b(\R^N),
\end{eqnarray*}
and
\begin{eqnarray*}
\|\nabla P(t,s)f\|_{\infty}\le C_4\|f\|_{C^1_b(\R^N)},\qquad\;\,t>s\in I,\;\,f\in C_b^1(\R^N),
\end{eqnarray*}
for some positive constants $C_3$ and $C_4$, independent of $s$ and $t$.}
\end{remark}

Uniform estimates for second- and third-order spatial derivatives of
the function $P(t,s)f$ can be proved under stronger assumptions.
More precisely, assume that
\begin{hyp}
\label{ipos-1} ~
\par
\noindent
\begin{enumerate}[\rm (i)]
\item
the coefficients $q_{ij},b_j$ $(i,j=1,\ldots,N$) are thrice
continuously differentiable with respect to the spatial variables in
$I\times\R^N$ and they belong to
$C^{\delta/2,\delta}(J\times B_R)$ for some $\delta\in (0,1)$, any $J\subset I$
and any $R>0$, together with their first-, second- and third-order
spatial derivatives;
\item
there exist locally bounded positive functions $C_1,C_2:I\to\R$ such
that
\begin{align*}
&|Q(s,x)x|+{\rm Tr}(Q(s,x))\le C_1(s)(1+|x|^2)\eta(s,x),\\[1mm]
&\langle b(s,x),x\rangle\le C_1(1+|x|^2)\eta(s,x),
\end{align*}
for any $s\in I$ and any $x\in\R^N$;
\item
there exist three locally bounded functions $K_1,K_2,K_3:I\to\R_+$ such
that
\begin{align*}
&|D^{\beta}q_{ij}(s,x)|\le K_{|\beta|}(s)\eta(s,x),\\
&\sum_{h,k,l,m=1}^ND_{lm}q_{hk}(s,x)\xi_{hk}\xi_{lm}\le
K_2(s)\eta(s,x)\sum_{h,k=1}^N \xi_{hk}^2,
\end{align*}
for any $i,j=1,\ldots,N$, any $|\beta|=1,3$, any $N\times N$
symmetric matrix $\Xi=(\xi_{hk})$ and any $(s,x)\in I\times\R^N$;
\item
there exist two functions $d,r:I\times\R^N\to\R$ and locally
bounded functions $L_1,L_2:I\to\R$ such that
\begin{align*}
&\langle \nabla_xb(s,x)\xi,\xi\rangle\le r(s,x)|\xi|^2,\\
&|D^\beta b_j (s,x)|\le d(s,x),\\
&r(s,x)+L_1(s)d(s,x)\le L_2(s)\eta(s,x),
\end{align*}
for any $s\in I$, any $|\beta|=2,3$, any $j=1,\ldots,N$ and any
$x,\xi\in\R^N$.
\end{enumerate}
\end{hyp}

Under this set of assumptions, in \cite[Theorem 2.4]{lorenzi-DCDS}
it has been proved that for any $h,k=0,1,2,3$, with $h\le k$ and any
$T>0$ there exists a positive constant $C=C(s,h,k,T)$ such that
\begin{equation}
\|P(t,s)f\|_{C^k_b(\R^N)}\le C(t-s)^{-\frac{k-h}{2}}\|f\|_{C^h_b(\R^N)},
\qquad\;\,f\in C_b^h(\R^N),
\label{stimasem}
\end{equation}
for any $t\in (s,s+T]$.

The proof follows the same lines as in the autonomous case and is based on the Bernstein method (see \cite{B0}).
We sketch the main ideas in the case when $h=0$ and $k=3$.
For any $n\in\N$, let $\vartheta_n:\R^N\to\R$ be the radial function defined by
$\vartheta_n(x)=\psi(|x|/n)$ for any $x\in\R^N$, where $\psi$ is a smooth
nonincreasing function such that $\chi_{[0,1/2]}\le
\psi\le\chi_{[0,1]}$. We fix $s\in I$ and define the
function
\begin{align*}
v_n(t,x)=&|u_n(t,x)|^2+a(t-s)\vartheta_n^2(x)|\nabla_xu_n(t,x)|^2+a^2(t-s)^2\vartheta_n^4(x)|D^2_xu_n(t,x)|^2\\
&+a^3(t-s)^3\vartheta_n^6(x)|D^3_xu_n(t,x)|^2,
\end{align*}
for any $t\in (s,T]$ and any $x\in B_n$, where $u_n$ is the
(unique) classical solution of the Dirichlet Cauchy problem
\eqref{approx-dirichlet} with $f$ being replaced by
$\vartheta_nf$. The positive parameter $a$ will be fixed later on.

Function $v_n$ converges pointwisely as $n\to +\infty$ to the function $v$ defined by
\begin{align*}
v(t,x)=&|(P(t,s)f)(x)|^2+a(t-s)|(\nabla_xP(t,s)f)(x)|^2+a^2(t-s)^2|(D^2_xP(t,s)f)(x)|^2\\
&+a^3(t-s)^3|(D^3_xP(t,s)f)(x)|^2,
\end{align*}
for any $(t,x)\in (s,+\infty)\times\R^N$, as $n\to +\infty$.

To prove Estimates \eqref{stimasem} it suffices to show that the
constant $a$ can be fixed, independently of $n$ such that $v_n\le
\|f\|_{\infty}$ in $[s,s+T]\times B_n$ for any $n\in\N$. This
property is obtained employing the classical maximum principle. The
function $v_n$ is smooth in $(s,+\infty)\times B_n$ and it vanishes
on $(s,+\infty)\times\partial B_n$ since $u_n$ and $\vartheta_n$ do.
(This is the reason why the function $\vartheta_n$ is introduced in the
definition of $v_n$.) Moreover, $v_n$ can be extended by continuity
up to $t=s$ setting $v_n(s,\cdot)=|\vartheta_n f|^2$. Using Hypothesis
\ref{ipos-1} one can show that the constant $a$ can be fixed
(independently of $n$) such that $D_tv_n-{\mathscr A}v_n\le 0$ in
$(s,s+T]\times B_n$. The classical maximum principle then yields
$v_n\le \|\vartheta_nf\|_{\infty}\le \|f\|_{\infty}$ as desired.

To prove \eqref{stimasem} with $h=1,2$ and $k=3$, it suffices to
apply the above argument to the functions
\begin{align*}
v_n(t,x)=&|u_n(t,x)|^2+a\vartheta_n^2(x)|\nabla_xu_n(t,x)|^2+a^2(t-s)\vartheta_n^4(x)|D^2_xu_n(t,x)|^2\\
&+a^3(t-s)^2\vartheta_n^6(x)|D^3_xu_n(t,x)|^2
\end{align*}
and
\begin{align*}
v_n(t,x)=&|u_n(t,x)|^2+a\vartheta_n^2(x)|\nabla_xu_n(t,x)|^2+a^2\vartheta_n^4(x)|D^2_xu_n(t,x)|^2\\
&+a^3(t-s)\vartheta_n^6(x)|D^3_xu_n(t,x)|^2,
\end{align*}
respectively.

\begin{remark}
{\rm As for the gradient estimates,
if the functions $C_i$, $L_i$ ($i=1,2$), $K_j$ ($j=1,2,3$), $d$  and $r$ are globally upperly bounded in $I$ and
$I\times\R^N$, respectively, then
Estimates \eqref{stimasem} can be extended to any $t>s$, up to replacing
$C(t-s)^{-\frac{k-h}{2}}$ with $\tilde C\max\{(t-s)^{-\frac{k-h}{2}},1\}$, for some constant $\tilde C$, independent of $s$ and $t$.}
\end{remark}

\subsection{Optimal Schauder estimates}

Estimates \eqref{stimasem} are the keystone to prove optimal
regularity results for the nonhomogeneous Cauchy problem associated
with the operator ${\mathscr A}$. The following result holds true.

\begin{theorem}
\label{thm-schauder} Fix $[a,b]\subset I$, $\theta\in (0,1)$, $g\in
C^{0,\theta}([a,b]\times\R^N)$ and $f\in C^{2+\theta}_b(\R^N)$.
Then, the Cauchy problem
\begin{equation}
\left\{
\begin{array}{lll}
D_tu(t,x)=(\mathscr{A}u)(t,x)+g(t,x),\quad &t\in [a,b], &x\in\R^N,\\[1.5mm]
u(a,x)=f(x),&&x\in\R^N,
\end{array}
\right.
\label{non-aut-optimal}
\end{equation}
admits a unique bounded classical solution. Moreover, $u(t,\cdot)\in
C^{2+\theta}_b(\R^N)$ for any $t\in [a,b]$ and there exists a
positive constant $C$ such that
\begin{eqnarray*}
\sup_{t\in [a,b]}\|u(t,\cdot)\|_{C^{2+\theta}(\R^N)}\le C\left (
\|f\|_{C^{2+\theta}_b(\R^N)}+\sup_{t\in
[a,b]}\|g(t,\cdot)\|_{C^{\theta}_b(\R^N)}\right ).
\end{eqnarray*}
\end{theorem}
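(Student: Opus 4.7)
The plan is to write the solution via the nonautonomous variation of constants formula
$$
u(t,x) = (P(t,a)f)(x) + \int_a^t (P(t,s)g(s,\cdot))(x)\,ds,
$$
verify that it is a bounded classical solution of \eqref{non-aut-optimal}, and derive the $C^{2+\theta}_b$ bound directly from \eqref{stimasem} by an interpolation/splitting argument. Uniqueness is immediate: the difference of two bounded classical solutions satisfies the homogeneous Cauchy problem with zero initial datum, so it vanishes by the maximum principle underlying Theorem \ref{thm:2.2}.

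For the homogeneous part I would use that \eqref{stimasem} with $h=k\in\{2,3\}$ says $P(t,a)$ is bounded on $C^2_b(\R^N)$ and on $C^3_b(\R^N)$ with constants depending only on $b-a$. Real interpolation between these endpoints, via $C^{2+\theta}_b=(C^2_b,C^3_b)_{\theta,\infty}$, gives $\sup_{t\in[a,b]}\|P(t,a)f\|_{C^{2+\theta}_b}\le C\|f\|_{C^{2+\theta}_b}$. A companion interpolation between the cases $(h,k)=(0,k)$ and $(1,k)$ of \eqref{stimasem} produces the mixed inequality
$$
\|P(t,s)\phi\|_{C^k_b(\R^N)}\le C(t-s)^{-(k-\theta)/2}\|\phi\|_{C^\theta_b(\R^N)},\qquad k=2,3,
$$
which is the workhorse for the Duhamel term.

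The hard part is to estimate $v(t,x):=\int_a^t (P(t,s)g(s,\cdot))(x)\,ds$ in $C^{2+\theta}_b$ uniformly in $t$. Since $(2-\theta)/2<1$, the $k=2$ mixed inequality above is integrable in $s$, so $\|v(t,\cdot)\|_{C^2_b}$ is bounded by $C\sup_s\|g(s,\cdot)\|_{C^\theta_b}$. To control the Hölder seminorm $[D^2 v(t,\cdot)]_\theta$ I would apply the classical splitting trick: fix $x\neq y$ and set $\rho:=|x-y|$. If $\rho^2>t-a$ the bound follows from the sup-norm estimate on $D^2 v$ just derived; if $\rho^2\le t-a$, cut the integral at $s=t-\rho^2$. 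On $(a,t-\rho^2)$ use the mean value theorem together with the $k=3$ mixed estimate, which contributes
$$
C\rho\int_a^{t-\rho^2}(t-s)^{-(3-\theta)/2}\,ds\cdot\sup_s\|g(s,\cdot)\|_{C^\theta_b}\le C\rho^\theta\sup_s\|g(s,\cdot)\|_{C^\theta_b}.
$$
On $(t-\rho^2,t)$ use the triangle inequality together with the $k=2$ mixed estimate, which contributes
$$
2\int_{t-\rho^2}^t C(t-s)^{-(2-\theta)/2}\,ds\cdot\sup_s\|g(s,\cdot)\|_{C^\theta_b}\le C\rho^\theta\sup_s\|g(s,\cdot)\|_{C^\theta_b}.
$$
Combining these with the homogeneous-term estimate and standard interior Schauder arguments (which promote $v$ to a $C^{1,2}$ classical solution) yields the claimed inequality. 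The genuine obstacle is precisely this Hölder estimate of $D^2 v$: the kernel $(t-s)^{-(2-\theta)/2}$ is integrable so that the sup-norm of $D^2 v$ is controlled, but integrability is lost for differences $D^2v(t,x)-D^2v(t,y)$ and must be recovered by trading one extra spatial derivative on the large-time piece against the localisation in $s$ on the small-time piece — the familiar balance at the heart of parabolic Schauder theory.
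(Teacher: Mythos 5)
Your proposal is correct and follows exactly the route the paper indicates: the survey gives no proof of Theorem \ref{thm-schauder} but states that Estimates \eqref{stimasem} are the keystone, and your argument — variation of constants, interpolation of \eqref{stimasem} to get the action of $P(t,s)$ between H\"older spaces, and the classical cut at $s=t-|x-y|^2$ trading the integrable $(t-s)^{-(2-\theta)/2}$ kernel against the third-derivative bound — is precisely the standard Schauder machinery used in the cited sources (\cite{lunardi-studia}, \cite{BL}, \cite{lorenzi-DCDS}). The computations (exponents, integrability thresholds, and the two regimes $\rho^2>t-a$ and $\rho^2\le t-a$) all check out.
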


\begin{remark}
{\rm The Cauchy problem \eqref{non-aut-optimal} has been considered in
\cite{lorenzi-lumer} also in some situation where the coefficients
of the operator ${\mathscr A}$ are not smooth. More specifically, in
\cite{lorenzi-lumer} the case when the operator ${\mathscr A}$ is
given by
\begin{eqnarray*}
({\mathscr A}\varphi)(s,x)
=\sum_{i,j=1}^Nq_{ij}(s,x)D_{ij}\varphi(x)+\sum_{i,j=1}^Nb_{ij}(s)x_jD_i\varphi(x)+\sum_{j=1}^Nc_j(s,x)D_j\varphi(x),
\end{eqnarray*}
for any $s\in [0,T]$ and any $x\in\R^N$, has been considered under
the following set of assumptions.

\begin{hyp}
\par
\noindent
\begin{enumerate}[\rm (i)]
\item
The coefficients $c_j$ and $q_{ij}=q_{ji}:[0,T]\times\R^N\to\R$
($i,j=1,\ldots,N$) are measurable. Moreover, for any $s\in [0,T]$
the functions $c_j(s,\cdot)$ and $q_{ij}(s,\cdot)$ belong to
$C^{\theta}_b(\R^N)$ for some $\theta\in (0,1)$ and
\begin{eqnarray*}
\;\;\;\;\;\;\;\;\;\;\sup_{s\in [0,T]}\|c_j(s,\cdot)\|_{C^{\theta}_b(\R^N)}+\sup_{s\in
[0,T]}\|q_{ij}(s,\cdot)\|_{C^{\theta}_b(\R^N)}<+\infty,\qquad\;\,i,j=1,\ldots,N,
\end{eqnarray*}
\item
there exists $\eta_0>0$ such that
$\sum_{i,j=1}^Nq_{ij}(s,x)\xi_i\xi_j\ge\eta_0 |\xi|^2$, for
any $s\in {\mathscr D}$ and any $x,\xi\in\R^N$, where ${\mathscr D}$
is a measurable set, whose complement is negligible in $[0,T]$;
\item
the coefficients $b_{ij}$ are bounded and measurable in $[0,T]$ for
any $i,j=1,\ldots,N$.
\end{enumerate}
\end{hyp}

Assume that $f\in C^{2+\theta}_b(\R^N)$ and $g$ is a bounded and
measurable function, everywhere defined in $[0,T]\times\R^N$, such
that $g(t,\cdot)\in C^{\theta}_b(\R^N)$ for any $t\in [0,T]$ and
\begin{eqnarray*}
\sup_{t\in [0,T]}\|g(t,\cdot)\|_{C^{\theta}_b(\R^N)}<+\infty.
\end{eqnarray*}
Then, in \cite[Theorem 1.2]{lorenzi-lumer} it has been proved that
there exists a unique function $u$ such that
\begin{enumerate}[\rm (i)]
\item
$u$ is Lipschitz continuous in $[0,T]\times B_R$ for any $R>0$, its
first- and second-order spatial derivatives are bounded and continuous
functions in $[0,T]\times\R^N$;
\item
$u(0,x)=f(x)$ for any $x\in\R^N$;
\item
there exists a set ${\mathscr F}\subset [0,T]\times\R^N$, with negligible
complement, such that $D_tu(t,x)=({\mathscr A}u)(t,x)+g(t,x)$ for any
$(t,x)\in {\mathscr F}$. Moreover, for any $x\in\R^N$, the set
${\mathscr F}(x)=\{t\in [0,T]: (t,x)\in {\mathscr F}\}$ is
measurable with measure $T$.
\end{enumerate}
}
\end{remark}

\subsection{Pointwise gradient estimates}
Pointwise gradient estimates play a particular role in the study of
the properties of the evolution operator $P(t,s)$. By pointwise
gradient estimates we mean any estimate of the type
\begin{equation}
|(\nabla_x P(t,s)\varphi)(x)|^p \leq e^{p\ell_p(t-s)}(P(t,s)|\nabla
\varphi |^p)(x), \qquad\;\, t>s,\;\,x\in\R^N; \label{grad-punt}
\end{equation}
if $\varphi\in C^1_b(\R^N)$ and
\begin{equation}
|(\nabla_x P(t,s)\varphi)(x)|^p \leq C_p \max\{(t-s)^{-p/2}, \,1\}
e^{p\ell_p(t-s)}(P(t,s)|\varphi |^p)(x), \label{grad-punt-1}
\end{equation}
if $\varphi\in C_b(\R^N)$, for any $s,t\in I$, with $s<t$, any $p>1$, and some
constants $C_p>0$ and $\ell_p\in\R$.

Such estimates have been proved in \cite[Theorem 4.5]{kunze-lorenzi-lunardi} and \cite[Theorem 2.6]{lorenzi-lunardi-zamboni}
under Hypothesis \ref{hyp1} and

\begin{hyp}\label{hyp10}
\par
\noindent
\begin{enumerate}[\rm (i)]
\item
The first-order spatial derivatives of the coefficients $q_{ij}$ and $b_i$ $(i,j=1,\ldots,N)$ exist and belong to $C^{\alpha /2, \alpha}_{\rm loc}(I\times \R^N)$;
\item
Hypotheses \ref{hyp2}(ii)-(iii) are satisfied for some upperly
bounded functions $r:I\times\R^N\to\R$ and $\zeta:I\to\R_+$.
\item
the function
\begin{align*}
(s,x)\mapsto r(s,x) +\frac{N^3(\zeta(s))^2\eta(s,x)}{4\min\{p -1,1\}}
\end{align*}
\end{enumerate}
is upperly bounded in $I\times\R^N$.
\end{hyp}

\noindent The constant $\ell_p$ in \eqref{grad-punt} and
\eqref{grad-punt-1} is
\begin{equation}
\ell_p=\sup_{(s,x)\in I\times\R^N}\left (r(s,x)
+\frac{N^3(\zeta(s))^2\eta(s,x)}{4\min\{p -1,1\}}\right ).
\label{ellp}
\end{equation}

\section{Evolution systems of invariant measures}
\setcounter{equation}{0}
\label{sect-4}

Evolution systems of invariant measures (also called \emph{entrance
laws at $-\infty$} in \cite{dynkin}) are the nonautonomous
counterpart of invariant measure. By definition an evolution system
of invariant measures is a one parameter family of probability
measures $\{\mu_s: s\in I\}$ such that
\begin{equation}
\int_{\R^N}P(t,s)fd\mu_t=\int_{\R^N}fd\mu_s,
\label{esim}
\end{equation}
for any $s,t\in I$, with $s<t$ and any $f\in C_b(\R^N)$.

A sufficient condition ensuring the existence of an evolution system
of invariant systems is a variant of the Has'minskii criterion of
the autonomous case. More precisely,

\begin{theorem}[{\cite[Theorem 5.4]{kunze-lorenzi-lunardi}} see also {\cite[Theorem 3.1]{DPR1}}]
Under Hypotheses $\ref{hyp1}(i)$-$(ii)$, suppose that there exist a
positive function $\varphi\in C^2(\R^N)$ blowing up as $|x|\to
+\infty$, positive constants $a$ and $c$, and $s_0\in I$ such that
\begin{align}
({\mathscr A}\varphi)(s,x)\le a-c\varphi(x),\qquad\;\,(s,x)\in
(s_0,+\infty)\times\R^N.
\label{cond-mis-inv}
\end{align}
Then, there exists an evolution system of invariant measure of
$(P(t,s))$.
\end{theorem}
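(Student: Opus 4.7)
The plan is to construct $\{\mu_t\}_{t\in I}$ by a Krylov--Bogoliubov type tightness argument in which the time-averaging of the autonomous Has'minskii proof is replaced by letting the ``initial time'' $s$ tend to the left endpoint of $I$; since evolution systems are genuinely defined only in that regime, I shall assume $I=\R$ and send $s\to-\infty$. The first step is to establish the Lyapunov a priori estimate
\begin{equation*}
(P(t,s)\varphi)(x)\le \frac{a}{c}+\left(\varphi(x)-\frac{a}{c}\right)e^{-c(t-s)},\qquad s_0<s<t,\;\,x\in\R^N.
\end{equation*}
Heuristically, $w(t)=(P(t,s)\varphi)(x)$ satisfies $w'(t)\le a-cw(t)$ by virtue of \eqref{cond-mis-inv}, the positivity of $P(t,s)$ and the identity $P(t,s)\mathbf{1}=\mathbf{1}$ from Proposition \ref{prop-luca}(i); Gronwall concludes. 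Since $\varphi$ is unbounded, this must be justified by approximation: I would introduce the auxiliary function $Z(t,x):=\varphi(x)e^{-c(t-s)}+(a/c)(1-e^{-c(t-s)})$, check that $(D_t-\mathscr{A})Z\ge 0$ on $(s,+\infty)\times\R^N$ using \eqref{cond-mis-inv}, apply the parabolic maximum principle to $u_n-Z$ on $(s,t]\times\overline{B_n}$ where $u_n$ is the Dirichlet solution of \eqref{approx-dirichlet} with initial datum $\varphi\wedge n$, and finally pass to the limit $n\to+\infty$ exactly as in the proof of Theorem \ref{thm:2.2}.

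Next, fix $x_0\in\R^N$ and $t\in I$ with $t>s_0$. By Proposition \ref{prop-luca}(i) the transition probabilities $p_{t,s}(x_0,\cdot)$, $s\in(s_0,t)$, form a family of Borel probability measures on $\R^N$, and the previous step yields
\begin{equation*}
\int_{\R^N}\varphi\,dp_{t,s}(x_0,\cdot)=(P(t,s)\varphi)(x_0)\le \frac{a}{c}+\varphi(x_0).
\end{equation*}
Since the sublevel sets $\{\varphi\le R\}$ are relatively compact, Chebyshev gives $p_{t,s}(x_0,\{\varphi>R\})\le R^{-1}(a/c+\varphi(x_0))$, i.e.\ tightness uniformly in $s$. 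Choosing a countable dense set $\{t_k\}\subset I$ and a sequence $s_n\to-\infty$, Prokhorov's theorem together with a diagonal extraction produces a subsequence (still denoted $s_n$) along which $p_{t_k,s_n}(x_0,\cdot)\rightharpoonup \mu_{t_k}$ weakly for every $k$. For a general $t\in I$ I would then define $\mu_t$ by writing $P(t,s_n)=P(t,t_k)\circ P(t_k,s_n)$ for any $t_k<t$ from the dense set, passing to the weak limit in $n$ with the help of Proposition \ref{prop-2.5}, and checking that the resulting $\mu_t$ is independent of the choice of $t_k$.

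Finally, fix $s<t$ in $I$ and $f\in C_b(\R^N)$. The evolution law yields $(P(t,s_n)f)(x_0)=(P(s,s_n)(P(t,s)f))(x_0)$, and since $P(t,s)f\in C_b(\R^N)$ by Theorem \ref{thm:2.2}, letting $n\to+\infty$ along the extracted subsequence and using the construction of $\mu_s$ and $\mu_t$ from the previous paragraph recovers the invariance identity \eqref{esim}. I expect the main obstacle to be the rigorous derivation of the Lyapunov estimate in the first step for the unbounded $\varphi$, which forces an approximation argument through the Dirichlet problems on $B_n$ together with a barrier/maximum-principle comparison; a secondary technical point is that weak convergence of probability measures alone does not automatically justify the limits against unbounded integrands such as $\varphi$ in the tightness estimate, nor against a merely bounded continuous test function such as $P(t,s)f$ in the invariance step, so the tightness bound combined with the continuity statement in Proposition \ref{prop-2.5} must be used carefully to rule out loss of mass at infinity.
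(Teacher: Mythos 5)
Your first step (the Lyapunov bound $(P(t,s)\varphi)(x)\le a/c+(\varphi(x)-a/c)e^{-c(t-s)}$, proved by comparing the Dirichlet approximants $u_n$ with the barrier $Z$) is correct, but only for $s_0<s<t$, because \eqref{cond-mis-inv} is assumed only on $(s_0,+\infty)\times\R^N$; and this is precisely where your construction breaks down. You fix $t$ and send the initial time $s_n\to-\infty$, so that eventually $s_n<s_0$ (indeed $s_n\notin I$ unless $I=\R$, which the theorem does not assume: $I$ is any right halfline), and on the interval $[s_n,t]$ the supersolution inequality $(D_t-{\mathscr A})Z\ge 0$ is no longer available; hence the tightness of the family $\{p_{t,s_n}(x_0,\cdot)\}_n$ is not established. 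The limit must be taken in the opposite direction: keep $s$ fixed (with $s>s_0$) and let the \emph{other} time variable tend to $+\infty$, i.e.\ set $\mu_s=\lim_k p_{t_k,s}(x_0,\cdot)$ along a subsequence $t_k\to+\infty$ obtained from Prokhorov's theorem and a diagonal argument over a countable dense set of values of $s$. This is exactly what the remark following the theorem indicates: since $\mu_s=P(t,s)^*\mu_t$ for $s<t$, the measures at small times are determined by those at large times, so one only needs to produce $\mu_t$ for $t$ large, which is why \eqref{cond-mis-inv} is required only near $+\infty$; for $s\le s_0$ one then \emph{defines} $\mu_s:=P(r,s)^*\mu_r$ for any $r>s_0$.

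The second, independent error is in the invariance check. The evolution law reads $P(t,s_n)=P(t,s)P(s,s_n)$ for $s_n<s<t$ (the inner indices must telescope), not $P(t,s_n)f=P(s,s_n)\bigl(P(t,s)f\bigr)$ as you write; the latter identity is false, and with your definition of $\mu_t$ the quantity $\int_{\R^N}P(t,s)f\,d\mu_t=\lim_n\bigl(P(t,s_n)\{P(t,s)f\}\bigr)(x_0)$ does not reduce to $\lim_n(P(s,s_n)f)(x_0)=\int_{\R^N}f\,d\mu_s$. With the corrected construction the verification of \eqref{esim} is immediate: $\int_{\R^N}P(t,s)f\,d\mu_t=\lim_k\bigl(P(t_k,t)P(t,s)f\bigr)(x_0)=\lim_k(P(t_k,s)f)(x_0)=\int_{\R^N}f\,d\mu_s$, the passage to the limit being legitimate because $P(t,s)f\in C_b(\R^N)$. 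Your remaining ingredients (Chebyshev combined with the relative compactness of the sublevel sets of $\varphi$, Prokhorov, the diagonal extraction, and the caveats about unbounded integrands) are the right ones and transfer verbatim to the corrected construction.
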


\begin{example}
{\rm Condition \eqref{cond-mis-inv} is satisfied, for instance, in the case when
the operator ${\mathscr A}$ is defined on smooth functions $\varphi$ by
\begin{eqnarray*}
({\mathscr A}\varphi)(s,x) = \Delta \varphi(x) + \sum_{j=1}^Nb_j(s,x)D_j\varphi(x),
\end{eqnarray*}
under the following assumptions on $b=(b_1,\ldots,b_N)$.
\begin{hyp}
~\par
\begin{enumerate}[\rm (i)]
\item
The functions $b_j$ $(j=1,\ldots,N)$ and their first-order spatial derivatives belong to $C^{\alpha/2,\alpha}_{\rm loc}(I\times\mathbb R^N)$ for some $\alpha\in (0,1)$;
\item
the function $b(\cdot,0)$ is bounded in $I$;
\item
there exists a continuous function $C:I\to\R$ such that
\begin{enumerate}[\rm (a)]
\item
$C$ is bounded from above in $I$;
\item
$\limsup_{t\to +\infty}C(t)<0$;
\item
$\langle \nabla_xb(t,x)\xi,\xi\rangle\le C(t)|\xi|^2$ for any $t\in I$, and $x,\xi\in\R^N$.
\end{enumerate}
\end{enumerate}
\end{hyp}

A straightforward computation reveals that, for any $m\in\N$, the function $\varphi:\R^N\to\R$, defined by $\varphi(x)=1+|x|^{2m}$ for any $x\in\R^N$, satisfies Condition \eqref{cond-mis-inv} for some $s_0\in I$.}
\end{example}

The main difference with the classical Has'minskii criterion is that
this latter just requires that the function ${\mathscr A}\varphi$
tends to $-\infty$ as $|x|\to +\infty$ without any condition on the
way it diverges to $-\infty$.

It is worth noting that Condition \eqref{cond-mis-inv} is assumed
only in a neighborhood of $+\infty$ and not in the whole of $I$.
Indeed, if the family $\{\mu_s: s\in I\}$ satisfies \eqref{esim},
then $\mu_s=P(t,s)^*\mu_t$ for any $s<t$, $s\in I$, where $P(t,s)^*$
denotes the adjoint to the operator $P(t,s)$. Hence, the measures
$\mu_s$ are uniquely determined by $\mu_t$ through the evolution
operator. The main issue is, thus, the proof of the existence of
$\mu_t$ for $t$ large.

We mention that the existence of an evolution system of invariant
measures has been proved also in \cite{BDPR08}, under different
assumptions on the coefficients of the operator ${\mathscr A}$,  and
in \cite{DPR}, for a class of nonautonomous elliptic operators,
obtained by perturbing the drift coefficients of an autonomous
Ornstein-Uhlenbeck operator by a function $F:\R^{1+N}\to\R$, which
is, roughly speaking, Lipschitz continuous in $x$ uniformly with
respect to $s$ and of dissipative type.

As a matter of fact, the evolution systems of invariant measures are
infinitely many in general, this being in contrast to the autonomous
case where the invariant measure is unique whenever the semigroup
$(T(t))$ associated with the autonomous operator ${\mathscr A}$ is
strong Feller and irreducible (properties that $(T(t))$ fulfills
under very weak assumptions on the coefficients of the operator
${\mathscr A}$).

In the case when ${\mathscr A}_O$ is the nonautonomous Ornstein-Uhlenbeck operator
\begin{align*}
({\mathscr A}_O\varphi)(s,x) =  \sum_{i,j=1}^N q_{ij}(s)D_{ij}\varphi
(x)+ \sum_{i,j=1}^N b_{ij}(s)x_jD_i\varphi(x), \qquad
(s,x)\in\R^{1+N},
\end{align*}
where $Q=(q_{ij})$ is uniformly positive definite,
Geissert and Lunardi in \cite[Proposition 2.2]{geissert-lunardi}
have proved the existence of an evolution system of invariant measures
in the case when
there exist positive constants $C_0$ and $\omega$ such that
\begin{equation}
\|U(t,s)\|_{L(\R^N)}\le C_0e^{-\omega(s-t)},\qquad\;\,s,t\in\R,\;\,s\ge t,
\label{cond-U}
\end{equation}
where $U(\cdot,s)$ solves the Cauchy problem \eqref{pb-per-U}.
Actually, Geissert and Lunardi define the nonautonomous
Ornstein-Uhlenbeck operator as the operator $(G_O(t,s))$ naturally
associated with the Cauchy problem
\begin{equation}
\left\{
\begin{array}{lll}
D_tu(t,x)+({\mathscr A}_Ou)(t,x)=0, &t<s, &x\in\R^N,\\[1mm]
u(s,x)=f, &&x\in\R^N,
\end{array}
\right.
\label{pb-indietro}
\end{equation}
i.e., $G_O(t,s)f$ is the value at $t$ of the unique solution to
\eqref{pb-indietro}. But a straightforward change of variables
allows to transform Problem \eqref{pb-indietro} into an initial
value problem of the form \eqref{Cauchy-intro}. If we denote by
$(P_O(t,s))$ the evolution operator associated with Problem
\eqref{Cauchy-intro}, all the results in \cite{daprato-lunardi-2}
can be rephrased for the operator $P_O(t,s)$ just observing that
\begin{eqnarray*}
P_O(t,s)f=G_O(-t,-s)f,\qquad\;\,t>s,\;\,f\in C_b(\R^N).
\end{eqnarray*}
where $G_O(t,s)$ is the evolution operator solving the Cauchy problem
\eqref{pb-indietro}, the operator ${\mathscr A}$ being defined by
\begin{eqnarray*}
({\mathscr A}_O\varphi)(s,x)=\sum_{i,j=1}^Nq_{ij}(-s)D_{ij}\varphi(x)+\sum_{i,j=1}^Nb_{ij}(-s)x_jD_i\varphi(x),\qquad\;\,s\in\R,\;\,x\in\R^N.
\end{eqnarray*}
on smooth functions $\varphi$.

Condition \eqref{cond-U} is essentially optimal since in the
autonomous case, $U(t,s)=e^{-(t-s)B}$ and \eqref{cond-U} is
equivalent to saying that the spectrum of $B$ lies in the left-hand
plane, which is the necessary and sufficient condition for the
Ornstein-Uhlenbeck semigroup have an invariant measure.

Under Condition \eqref{cond-U} Geissert and Lunardi characterized
all the evolution systems of invariant measures. To state more
precisely their result, we recall that for any probability measure
$\mu$, its Fourier transform $\hat\mu$ is defined as follows:
\begin{eqnarray*}
\hat\mu(h)=\int_{\R^N}e^{i\langle
x,h\rangle}\mu(dx),\qquad\;\,h\in\R^N.
\end{eqnarray*}
Moreover, we set
\begin{align*}
Q_s=\int_s^{+\infty}U(s,\xi)Q(\xi)U(s,\xi)^*d\xi,\qquad s\in\R.
\end{align*}
Then,

\begin{theorem}[{\cite[Proposition 2.2 and Lemma 2.3]{geissert-lunardi}}]
\label{thm-charact}
Fix $t_0\in\R$ and let $\mu$ be a probability
measure in $\R^N$. Further, let $\{\mu_t: t\in\R\}$ be the family of
probability measures defined through its Fourier transform, by
\begin{eqnarray*}
\hat\mu_t(h)=\hat\mu(U^*(t,t_0)h),\qquad\;\,t\in\R,\;\,h\in\R^N.
\end{eqnarray*}
Let $\{\nu_t: t\in\R\}$  be the family of measures defined, through
its Fourier transform, by
\begin{equation}
\hat\nu_t(h)=\exp\left (-\frac{1}{2}\langle Q_th,h\rangle\right
)\hat\mu_t(h),\qquad\;\,t>0,\;\,h\in\R^N. \label{express-mut}
\end{equation}
If $\{\nu_t: t\in\R\}$ is an evolution system of invariant
measure of $(P(t,s))$, then it has the form \eqref{express-mut}.

Finally, there exists a unique evolution system of invariant
measures with finite moments of some/any order, i.e. there exists a
unique family $\{\mu_s: s\in I\}$ of invariant measure such that
\begin{eqnarray*}
\sup_{s\in\R}\int_{\R^N}|x|^p\mu_s(dx)<+\infty,
\end{eqnarray*}
for some/any $p>0$. For any $s\in\R$, it holds that
\begin{align}
\mu_s(dx)= (4\pi)^{-\frac{N}{2}}(\det Q_s)^{- \frac{1}{2}}
 \,e^{-\frac{1}{4}\langle Q_s^{-1}x,x\rangle},\qquad s\in\R,\
 x\in\R^N.
 \label{gaussian-family}
 \end{align}
\end{theorem}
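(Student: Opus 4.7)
The strategy is to pass through the Fourier transform, exploiting the explicit Gaussian kernel \eqref{nonaut-OU} for $P_O(t,s)$, which sends exponentials $f_h(x):=e^{i\langle x,h\rangle}$ back to (scalar multiples of) exponentials, thereby converting the invariance identity into a functional equation for the characteristic functions $\hat\nu_t$.

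Plugging $f_h$ into $\int_{\R^N} P_O(t,s)f_h\,d\nu_t = \int_{\R^N}f_h\,d\nu_s$ and evaluating the Gaussian integral in \eqref{nonaut-OU} (by completing the square) yields
\[
(P_O(t,s)f_h)(x) = e^{-\frac{1}{2}\langle Q_{t,s}h,h\rangle}\, e^{i\langle x, U^*(s,t)h\rangle},
\]
and integrating against $\nu_t$ turns the invariance identity into
\[
\hat\nu_s(h) = e^{-\frac{1}{2}\langle Q_{t,s}h,h\rangle}\, \hat\nu_t(U^*(s,t)h),\qquad s<t,\; h\in\R^N.
\]
Setting $\hat\mu_t(h):=\hat\nu_t(h)\exp(\tfrac{1}{2}\langle Q_t h,h\rangle)$ and using the algebraic splitting $Q_s = Q_{t,s} + U(s,t)\,Q_t\,U^*(s,t)$ for $s<t$ (which follows from the definition of $Q_t$ combined with the semigroup property of $U$), the functional equation reduces to $\hat\mu_s(h) = \hat\mu_t(U^*(s,t)h)$. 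Fixing a reference time $t_0$ and setting $\mu:=\mu_{t_0}$, a change of variables gives $\hat\mu_t(h) = \hat\mu(U^*(t,t_0)h)$, which is exactly the form \eqref{express-mut}. That $\mu$ is a genuine probability measure follows via Bochner's theorem, with continuity and normalization $\hat\mu_{t_0}(0)=1$ clear, and positive definiteness inherited from the probabilistic structure of $\nu_{t_0}$.

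For the uniqueness under a moment hypothesis, the representation above can be read as $\nu_s = (U(s,t_0)_*\mu) * N(0,Q_s)$, i.e., $\nu_s$ is the convolution of the pushforward of $\mu$ along $U(s,t_0)$ with a centered Gaussian of covariance $Q_s$. If $\sup_s\int_{\R^N}|x|^p\,d\nu_s<+\infty$ for some $p>0$, the same bound holds uniformly for $U(s,t_0)_*\mu$. Letting $s\to-\infty$ and invoking \eqref{cond-U} gives $\|U(s,t_0)\|_{L(\R^N)}\to 0$ exponentially, so the pushforward concentrates at the origin and consistency forces $\mu=\delta_0$. With this choice $\hat\mu_t\equiv 1$ and $\hat\nu_t(h) = e^{-\frac{1}{2}\langle Q_t h,h\rangle}$, which is the Fourier transform of the Gaussian family \eqref{gaussian-family}. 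Existence is then a direct verification: with $\mu=\delta_0$ one checks, by reversing the first step, that the resulting family does satisfy the evolution identity.

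The main obstacle is the uniqueness step: one has to carefully rule out that mass of $\mu$ escapes to infinity along the directions where $U(s,t_0)$ contracts slowest, using only the uniform $p$-th moment bound on $\{\nu_s\}$ together with the dissipative estimate \eqref{cond-U} to conclude $\mu=\delta_0$. The algebraic identity $Q_s = Q_{t,s}+U(s,t)Q_t U^*(s,t)$ is the other key ingredient, and is the mechanism that makes the "Gaussian part" and the "deterministic part" $\hat\mu_t$ decouple cleanly.
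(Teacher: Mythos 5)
Your overall strategy (testing the invariance identity on the exponentials $e^{i\langle\cdot,h\rangle}$, computing the Gaussian integral in \eqref{nonaut-OU} explicitly, and decoupling via the identity $Q_s=Q_{t,s}+U(s,t)Q_tU^*(s,t)$) is exactly the mechanism behind the cited result of Geissert and Lunardi, and the derivation of the functional equation $\hat\nu_s(h)=e^{-c\langle Q_{t,s}h,h\rangle}\hat\nu_t(U^*(s,t)h)$ and its reduction to $\hat\mu_s(h)=\hat\mu_t(U^*(s,t)h)$ are correct. (A normalization caveat: with the kernel \eqref{nonaut-OU} as written the Gaussian integral produces $e^{-\langle Q_{t,s}h,h\rangle}$ rather than $e^{-\frac12\langle Q_{t,s}h,h\rangle}$, consistent with \eqref{gaussian-family}; the survey's \eqref{express-mut} carries the other convention, so this is not your error.) Two points, however, need repair. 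First, the claim that $\hat\mu_{t_0}=\hat\nu_{t_0}\,e^{+c\langle Q_{t_0}\cdot,\cdot\rangle}$ is positive definite is not ``inherited from the probabilistic structure of $\nu_{t_0}$'': dividing a characteristic function by a Gaussian one is not positive definite in general (take $\nu_{t_0}=\delta_0$ and an arbitrary Gaussian). You must use the evolution structure: from $\nu_s=(U(s,t)_*\nu_t)*N(0,2Q_{t,s})$ one gets $\widehat{U(s,t)_*\nu_t}(h)=\hat\nu_s(h)e^{+c\langle Q_{t,s}h,h\rangle}\to\hat\nu_s(h)e^{+c\langle Q_sh,h\rangle}$ as $t\to+\infty$, and L\'evy's continuity theorem (the limit is continuous at $0$) shows this limit is a characteristic function.

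Second, and more seriously, the uniqueness step is argued in the wrong direction. By \eqref{cond-U}, $\|U(s,t_0)\|\le C_0e^{-\omega(t_0-s)}$ for $s\le t_0$, so as $s\to-\infty$ the pushforward $U(s,t_0)_*\mu$ concentrates at the origin for \emph{every} probability measure $\mu$; no contradiction or ``consistency'' constraint arises there, so this limit yields no information about $\mu$. The correct mechanism is the opposite limit: since $U(s,t_0)=U(t_0,s)^{-1}$ and $\|U(t_0,s)\|\le C_0e^{-\omega(s-t_0)}$ for $s\ge t_0$, one has $|U(s,t_0)y|\ge C_0^{-1}e^{\omega(s-t_0)}|y|$, hence
\begin{equation*}
\int_{\R^N}|x|^p\,d\bigl(U(s,t_0)_*\mu\bigr)(x)\;\ge\;C_0^{-p}e^{p\omega(s-t_0)}\int_{\R^N}|y|^p\,\mu(dy),
\end{equation*}
which blows up as $s\to+\infty$ unless $\int|y|^p\mu(dy)=0$, i.e.\ $\mu=\delta_0$. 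Combined with your (correct) observation that the centered Gaussian factor has uniformly bounded moments (because $\sup_s\|Q_s\|<+\infty$ under \eqref{cond-U}) and that the moment of a convolution with a centered factor dominates the moment of the other factor, this is what forces \eqref{gaussian-family}. With these two corrections your argument becomes the standard proof.
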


For more general nonautonomous Kolmogorov operators, in
\cite[Theorem 5.6]{kunze-lorenzi-lunardi} we have proved the
counterpart of the last statement of Theorem \ref{thm-charact}.

\begin{theorem}\label{asymptotics}
Assume that there exists $\omega < 0$ such that
\begin{eqnarray*}
\|\nabla P(t,s)f\|_{\infty} \leq Ce^{\omega (t-s)}\|f\|_{\infty},
\end{eqnarray*}
for all $t \geq s+1$, all $f \in C_b(\R^N)$ and some positive constant $C$.
Then, there exists at most one evolution system of invariant measure $\{\mu_t: t\in\R\}$
such that $\lim_{t\to +\infty} \mu_t (p)  e^{ \omega pt} =0$ for some $p>0$.
\end{theorem}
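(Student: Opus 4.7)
The plan is to show that any two evolution systems $\{\mu_t\}$ and $\{\nu_t\}$ of invariant measures which satisfy the moment decay $\mu_t(p)e^{\omega pt},\nu_t(p)e^{\omega pt}\to 0$ (possibly with different $p$'s for the two systems, the argument being then applied separately to each integral below) must coincide for every $t\in\R$. Fix $s\in\R$ and $f\in C_b(\R^N)$. Applying \eqref{esim} to both families and observing that $\mu_\tau,\nu_\tau$ are probability measures (so the constant $(P(\tau,s)f)(0)$ integrates to itself), one gets
\begin{align*}
\int_{\R^N}\!f\,d\mu_s-\int_{\R^N}\!f\,d\nu_s=\int_{\R^N}\![(P(\tau,s)f)(x)-(P(\tau,s)f)(0)]\,d(\mu_\tau-\nu_\tau)(x),
\end{align*}
for every $\tau>s$. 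The strategy is to let $\tau\to+\infty$ and use the exponential gradient estimate to force the right-hand side to vanish.

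For $\tau\geq s+1$ the hypothesis yields the Lipschitz bound $|(P(\tau,s)f)(x)-(P(\tau,s)f)(0)|\le Ce^{\omega(\tau-s)}\|f\|_\infty|x|$ by the mean value theorem, and we always have the trivial uniform bound $|(P(\tau,s)f)(x)-(P(\tau,s)f)(0)|\le 2\|f\|_\infty$. When $p\ge 1$ the Lipschitz bound alone suffices, since Jensen's inequality gives $\int|x|\,d\mu_\tau\le(\mu_\tau(p))^{1/p}$, whence the right-hand side above is controlled by
\begin{align*}
Ce^{-\omega s}\|f\|_\infty\bigl\{(e^{\omega p\tau}\mu_\tau(p))^{1/p}+(e^{\omega p\tau}\nu_\tau(p))^{1/p}\bigr\},
\end{align*}
which vanishes as $\tau\to+\infty$ by hypothesis. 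When $0<p<1$ I would instead interpolate the two bounds via the elementary inequality $\min(a,b)\le a^pb^{1-p}$ (for $a,b>0$) to obtain $|(P(\tau,s)f)(x)-(P(\tau,s)f)(0)|\le(2\|f\|_\infty)^{1-p}C^pe^{\omega p(\tau-s)}\|f\|_\infty^p|x|^p$; integrating against $\mu_\tau+\nu_\tau$ then produces a bound proportional to $e^{-\omega ps}\|f\|_\infty(e^{\omega p\tau}\mu_\tau(p)+e^{\omega p\tau}\nu_\tau(p))$, which again tends to zero by hypothesis.

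In both regimes we conclude $\int f\,d\mu_s=\int f\,d\nu_s$ for every $f\in C_b(\R^N)$, hence $\mu_s=\nu_s$ by the standard fact that two Borel probability measures which agree on $C_b(\R^N)$ must coincide. The main subtlety is the interpolation required in the subexponential range $p<1$, where the Lipschitz bound alone is ineffective and must be combined with the sup-norm bound; once this interpolation is in place the moment decay hypothesis is precisely tailored to kill the remaining factor, and the rest of the argument is a formal consequence of the invariance identity \eqref{esim} combined with the exponential gradient estimate.
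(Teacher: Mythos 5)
Your argument is correct: the identity obtained from \eqref{esim} for the two systems, the Lipschitz bound from the assumed gradient estimate combined with the trivial sup-norm bound via the interpolation $\min(a,b)\le a^pb^{1-p}$ for $p<1$ (and Jensen for $p\ge 1$), and the moment-decay hypothesis together yield $\mu_s=\nu_s$. The survey does not reproduce a proof but refers to \cite[Theorem 5.6]{kunze-lorenzi-lunardi}, whose argument is essentially the same as yours (there one writes the difference as a double integral of $(P(\tau,s)f)(x)-(P(\tau,s)f)(y)$ against $\mu_\tau\otimes\nu_\tau$ rather than subtracting the value at the origin, which is an immaterial variant).
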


It is worth noticing that the previous theorem is in complete
agreement with the case of the Ornstein-Uhlenbeck operator. Indeed,
Condition \eqref{cond-U} implies that the Ornstein-Uhlenbeck
evolution operator $P_O(t,s)$ satisfies the pointwise gradient
Estimates \eqref{grad-punt} and \eqref{grad-punt-1} for any $p>1$
with $\ell_p=\omega$.

Let's go back to the fundamental Formula \eqref{esim}. Using Jensen
inequality and \eqref{repres} one can show that $|P(t,s)f|^p\le
P(t,s)|f|^p$ for any $s<t$ and any $f\in C_b(\R^N)$. Hence, using
\eqref{esim} one gets
\begin{equation}
\int_{\R^N}|P(t,s)f|^pd\mu_t\le \int_{\R^N}P(t,s)|f|^pd\mu_t=\int_{\R^N}|f|^pd\mu_s.
\label{star}
\end{equation}
Since $C_b(\R^N)$ is dense in $L^p(\R^N,\mu_s)$, the above formula
shows that each operator $P(t,s)$ can be extended to a contraction
from $L^p(\R^N,\mu_s)$ to $L^p(\R^N,\mu_t)$.

Note that, even if for different values of $t$ and $s$ the measures
$\mu_t$ and $\mu_s$ are equivalent (since they both are equivalent
to the Lebesgue measure), the spaces $L^p(\R^N,\mu_s)$ and
$L^p(\R^N,\mu_t)$ are different, in general. This makes the study of
the evolution operator $(P(t,s))$ in these $L^p$-spaces much more
difficult than in the autonomous case where $\mu_s\equiv\mu$ for any
$s\in\R$ and the semigroup $(T(t))$ maps $L^p(\R^N,\mu)$ into
itself. We go back to this point in Section \ref{sect-7}.

Since $\mu_t$ is a probability measure, $L^p(\R^N,\mu_t)$ contains all the bounded measurable functions.
A complete characterization of $L^p(\R^N,\mu_t)$ is out of scope since the measure $\mu_t$ is, in general, not explicit.
It is thus very important to determine suitable (unbounded) functions which belong to $L^p(\R^N,\mu_t)$.
As a matter of fact, if $\{\mu_t: t\in I\}$ is the evolution system of measures constructed in \cite[Theorem 5.4]{kunze-lorenzi-lunardi},
then the function $\varphi$ in \eqref{cond-mis-inv} is $L^1(\R^N,\mu_t)$ for any $t\ge s_0$. Moreover,
\begin{eqnarray*}
\sup_{t\ge s_0}\int_{\R^N}\varphi\, d\mu_t<+\infty.
\end{eqnarray*}
Hence, any function $f$ whose modulus can be controlled from above by  $C\varphi^{1/p}$, for a suitable positive constant
$C$, is in $L^p(\R^N,\mu_t)$ for any $t\ge s_0$.

\section{The evolution operator and the evolution semigroup in suitable $L^p$-spaces}
\setcounter{equation}{0}
\label{sect-5}

From now on, we assume that $I=\R$.
Moreover, we assume that Hypothesis \ref{hyp1} and Condition \eqref{cond-mis-inv} are satisfied.

As in the classical case (see e.g., \cite{chicone-latushkin}), it is
natural to introduce a semigroup of linear operators associated with
the operator $P(t,s)$. It is defined by
\begin{equation}
(T(t)f)(s,x)=(P(s,s-t)f(s-t,\cdot))(x),\qquad\;\,t>0,\;\,(s,x)\in\R^{1+N},
\label{sem-evol}
\end{equation}
for any $f\in C_b(\R^{1+N})$. Clearly, each operator $T(t)$ is a
contraction in $C_b(\R^{1+N})$. Note that $(T(t))$ agrees with the
semigroup of the translations when restricted to functions which are
independent of $x$. It follows that $(T(t))$ always fails to be
strongly continuous in $C_b(\R^{1+N})$. Moreover, it is neither
strong Feller nor irreducible. This means that $T(t)$ does not
improve the regularity of the datum $f$. More precisely, it does not
improve the regularity with respect to $s$ and it does not
transform nonnegative functions in strictly positive functions.
(Note that since $P(t,s)f\in C^2(\R^N)$ for any $f\in
C_b(\R^N)$ and any $t>s$, the function $T(t)f$ is twice continuously
differentiable in $\R^{1+N}$ with respect to the spatial variables,
for any $f\in C_b(\R^{1+N})$.)

Even if $(T(t))$ is not strongly continuous, one can associate an
infinitesimal generator (the so-called weak generator) $G_{\infty}$
to it, as in the case of semigroups associated with autonomous
elliptic operator. There are two equivalent ways to define the weak generator.
The first way, the more abstract one, consists in observing that the
family of bounded operators $\{R(\lambda): \lambda>0\}$, defined by
\begin{eqnarray*}
(R(\lambda)f)(s,x)=\int_0^{+\infty}e^{-\lambda t}(T(t)f)(s,x)dt,\qquad\;\,(s,x)\in\R^{1+N},
\end{eqnarray*}
for any $f\in C_b(\R^{1+N})$, satisfies the resolvent identity and
each operator of the family is injective. Hence, $\{R(\lambda):
\lambda>0\}$ is the resolvent family associated with some closed
operator, which we call the weak generator of $(T(t))$. A more
``concrete'' way to introduce $G_{\infty}$ (which is closer to the
definition of the infinitesimal generator of a strongly continuous
semigroup) is to define it as follows: $f\in D(G_{\infty})$ if and
only if
\begin{eqnarray*}
\sup_{t\in (0,1]}\left\|\frac{T(t)f-f}{t}\right\|_{\infty}<+\infty,
\end{eqnarray*}
and there exists $g\in C_b(\R^{1+N})$ such that
$\frac{T(t)f-f}{t}$ converges to $g$ as $t\to 0^+$ pointwise in $\R^{1+N}$. In this case $G_{\infty}f=g$.

$D(G_{\infty})$ turns out to be the maximal domain of the realization of the operator ${\mathscr G}:={\mathscr A}-D_s$
in $C_b(\R^{1+N})$. More precisely,

\begin{theorem}[{\cite[Theorem 2.8]{lorenzi-zamboni}}]
Under Hypothesis $\ref{hyp1}$
\begin{equation}
D(G_{\infty})= \bigg\{ \psi\in
\bigcap_{p<+\infty}W^{1,2}_p((-R,R)\times B_R) ~{\rm for~any}~R>0:
\psi,~{\mathscr G}\psi\in C_b(\R^{1+N})\bigg\}. \label{DGinfty}
\end{equation}
\end{theorem}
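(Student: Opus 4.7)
The plan is to prove the two inclusions separately. For the inclusion $\supseteq$, let $\psi$ belong to the right-hand side of \eqref{DGinfty}. The key step is the integral identity
\begin{eqnarray*}
(T(t)\psi)(s,x) - \psi(s,x) = \int_{s-t}^s (P(s,\tau)(\mathscr{G}\psi)(\tau,\cdot))(x)\,d\tau,\qquad\;\, t>0,\;\,(s,x)\in\R^{1+N},
\end{eqnarray*}
which I would first establish for $\psi\in C^{\infty}_c(\R^{1+N})$ by differentiating the map $\tau\mapsto (P(s,\tau)\psi(\tau,\cdot))(x)$ on $[s-t,s]$ and invoking the backward equation $\frac{d}{d\tau}P(s,\tau)f = -P(s,\tau)\mathscr{A}f$ recalled in Section \ref{sect-2}; the chain rule gives that the derivative equals $-(P(s,\tau)(\mathscr{G}\psi)(\tau,\cdot))(x)$. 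A cutoff-and-mollification approximation of a general $\psi$ in the local-Sobolev class, combined with the contractivity of $P(s,\tau)$ and the continuity properties in Proposition \ref{prop-2.5}, extends the identity to the whole right-hand side of \eqref{DGinfty}. Dividing by $t$ and using $\mathscr{G}\psi\in C_b(\R^{1+N})$ together with the bound $\|P(s,\tau)(\mathscr{G}\psi)(\tau,\cdot)\|_{\infty}\le\|\mathscr{G}\psi\|_{\infty}$ shows that the difference quotients are uniformly bounded in $t\in(0,1]$ and converge pointwise to $\mathscr{G}\psi(s,x)$ (because the integrand is continuous in $\tau$ at $\tau=s$, where it equals $\mathscr{G}\psi(s,x)$); hence $\psi\in D(G_\infty)$ with $G_\infty\psi=\mathscr{G}\psi$.

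For the opposite inclusion $\subseteq$, let $\psi\in D(G_\infty)$ with $G_\infty\psi = g$, fix $\lambda>0$ and set $h:=\lambda\psi-g\in C_b(\R^{1+N})$, so that $\psi = R(\lambda)h$. A Fubini computation rewrites this as
\begin{eqnarray*}
\psi(s,x)=\int_{-\infty}^s e^{-\lambda(s-\tau)}(P(s,\tau)h(\tau,\cdot))(x)\,d\tau.
\end{eqnarray*}
It thus suffices to prove that, for every $h\in C_b(\R^{1+N})$, the function $u:=R(\lambda)h$ belongs to $\bigcap_{p<+\infty}W^{1,2}_p((-R,R)\times B_R)$ for every $R>0$ and solves $\mathscr{G}u=\lambda u-h$ a.e. in $\R^{1+N}$: combining this with $\lambda\psi-g=h$ will then give $\mathscr{G}\psi=g\in C_b(\R^{1+N})$. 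To prove it I would approximate $h$ by a bounded sequence $(h_n)$ of functions in $C^{\alpha/2,\alpha}_c(\R^{1+N})$ converging to $h$ locally uniformly. For such regular data the $u_n:=R(\lambda)h_n$ are bounded classical solutions of $D_s u_n-\mathscr{A}u_n+\lambda u_n=-h_n$ in $\R^{1+N}$ (this can be seen by the very same arguments used in the proof of Theorem \ref{thm-schauder}, adapted to the resolvent representation and without the need to track $C^{2+\theta}$-regularity). Interior $L^p$-estimates for nondegenerate parabolic operators on enlarged cylinders $(-R-1,R+1)\times B_{R+1}$ yield, for every $p<+\infty$, a uniform bound of $u_n$ in $W^{1,2}_p((-R,R)\times B_R)$; dominated convergence and Proposition \ref{prop-2.5} give $u_n\to u$ pointwise, and a standard weak-compactness argument upgrades this to weak convergence in $W^{1,2}_{p,{\rm loc}}$. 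Passing to the limit in the PDE satisfied by $u_n$ yields the required regularity of $u$ and the identity $\mathscr{G}u=\lambda u-h$ a.e.

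The main obstacle is the local $W^{1,2}_p$-regularity of $R(\lambda)h$ when the datum $h$ is merely bounded and continuous: one cannot directly apply the optimal Schauder estimates of Theorem \ref{thm-schauder}, and one is forced to reconcile the semigroup/resolvent representation of $R(\lambda)h$ with the PDE interpretation through an approximation argument, combining stability of the Laplace-type integral against locally uniform convergence of $h_n$ with uniform local parabolic $L^p$-bounds on $u_n$. Once this regularity step is secured, the backward-equation identity used in the first inclusion and the identification $\mathscr{G}\psi=g$ in the second become essentially bookkeeping.
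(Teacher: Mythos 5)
The paper itself gives no proof of this theorem: it is quoted verbatim from \cite[Theorem 2.8]{lorenzi-zamboni}, so there is nothing internal to compare your argument against. Judged on its own, your two-inclusion strategy is the standard and correct one for such maximal-domain characterizations, and the inclusion $\subseteq$ is essentially sound: writing $\psi=R(\lambda)(\lambda\psi-G_\infty\psi)$, approximating the datum by smooth compactly supported functions, exploiting the uniform bound $\|u_n\|_\infty\le\lambda^{-1}\sup_n\|h_n\|_\infty$ together with interior parabolic $L^p$ estimates on enlarged cylinders (where the coefficients are bounded and the operator uniformly parabolic), and passing to the weak limit is exactly how one obtains $R(\lambda)h\in\bigcap_pW^{1,2}_{p,\mathrm{loc}}$ with $\mathscr{G}R(\lambda)h=\lambda R(\lambda)h-h$ a.e. (Watch the sign: your displayed equation $D_su_n-\mathscr{A}u_n+\lambda u_n=-h_n$ is inconsistent with the correct identity $\mathscr{G}u=\lambda u-h$ that you state two lines later.)

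The inclusion $\supseteq$ has two soft spots. First, extending the identity $T(t)\psi-\psi=\int_{s-t}^sP(s,\tau)(\mathscr{G}\psi)(\tau,\cdot)\,d\tau$ from $C^\infty_c$ to a general $\psi$ in the class is the technical heart of the matter, and ``cutoff and mollification'' is not innocent here: for a mollified $\psi_\epsilon$ one controls $\mathscr{G}\psi_\epsilon\to\mathscr{G}\psi$ only in $L^p_{\rm loc}$, and since the coefficients are unbounded one must use the kernel representation \eqref{repres} together with local estimates to pass to the limit inside $P(s,\tau)$; this deserves more than a sentence. Second, and more seriously, your verification that $\psi\in D(G_\infty)$ uses the ``concrete'' definition and asserts that the integrand $\tau\mapsto(P(s,\tau)(\mathscr{G}\psi)(\tau,\cdot))(x)$ is continuous at $\tau=s$. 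That is continuity of $P(s,\tau)f$ as the \emph{initial} time $\tau$ tends to the final time $s$ for $f$ merely in $C_b$, which is precisely the property the paper warns is \emph{not} automatic under Hypothesis \ref{hyp1} alone (it is what Proposition \ref{prop-2.5} provides under the stronger Hypothesis \ref{hyp5}). You can sidestep this entirely by using the resolvent characterization of $D(G_\infty)$ instead: rewriting your identity as $T(t)\psi-\psi=\int_0^tT(r)\mathscr{G}\psi\,dr$, multiplying by $\lambda e^{-\lambda t}$, integrating in $t$ and applying Fubini gives $R(\lambda)(\lambda\psi-\mathscr{G}\psi)=\psi$, hence $\psi\in\mathrm{Range}(R(\lambda))=D(G_\infty)$ with $G_\infty\psi=\mathscr{G}\psi$, with no continuity at the diagonal required. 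I recommend restructuring the first inclusion along these lines.
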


Starting from an evolution system $\{\mu_s: s\in\R\}$ of invariant
measures of $(P(t,s))$, one can define a positive measure $\mu$ on
the $\sigma$-algebra of the Borel sets of $\R^{1+N}$ by extending
the map
\begin{equation}
\mu (A\times B) := \int_A \mu_s(B)ds,
\label{constr-meas-mu}
\end{equation}
defined on Borel sets $A\subset
\R$ and  $B\subset\R^N$.

Note that the function $s\mapsto\mu_s(B)$ is measurable. Indeed, the
remark after Proposition \ref{prop-2.5} shows that the function
$s\mapsto (P(t,s)f)(x)$ is bounded and continuous in $(-\infty,t)$,
for any $x\in\R^N$ and any $f\in C_b(\R^N)$, and Condition
\eqref{cond-mis-inv} is stronger than Hypothesis \ref{hyp5}. Hence,
the function
\begin{eqnarray*}
s\mapsto\int_{\R^N}(P(t,s)f)(x)\,\mu_t(dx),
\end{eqnarray*}
is continuous as well in $(-\infty,t)$.
Since
\begin{eqnarray*}
\mu_s(B)=\int_{\R^N}(P(t,s)\chi_B)(x)\,\mu_t(dx),
\end{eqnarray*}
and $\chi_B$ is the pointwise limit of a bounded sequence  $(f_n)\subset C_b(\R^N)$,
the measurability of the function $s\mapsto\mu_s(B)$ follows.

$\mu$ is not a probability measure since $\mu(\R^{1+N})=+\infty$. Anyway, to some extent we still can
call it an invariant measure.
Indeed,
\begin{equation}
\int_{\R^{1+N}}T(t)fd\mu=\int_{\R^{1+N}}fd\mu,\qquad\;\,t>0,
\label{invariant}
\end{equation}
for any $f\in C_c(\R;C_b(\R^N))$. Moreover,
\begin{equation}
\int_{\R^{1+N}}{\mathscr G}\varphi d\mu=0,\qquad\;\,\varphi\in
C^{1,2}_c(\R^{1+N}), \label{subinvariant}
\end{equation}
see \cite[Lemma 6.3]{kunze-lorenzi-lunardi}.

Whenever existing a solution to \eqref{subinvariant} is locally H\"older continuous.
More precisely,
\begin{theorem}[{\cite[Theorem 3.8]{BKR01}}]
Let Hypothesis $\ref{hyp1}$ be satisfied. Suppose that $\mu$ is a positive measure satisfying \eqref{subinvariant}.
Then, $\mu$ is absolutely continuous with respect to the Lebesgue measure and its density
$\varrho$ satisfies the following properties:
\begin{enumerate}[\rm (i)]
\item
$\varrho$ is locally $\gamma$-H\"older continuous in $\R^{1+N}$ for any $\gamma \in
(0,1)$ and it is everywhere positive in $\R^{1+N}$ $($the positivity of the density
follows from the Harnack inequality in
\cite[Theorem 3]{AS}$)$;
\item
the function $\varrho$ belongs to $W^{0,1}_p((-T,T)\times B_R)$ for any $1 \leq p < +\infty$ and any
$R,T>0$.
\end{enumerate}
\end{theorem}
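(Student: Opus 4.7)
The identity $\int_{\R^{1+N}}\mathscr{G}\varphi\,d\mu=0$ for every $\varphi\in C^{1,2}_c(\R^{1+N})$ says precisely that $\mu$ is a distributional solution of the adjoint parabolic equation $D_s\mu+\mathscr{A}^*\mu=0$ on $\R^{1+N}$. Since $Q$ is uniformly elliptic on each cylinder (Hypothesis \ref{hyp1}(ii)) and the coefficients are locally H\"older continuous (Hypothesis \ref{hyp1}(i)), the plan is to derive all three conclusions from standard local parabolic regularity theory applied to this equation.

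The first step, and the crux of the argument, is local absolute continuity. Fix a bounded cylinder $Q=(a,b)\times B_R$ and a smooth cutoff $\eta\in C^{1,2}_c(Q)$ with $\eta\equiv 1$ on a smaller cylinder $Q'$. For arbitrary $f\in C^\infty_c(Q')$ I would solve the non-degenerate backward Cauchy--Dirichlet problem $\mathscr{G}u=f$ in $Q$ with $u(b,\cdot)=0$ and $u=0$ on the lateral boundary, using the classical $L^p$-theory of Ladyzhenskaya--Solonnikov--Ural'tseva to get $u\in W^{1,2}_p(Q)$ with $\|u\|_{W^{1,2}_p(Q)}\le C\|f\|_{L^p(Q)}$ for every $p<+\infty$. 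Approximating $u$ by functions in $C^{1,2}_c(Q)$ and substituting $\eta u$ into \eqref{subinvariant} yields, after controlling the commutator terms arising from $\eta$, the duality bound
\begin{eqnarray*}
\left|\int_{Q'}f\,d\mu\right|\le C\|f\|_{L^p(Q)}.
\end{eqnarray*}
By density of $C^\infty_c$ in $L^p$ this forces $\mu|_{Q'}\ll dt\,dx$ with density in $L^{p/(p-1)}_{\rm loc}$; since $p$ is arbitrary, $\varrho\in L^q_{\rm loc}$ for every $q<+\infty$.

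Once $\mu=\varrho\,dt\,dx$ locally, the adjoint equation becomes a linear parabolic equation for $\varrho$. Rewriting $\mathscr{A}^*\varrho$ in divergence form (which is permitted because the coefficients $q_{ij}$ are differentiable, being H\"older continuous in $x$; if only H\"older continuity is assumed one argues with difference quotients), the interior De Giorgi--Nash--Moser estimates give that $\varrho$ is locally H\"older continuous of every exponent $\gamma\in(0,1)$, and the Aronson--Serrin Harnack inequality \cite[Theorem 3]{AS} applied to the non-negative solution $\varrho$ forces either $\varrho\equiv 0$ or $\varrho>0$ everywhere; since $\mu$ is non-trivial (Condition \eqref{cond-mis-inv} provides probability measures $\mu_s$), the latter holds.

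For the Sobolev conclusion, with $\varrho$ now locally bounded and H\"older continuous, I would apply a Caccioppoli-type argument to the equation for $\varrho$: multiply by $\eta^2\varrho$ (or the appropriate test function obtained from difference quotients in the spatial directions), integrate by parts, and use the uniform ellipticity together with Young's inequality to absorb the highest-order term. The resulting estimate gives $D_j\varrho\in L^2_{\rm loc}$, after which a bootstrap using the $L^q_{\rm loc}$ bounds already available for $\varrho$ and the smoothness of the coefficients upgrades this to $\nabla_x\varrho\in L^p_{\rm loc}$ for every $p<+\infty$, i.e. $\varrho\in W^{0,1}_p((-T,T)\times B_R)$. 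The main technical obstacle is the first step: one must choose the test function and handle the cutoff commutators carefully so that the duality constant depends only on $Q$ and not on any approximation parameter; the remaining steps are standard interior parabolic regularity.
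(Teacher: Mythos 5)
Note first that the paper does not prove this theorem: it is quoted verbatim from \cite[Theorem 3.8]{BKR01}, so your proposal has to be measured against the Bogachev--Krylov--R\"ockner argument. Your overall architecture --- a duality estimate against solutions of the non-divergence parabolic problem to get absolute continuity, followed by interior regularity for the density --- is the right one and is indeed the strategy of \cite{BKR01}. Two steps, however, contain genuine gaps as written.

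The first is in the duality step. You propose to take $u\in W^{1,2}_p(Q)$ solving $\mathscr{G}u=f$ and then ``approximate $u$ by functions in $C^{1,2}_c(Q)$'' before inserting $\eta u$ into \eqref{subinvariant}. Before absolute continuity is established, $\mu$ is an arbitrary positive Borel measure, and convergence of $D_su_n$, $D_{ij}u_n$ in $L^p(dt\,dx)$ gives no control whatsoever on $\int \mathscr{G}(\eta u_n)\,d\mu$; the limit passage presupposes exactly what you are trying to prove. The correct route is that, since $f\in C^\infty_c$ and the coefficients are locally H\"older (Hypothesis \ref{hyp1}(i)), Schauder theory makes $u$ a genuine $C^{1+\alpha/2,2+\alpha}$ function, so $\eta u$ is admissible with no approximation; the $L^p$ theory enters only quantitatively, to bound the commutator terms (which involve only $u$ and $\nabla_x u$) in sup-norm by $\|u\|_{W^{1,2}_p(Q)}\le C\|f\|_{L^p(Q)}$, and this embedding requires $p>N+2$. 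A single pass therefore yields $\varrho\in L^q_{\rm loc}$ only for $q<(N+2)/(N+1)$; the claim ``since $p$ is arbitrary, $\varrho\in L^q_{\rm loc}$ for every $q$'' silently skips the bootstrap in which, once some integrability of $\varrho$ is known, the commutator is re-estimated via H\"older's inequality against $\varrho$ so that smaller values of $p$ become admissible.

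The second, more serious, gap is the assertion that the rewriting in divergence form ``is permitted because the coefficients $q_{ij}$ are differentiable, being H\"older continuous in $x$''. H\"older continuity does not imply differentiability, and Hypothesis \ref{hyp1} gives only $q_{ij}\in C^{\alpha/2,\alpha}_{\rm loc}$. The equation satisfied by $\varrho$ is in double-divergence (adjoint) form, $D_s\varrho+\sum_{i,j}D_{ij}(q_{ij}\varrho)-\sum_jD_j(b_j\varrho)=0$, and with merely H\"older $q_{ij}$ it cannot be converted to divergence form; difference quotients do not repair this, since the obstruction is the nonexistence of $D_kq_{ij}$, not a technicality of justification. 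Consequently De Giorgi--Nash--Moser, the Aronson--Serrin Harnack inequality, and your Caccioppoli/difference-quotient argument for part (ii) --- all of which presuppose that one spatial derivative can be integrated onto $q_{ij}$ --- do not apply as stated. The regularity theory for adjoint-form equations with non-differentiable second-order coefficients is precisely the nontrivial content of \cite{BKR01}: there the Sobolev regularity of part (ii) is obtained first, again by duality (testing with spatial derivatives of solutions and transferring one derivative), and only once $\nabla_x\varrho\in L^p_{\rm loc}$ is available can the equation be given a meaningful divergence structure so that \cite[Theorem 3]{AS} yields positivity, with the H\"older continuity of part (i) then following from (ii) rather than producing it. As written, your steps 3 and 4 assume the conclusion of the hard part of the theorem.
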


We stress that the previous theorem has been proved by Bogachev, Krylov and R\"ockner under weaker assumptions than those in
Hypothesis \ref{hyp1}.

Assuming much more regularity on the coefficients of the operator ${\mathscr A}$ we can improve the regularity of the function
$\varrho$. More precisely,

\begin{theorem}[{\cite[Theorem 4.2]{lorenzi-zamboni}}]
Besides Hypotheses $\ref{hyp1}$ assume that
$q_{ij}\in C^{\alpha/2,2+\alpha}_{\rm loc}(\R^{1+N})$ and
$b_j\in C^{\alpha/2,1+\alpha}_{\rm loc}(\R^{1+N})$ for any $i,j=1,\ldots,N$.
Then, the function $\varrho$ belongs to $C^{1+\alpha/2, 2+\alpha}_{\rm loc}(\R^{1+N})$.
\end{theorem}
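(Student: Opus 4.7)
The plan is to show that $\varrho$ is a distributional solution of the formal parabolic adjoint of $\mathscr G$ and, under the enhanced regularity hypotheses, to invoke classical interior parabolic theory to bootstrap up to Schauder regularity.

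First, since $d\mu=\varrho\,ds\,dx$, identity \eqref{subinvariant} reads
\[
\int_{\R^{1+N}}(\mathscr G\varphi)\,\varrho\,ds\,dx=0,\qquad \varphi\in C^{1,2}_c(\R^{1+N}).
\]
A formal integration by parts (rigorous in the distributional sense) shows that $\varrho$ solves, distributionally in $\R^{1+N}$,
\[
D_s\varrho+\sum_{i,j=1}^{N}q_{ij}D_{ij}\varrho+\sum_{i=1}^{N}\tilde b_iD_i\varrho+c\,\varrho=0,
\]
where
\[
\tilde b_i=2\sum_{j=1}^{N}D_jq_{ij}-b_i,\qquad c=\sum_{i,j=1}^{N}D_{ij}q_{ij}-\sum_{j=1}^{N}D_jb_j.
\]
Under the assumption $q_{ij}\in C^{\alpha/2,2+\alpha}_{\rm loc}$ and $b_j\in C^{\alpha/2,1+\alpha}_{\rm loc}$ the coefficients $q_{ij}$, $\tilde b_i$ and $c$ all belong to $C^{\alpha/2,\alpha}_{\rm loc}(\R^{1+N})$, and Hypothesis \ref{hyp1}(ii) ensures that the leading part is uniformly parabolic on every compact set.

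Second, I would bootstrap the Sobolev regularity. The previous theorem already gives $\varrho\in W^{0,1}_{p,\rm loc}$ for every $p<+\infty$ and $\varrho\in C^{0,\gamma}_{\rm loc}$ for every $\gamma\in(0,1)$. Rewriting the adjoint equation as
\[
D_s\varrho+\sum_{i,j=1}^{N}q_{ij}D_{ij}\varrho=g,\qquad g:=-\sum_{i=1}^{N}\tilde b_iD_i\varrho-c\,\varrho,
\]
and noting that $\tilde b_i,c$ are locally bounded while $D_i\varrho,\varrho\in L^p_{\rm loc}$, one obtains $g\in L^p_{\rm loc}$ for every $p<+\infty$. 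The interior $W^{1,2}_p$-theory for uniformly parabolic operators with H\"older continuous leading coefficients (applied after localization by a smooth cut-off) then upgrades $\varrho$ to $W^{1,2}_{p,\rm loc}(\R^{1+N})$ for every $p<+\infty$.

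Third, choosing $p$ large, the parabolic Sobolev--Morrey embedding implies that $\varrho$ and $\nabla_x\varrho$ are locally H\"older continuous with exponents $\alpha/2$ in time and $\alpha$ in space; therefore $g\in C^{\alpha/2,\alpha}_{\rm loc}$. The classical interior Schauder estimates for uniformly parabolic equations in nondivergence form with $C^{\alpha/2,\alpha}$ coefficients and $C^{\alpha/2,\alpha}$ right-hand side (which are insensitive to the orientation of time, hence apply equally to the backward equation satisfied by $\varrho$) then yield $\varrho\in C^{1+\alpha/2,2+\alpha}_{\rm loc}(\R^{1+N})$, as required. The main technical obstacle is the rigorous execution of the first bootstrap, because the adjoint equation holds only in the sense of distributions: one either mollifies $\varrho$ by convolution, controls the commutator errors using the H\"older regularity of the coefficients, and passes to the limit, or appeals to a version of the interior $L^p$-theorem already tailored to distributional solutions. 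Once $W^{1,2}_{p,\rm loc}$-regularity has been secured, the final Schauder step is routine.
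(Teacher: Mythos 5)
Your overall strategy is the standard one and, in outline, it is correct: identify the distributional equation satisfied by $\varrho$ (the formal adjoint of ${\mathscr G}$), check that the extra hypotheses put all coefficients of the expanded nondivergence form in $C^{\alpha/2,\alpha}_{\rm loc}$, use the Bogachev--Krylov--R\"ockner regularity $\varrho\in W^{0,1}_{p,\rm loc}\cap C^{0,\gamma}_{\rm loc}$ to bootstrap to $W^{1,2}_{p,\rm loc}$, and finish with interior Schauder estimates. The survey itself gives no proof (it only cites \cite[Theorem 4.2]{lorenzi-zamboni}), so I can only assess your argument on its merits; it is consistent with how the cited result is obtained. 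Your computation of $\tilde b_i$ and $c$ is right, and the observation that interior estimates are insensitive to the time orientation of the adjoint equation is correct.

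The one point that is not yet a proof is the one you flag yourself: interior $W^{1,2}_p$ and Schauder statements are a priori estimates for functions already in $W^{1,2}_{p,\rm loc}$, whereas your $\varrho$ solves the equation only distributionally. It is worth noting that the hypotheses you are given make both of your proposed remedies actually work, and you should say so explicitly rather than leave it as an ``obstacle''. First, since $q_{ij}(s,\cdot)\in C^{2+\alpha}_{\rm loc}$, the adjoint equation can be rewritten in divergence form,
\begin{align*}
D_s\varrho+\sum_{i=1}^N D_i\Big(\sum_{j=1}^Nq_{ij}D_j\varrho+\sum_{j=1}^N(D_jq_{ij})\varrho-b_i\varrho\Big)=0,
\end{align*}
and, because $\varrho\in W^{0,1}_{p,\rm loc}$, $\varrho$ is an honest variational weak solution; one can then either invoke the $C^{1+\alpha}$-type regularity theory for divergence-form parabolic equations with $C^{\alpha/2,1+\alpha}$ coefficients to reach $\nabla_x\varrho\in C^{\alpha/2,\alpha}_{\rm loc}$ directly, or mollify in $x$ only and control the Friedrichs commutators $[q_{ij},\ast\eta_\varepsilon]D_iD_j\varrho$ using that $q_{ij}$ is locally Lipschitz in $x$ and $D_j\varrho\in L^p_{\rm loc}$. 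Second, the duality/uniqueness route (solve the Cauchy--Dirichlet problem classically in a small cylinder and identify the solution with $\varrho$) is available precisely because $q_{ij}\in C^{\alpha/2,2+\alpha}_{\rm loc}$ makes the adjoint problem classically solvable. Either way the gap closes, and the remaining Sobolev--Morrey and Schauder steps are as routine as you claim. So: correct architecture, with one technical step identified but delegated; spelling out one of the two closures above would complete the argument.
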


Using \eqref{star}, \eqref{invariant} and the density of
$C^{\infty}_c(\R^{1+N})$ into $L^p(\R^{1+N},\mu)$, it can be easily
checked that the semigroup $(T(t))$ can be extended to
$L^p(\R^{1+N},\mu)$ by a strongly continuous semigroup of
contractions, for any $p\in [1,+\infty)$, which we still denote by
$(T(t))$. Its infinitesimal generator $G_p$ turns out to extend the
operator ${\mathscr G}$ defined on $C^{\infty}_c(\R^{1+N})$.

Formula \eqref{subinvariant} shows that $\mu$ is a solution to the
equation ${\mathscr G}^*\mu=0$ in the sense of distributions, where
${\mathscr G}^*$ is the adjoint to the operator ${\mathscr G}$. We
mention that such an equation has been extensively studied in these
last years by several authors (see e.g.
\cite{BDPR08,BDPRS,BRS06,BRS08}). In all these papers the authors
are concerned with the case when the whole space $\R^{1+N}$ is
replaced by $(0,1)\times\R^N$ or, more generally, by
$(a,b)\times\R^N$ for some $a,b\in\R$ such that $a<b$ (but some of
the results in the above papers apply also to the case of the whole
of $\R^{1+N}$). They look for families of probability measures $\{\mu_s: s\in (a,b)\}$
such that the measure $\mu$ defined according to
\eqref{constr-meas-mu} satisfies the equation
${\mathscr G}^*\mu=0$ and the initial condition
\begin{eqnarray*}
\lim_{t\to a}\int_{\R^N}\zeta\, d\mu_t=\int_{\R^N}\zeta\,
d\overline\mu,
\end{eqnarray*}
holds true for any $\zeta\in C^{\infty}_c(\R^N)$ and some probability measure
$\overline\mu$.

\subsection{Characterization of the domain of the generator of the $(T_O(t))$ in $L^p(\R^{1+N},\mu)$ and an optimal regularity result}
\label{subsect-5.1}
As in the autonomous case the characterization
of the domain of $G_p$ is an hard task and, at the best of our
knowledge, this problem has been solved only in the case of the
nonautonomous Ornstein-Uhlenbeck operator, first, in
\cite{geissert-lunardi} for $p=2$ and, then, in
\cite{geissert-lorenzi-schnaubelt} in the general case. In the
previous papers the measure $\mu$ is defined through formula
\eqref{constr-meas-mu} where the family $\{\mu_s : s\in\R\}$ is
defined by \eqref{gaussian-family}.

\begin{theorem}
Let Condition \eqref{cond-U} be satisfied. Then, for any $p\in (1,+\infty)$, the operator $G_p$ has domain
\begin{align*}
D(G_p)&=\{u\in L^p(\R^{1+N},\mu): D_su,D_i u, D_{ij}u\in L^p(\R^{1+N},\mu),
\ \forall\; i,j=1,\ldots,N\}\notag\\
&=:W^{1,2}_p(\R^{1+N},\mu).
\end{align*}
Moreover, $G_pu={\mathscr G}u$ for any $u\in D(G_p)$.
\end{theorem}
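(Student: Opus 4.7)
The plan is to prove the two inclusions $W^{1,2}_p(\R^{1+N},\mu)\subseteq D(G_p)$ and $D(G_p)\subseteq W^{1,2}_p(\R^{1+N},\mu)$ separately, obtaining the identity $G_pu={\mathscr G}u$ along the way. For the first inclusion I would follow the standard core argument: $C^\infty_c(\R^{1+N})$ is dense in $L^p(\R^{1+N},\mu)$ and invariant under the resolvent $R(\lambda,G_p)$, so it forms a core; on this core $G_p$ acts as ${\mathscr G}$ by virtue of \eqref{subinvariant} and the strong continuity of $(T(t))$ in $L^p(\R^{1+N},\mu)$. Since ${\mathscr G}:W^{1,2}_p(\R^{1+N},\mu)\to L^p(\R^{1+N},\mu)$ is bounded (the $x_j$-growth of the drift is absorbed by the Gaussian tails of $\mu_s$ in \eqref{gaussian-family} via integration by parts against $\nabla\rho$), the closedness of $G_p$ extends this to all of $W^{1,2}_p(\R^{1+N},\mu)$.

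The main work lies in the opposite inclusion. Given $u\in D(G_p)$, I would write $u=R(\lambda,G_p)f$ with $f=\lambda u-G_pu\in L^p(\R^{1+N},\mu)$ for some $\lambda$ large, and use $R(\lambda,G_p)f=\int_0^{+\infty}e^{-\lambda t}T(t)f\,dt$. The spatial derivatives of $u$ are then controlled provided one has $L^p(\R^{1+N},\mu)$-operator bounds on the derivatives of $T(t)$ that are integrable against $e^{-\lambda t}$ near $t=0$. Condition \eqref{cond-U} implies that Hypothesis \ref{hyp10} holds for the nonautonomous Ornstein-Uhlenbeck operator with an $\ell_p<0$, so the pointwise estimate \eqref{grad-punt-1} gives
\begin{equation*}
|(\nabla_xP_O(t,s)f)(x)|^p\le C_p(t-s)^{-p/2}e^{p\ell_p(t-s)}(P_O(t,s)|f|^p)(x).
\end{equation*}
Integrating against $\mu_t$ and invoking the invariance \eqref{esim} yields the $L^p(\R^{1+N},\mu)$-bound $\|\nabla_xT(t)f\|_{L^p(\mu)}\le C_pt^{-1/2}e^{\ell_pt}\|f\|_{L^p(\mu)}$, which is integrable against $e^{-\lambda t}\,dt$ on $(0,+\infty)$ and hence gives $D_iR(\lambda,G_p)f\in L^p(\R^{1+N},\mu)$ for $i=1,\ldots,N$.

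For the second-order spatial derivatives I would iterate the gradient estimate, writing $P_O(t,s)f=P_O(t,(t+s)/2)P_O((t+s)/2,s)f$ and applying \eqref{grad-punt-1} twice; alternatively, one may differentiate the explicit Gaussian kernel in \eqref{nonaut-OU} directly, exploiting that $D^2_x$ of the kernel is computable and its $L^p(\mu_t)$-norm behaves like $(t-s)^{-1}e^{\tilde\ell_p(t-s)}$ with $\tilde\ell_p<0$. This produces $\|D^2_xT(t)f\|_{L^p(\mu)}\le Ct^{-1}e^{\tilde\ell_pt}\|f\|_{L^p(\mu)}$, integrable against $e^{-\lambda t}$, hence $D_{ij}R(\lambda,G_p)f\in L^p(\R^{1+N},\mu)$. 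With the first- and second-order spatial derivatives of $u$ in $L^p(\R^{1+N},\mu)$, the Gaussian decay of $\mu_s$ absorbs the linear growth of the drift, so ${\mathscr A}_Ou\in L^p(\R^{1+N},\mu)$; then $D_su={\mathscr A}_Ou-G_pu\in L^p(\R^{1+N},\mu)$, proving $u\in W^{1,2}_p(\R^{1+N},\mu)$.

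The main obstacle is the $L^p$-second-order estimate with the sharp singularity $t^{-1}$: the pointwise bound for $|D^2_xP_O(t,s)f|$ has to be established in a form compatible with the invariance \eqref{esim}, so that after integration against $\mu_t$ no extra $s$-dependent constants appear and the resulting estimate is uniform in $s$. This is where the explicit Gaussian representation \eqref{nonaut-OU} together with condition \eqref{cond-U} is essential, since it allows one to differentiate under the integral sign and to control all the $Q_{t,s}^{-1}$-factors uniformly in $s\in\R$. Once this uniform-in-$s$ bound is in place, the remainder of the argument is a routine closedness/density manipulation, and the identity $G_pu={\mathscr G}u$ follows by passing to the limit from the core.
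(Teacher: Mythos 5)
Your strategy for the second-order spatial derivatives does not work: the estimate $\|D^2_xT(t)f\|_{L^p(\R^{1+N},\mu)}\le C\,t^{-1}e^{\tilde\ell_p t}\|f\|_{L^p(\R^{1+N},\mu)}$ is the correct (and sharp) smoothing rate, but $t^{-1}$ is \emph{not} integrable against $e^{-\lambda t}\,dt$ near $t=0$, since $\int_0^1 t^{-1}\,dt=+\infty$. Plugging this bound into $R(\lambda,G_p)f=\int_0^{+\infty}e^{-\lambda t}T(t)f\,dt$ therefore gives a divergent majorant and no information on $D_{ij}R(\lambda,G_p)f$. This is not a technicality that a sharper constant can fix: already for the heat semigroup on $\R^N$ one has $\|D^2e^{t\Delta}f\|_p\sim t^{-1}\|f\|_p$, and the fact that $(\lambda-\Delta)^{-1}$ maps $L^p$ into $W^{2,p}$ is a Calder\'on--Zygmund theorem, not a consequence of integrating the semigroup bound. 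The same obstruction is exactly why the survey describes the characterization of $D(G_p)$ as a hard problem solved only in special cases. The actual proofs (Geissert--Lunardi for $p=2$, via a variational argument building on the known characterization of the domain of the autonomous Ornstein--Uhlenbeck operator; Geissert--Lorenzi--Schnaubelt for general $p\in(1,+\infty)$, via maximal $L^p$-regularity techniques) replace your integration step with genuine harmonic-analytic a priori estimates of the form $\|D_{ij}u\|_{L^p(\mu)}\le C(\|{\mathscr G}u\|_{L^p(\mu)}+\|u\|_{L^p(\mu)})$ on a core. Your first-derivative step, by contrast, is sound, because $t^{-1/2}$ is locally integrable.

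A second, smaller gap: both inclusions require the weighted estimate $\|\,|x|\,|\nabla_xu|\,\|_{L^p(\R^{1+N},\mu)}\le C\|u\|_{W^{1,2}_p(\R^{1+N},\mu)}$ in order to conclude that the drift term $\sum_{i,j}b_{ij}(s)x_jD_iu$ belongs to $L^p(\R^{1+N},\mu)$ (you need it to show ${\mathscr G}$ maps $W^{1,2}_p$ into $L^p$, and again to recover $D_su={\mathscr A}_Ou-G_pu$). You assert this follows from ``integration by parts against $\nabla\varrho$,'' but for Gaussian measures this is itself a nontrivial weighted inequality that forms part of the domain characterization in the autonomous case; it cannot be taken for granted. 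Until you supply a proof of the second-order a priori estimate that does not pass through the divergent integral, and of the weighted gradient bound, the inclusion $D(G_p)\subseteq W^{1,2}_p(\R^{1+N},\mu)$ remains unproved.
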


The characterization of the domain of $G_p$ can be rephrased into an optimal regularity result for the equation
\begin{align}\label{ou-line}
D_s u(s,\cdot)= ({\mathscr A}_O-\lambda)u(s,\cdot) + f(s,\cdot), \qquad
s\in\R,\;\,\lambda>0,
\end{align}
i.e., if $f\in L^p(\R^{1+N},\mu)$, Equation \eqref{ou-line} admits a
unique solution $u$, which belongs to $W^{1,2}_p(\R^{1+N},\mu)$.

In the case $p=2$, the characterization of $D(G_2)$ is the keystone
to prove the following optimal regularity result for the Cauchy
problem
\begin{equation}
\left\{
\begin{array}{lll}
D_su(s,x)=({\mathscr A}_Ou)(s,x)+g(s,x), &s\in (T_1,T_2), &
x\in\R^N,\\[1mm]
u(T_1,x)=f(x),
\end{array}
\right.
\label{nonaut-Lp}
\end{equation}
in $L^p$-spaces.
More precisely,

\begin{theorem}[{\cite[Theorem 1.3]{geissert-lunardi}}]
\label{thm-optimal-L2}
Fix $T_1,T_2\in\R$ such that $T_1<T_2$, $f\in W^{1,2}(\R^N,\mu_{T_1})$
and $g\in L^2((T_1,T_2)\times\R^N,\mu)$. Then, the Cauchy problem \eqref{nonaut-Lp} admits a
unique solution $u\in W^{1,2}_2((T_1,T_2)\times\R^N,\mu)$. Moreover, there exists a positive constant $C$, independent of
$f$ and $g$, such that
\begin{eqnarray*}
\|u\|_{W^{1,2}_2((T_1,T_2)\times\R^N,\mu)}\le C\left (\|f\|_{W^{1,2}(\R^N,\mu_{T_1})}+\|g\|_{L^2((T_1,T_2)\times\R^N,\mu)}\right ).
\end{eqnarray*}
\end{theorem}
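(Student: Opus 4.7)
The plan is to lift the Cauchy problem to a resolvent equation for $G_2$ on all of $\R^{1+N}$, so that the characterization $D(G_2)=W^{1,2}_2(\R^{1+N},\mu)$ can be applied directly. First I would reduce to the case $f=0$ by setting $v:=u-P(\cdot,T_1)f$, so that $v$ solves the same PDE with zero initial datum and the same forcing $g$. That this reduction is legitimate rests on verifying that $P(\cdot,T_1)f\in W^{1,2}_2((T_1,T_2)\times\R^N,\mu)$ whenever $f\in W^{1,2}(\R^N,\mu_{T_1})$: from \eqref{nonaut-OU} one sees that the spatial derivatives of $P(s,T_1)f$ are linear combinations of $P(s,T_1)$ applied to derivatives of $f$, composed with $U(T_1,s)$; Condition \eqref{cond-U}, the invariance $\int_{\R^N}P(s,T_1)h\,d\mu_s=\int_{\R^N}h\,d\mu_{T_1}$, and Jensen's inequality then control each $L^2(\mu_s)$-norm uniformly in $s\in(T_1,T_2)$, while $D_sP(s,T_1)f=\mathscr{A}_O P(s,T_1)f$ automatically belongs to $L^2$.

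Next, for $f=0$ I would extend $g$ by zero outside $(T_1,T_2)\times\R^N$ to obtain $\tilde g\in L^2(\R^{1+N},\mu)$ and, for $\lambda>0$, define $u_\lambda:=(\lambda I-G_2)^{-1}\tilde g$. By the characterization $D(G_2)=W^{1,2}_2(\R^{1+N},\mu)$ one has $u_\lambda\in W^{1,2}_2$ and $\lambda u_\lambda+D_su_\lambda-\mathscr{A}_Ou_\lambda=\tilde g$ a.e., while the Laplace transform identity $u_\lambda=\int_0^{+\infty}e^{-\lambda t}T(t)\tilde g\,dt$ forces $u_\lambda(s,\cdot)=0$ for $s\le T_1$ and yields, on the strip,
\begin{align*}
u_\lambda(s,x)=\int_{T_1}^s e^{-\lambda(s-r)}(P(s,r)g(r,\cdot))(x)\,dr.
\end{align*}
Jensen's inequality together with the $L^2(\mu)$-contractivity of $T(t)$ and the invariance $\int T(t)h\,d\mu=\int h\,d\mu$ give the bound $\|u_\lambda\|_{L^2(J\times\R^N,\mu)}\le C_J\|g\|_{L^2((T_1,T_2)\times\R^N,\mu)}$ for every bounded interval $J$, uniformly in $\lambda\in(0,1]$. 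Picking a smooth cutoff $\chi(s)$ equal to $1$ on $[T_1,T_2]$ and compactly supported in a slightly larger interval, the identity $G_2(\chi u_\lambda)=\chi(\lambda u_\lambda-\tilde g)-\chi'u_\lambda$ and the domain characterization upgrade the uniform $L^2$ bounds on $\chi u_\lambda$ and on $G_2(\chi u_\lambda)$ into a uniform $W^{1,2}_2(\R^{1+N},\mu)$ bound on $\chi u_\lambda$, hence on $u_\lambda|_{(T_1,T_2)\times\R^N}$.

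Extracting a weakly convergent subsequence in $W^{1,2}_2((T_1,T_2)\times\R^N,\mu)$, the limit $v$ solves $D_sv-\mathscr{A}_Ov=g$ a.e. on the strip; the initial condition $v(T_1,\cdot)=0$ is inherited from $u_\lambda(s,\cdot)\equiv 0$ for $s\le T_1$ via the trace at $s=T_1$ available for $W^{1,2}_2$-functions (since $D_sv\in L^2$ forces $v\in C([T_1,T_2];L^2(\R^N,\mu_s))$ in a suitable sense). Uniqueness is then obtained by an energy estimate applied to the difference of two solutions, integrating $(D_sw-\mathscr{A}_Ow)w\equiv 0$ against $d\mu_s(x)\,ds$ and using both integration by parts on $W^{1,2}_2$ and the Fokker--Planck identity satisfied by the Gaussian densities of $\mu_s$.

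The main obstacle is producing the uniform $W^{1,2}_2$-bound of the third step: the naive estimate $\|u_\lambda\|_{L^2(\mu)}\le\lambda^{-1}\|\tilde g\|_{L^2(\mu)}$ coming from contractivity of $(T(t))$ blows up as $\lambda\to 0^+$, and $u_\lambda$ decays only slowly for $s\to+\infty$, so a genuine global bound on $\R^{1+N}$ is unavailable. The cutoff trick confines the problem to a bounded $s$-interval on which the integral representation gives a $\lambda$-uniform $L^2$-estimate, and only then does the characterization $D(G_2)=W^{1,2}_2$ allow one to pass from an $L^2$-bound on both $\chi u_\lambda$ and $G_2(\chi u_\lambda)$ to the required $W^{1,2}_2$-bound on the strip.
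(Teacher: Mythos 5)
Your overall architecture is the right one and matches what the survey indicates: the characterization $D(G_2)=W^{1,2}_2(\R^{1+N},\mu)$ is indeed the keystone, and your treatment of the case $f=0$ is essentially sound. Extending $g$ by zero, setting $u_\lambda=(\lambda I-G_2)^{-1}\tilde g$, using the Laplace-transform representation to see that $u_\lambda$ vanishes for $s\le T_1$ and to get a $\lambda$-uniform local $L^2(\mu)$ bound from the contractivity $\|P(s,r)h\|_{L^2(\R^N,\mu_s)}\le\|h\|_{L^2(\R^N,\mu_r)}$, then localizing in $s$ with a cutoff and invoking the equivalence of the graph norm of $G_2$ with the $W^{1,2}_2$-norm, is a clean way to get the a priori bound; the weak-limit passage and the energy/carr\'e-du-champ uniqueness argument (using $\frac{d}{ds}\int_{\R^N}w^2d\mu_s=-2\int_{\R^N}\langle Q\nabla_xw,\nabla_xw\rangle d\mu_s$ for the homogeneous equation) also go through.

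The genuine gap is in the reduction to $f=0$. You assert that $P(\cdot,T_1)f\in W^{1,2}_2((T_1,T_2)\times\R^N,\mu)$ for $f\in W^{1,2}(\R^N,\mu_{T_1})$ because the first derivatives are controlled via \eqref{nonaut-OU} and \eqref{cond-U}, ``while $D_sP(s,T_1)f={\mathscr A}_OP(s,T_1)f$ automatically belongs to $L^2$.'' This is not automatic, and the pointwise-in-$s$ strategy you describe cannot prove it: for $f$ with only one derivative in $L^2$, the second-order spatial derivatives obey at best $\|D^2_xP_O(s,T_1)f\|_{L^2(\R^N,\mu_s)}\le C(s-T_1)^{-1/2}\|f\|_{W^{1,2}(\R^N,\mu_{T_1})}$, and $(s-T_1)^{-1}$ is not integrable near $s=T_1$, so no uniform-in-$s$ (nor naively squared-and-integrated) bound yields $D^2_xP_O(\cdot,T_1)f\in L^2((T_1,T_2)\times\R^N,\mu)$; the same applies to $D_sP_O(\cdot,T_1)f$. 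What is actually needed is the square-function (trace-space) estimate
\begin{equation*}
\int_{T_1}^{T_2}\|{\mathscr A}_OP_O(s,T_1)f\|^2_{L^2(\R^N,\mu_s)}\,ds\le C\|f\|^2_{W^{1,2}(\R^N,\mu_{T_1})},
\end{equation*}
which rests on identifying $W^{1,2}(\R^N,\mu_{T_1})$ with the real interpolation space $(L^2(\R^N,\mu_{T_1}),W^{2,2}(\R^N,\mu_{T_1}))_{1/2,2}$ and is a nontrivial ingredient of the Geissert--Lunardi argument, not a consequence of \eqref{nonaut-OU} alone. Without this, neither the existence of a $W^{1,2}_2$-solution for $f\neq0$ nor the term $\|f\|_{W^{1,2}(\R^N,\mu_{T_1})}$ in the final estimate is justified.
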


The argument in the proof of the previous theorem cannot be straightforwardly extended to the case $p\neq 2$. Hence,
extending Theorem \ref{thm-optimal-L2} to the general case $p\neq 2$ is still an open problem.

\subsection{Cores of $G_p$}
For more general operators only some partial characterization of
$D(G_p)$ is known. In the case when the pointwise gradient Estimates
\eqref{grad-punt} are satisfied the following result holds true.

\begin{theorem}[{\cite[Theorem 3.4]{lorenzi-zamboni}}]
\label{prop-part-car} Suppose that $\ell_p$ is finite $($see
\eqref{ellp}$)$. Then, $D(G_p)$ is continuously embedded into
$W^{0,1}_p(\R^{1+N},\mu)=\{u\in L^p(\R^{1+N},\mu): \nabla_xu\in
(L^p(\R^{1+N},\mu))^N\}$  and there exist two positive constants
$C=C(p)$ and $\lambda_0=\lambda_0(p)$ such that
\begin{equation}
\|\,|\nabla_xu|\,\|_{L^p(\R^{1+N},\mu)}\le
C\|u\|_{L^p(\R^{1+N},\mu)}^{\frac{1}{2}} \|\lambda_0
u-G_pu\|_{L^p(\R^N,\mu)}^{\frac{1}{2}},
\label{stima-J12}
\end{equation}
for any $u\in D(G_p)$. If $\ell_p<0$, then Estimate
\eqref{stima-J12} holds true with $\lambda_0=0$.
\end{theorem}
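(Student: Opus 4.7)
The plan is to first lift the pointwise gradient estimates \eqref{grad-punt}--\eqref{grad-punt-1} to $L^p(\R^{1+N},\mu)$-operator bounds on the evolution semigroup $(T(t))$, and then to apply a resolvent-splitting argument that produces the square-root interpolation.

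From the definition \eqref{sem-evol} and the pointwise estimate \eqref{grad-punt-1} one has
\begin{eqnarray*}
|\nabla_x T(t)f|^p\le C_p\max\{t^{-p/2},1\}e^{p\ell_p t}T(t)|f|^p,
\end{eqnarray*}
pointwise in $\R^{1+N}$. Integrating against $\mu$ and invoking the invariance \eqref{invariant} applied to $|f|^p$ yields the operator estimate
\begin{eqnarray*}
\|\,|\nabla_x T(t)f|\,\|_{L^p(\mu)}\le C_p^{1/p}\max\{t^{-1/2},1\}e^{\ell_p t}\|f\|_{L^p(\mu)},
\end{eqnarray*}
first for $f\in C_b(\R^{1+N})\cap L^p(\R^{1+N},\mu)$ and then, by density, for every $f\in L^p(\R^{1+N},\mu)$.

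Fix now $\lambda_0>\max\{\ell_p,0\}+1$ and, for $u\in D(G_p)$, set $f:=\lambda_0u-G_pu$, so that $u=R(\lambda_0)f=\int_0^{+\infty}e^{-\lambda_0t}T(t)f\,dt$. For $\tau\in(0,1]$ I split the integral at $\tau$ and use the commutation $T(\tau)R(\lambda_0)=R(\lambda_0)T(\tau)$ to write
\begin{eqnarray*}
u=\int_0^{\tau}e^{-\lambda_0t}T(t)f\,dt+e^{-\lambda_0\tau}T(\tau)u.
\end{eqnarray*}
Since the map $t\mapsto\nabla_xT(t)f$ is Bochner-integrable into $(L^p(\mu))^N$ on $(0,\tau]$ (its $L^p$-norm is dominated by an integrable singularity $t^{-1/2}$), I differentiate under the integral, take $L^p$-norms, and insert the bound of the previous step to obtain
\begin{eqnarray*}
\|\nabla_xu\|_p\le C\sqrt{\tau}\,\|\lambda_0u-G_pu\|_p+C\tau^{-1/2}\|u\|_p,
\end{eqnarray*}
for some constant $C$ independent of $u$ and $\tau\in(0,1]$.

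The remaining step is to balance the two terms. Choosing $\tau=\|u\|_p/\|\lambda_0u-G_pu\|_p$ and observing that the contraction bound $\|u\|_p\le\lambda_0^{-1}\|\lambda_0u-G_pu\|_p$ forces $\tau\le\lambda_0^{-1}\le 1$, the choice is admissible and \eqref{stima-J12} follows at once. For the improved case $\ell_p<0$ with $\lambda_0=0$, I would instead start from the identity $T(t)u-u=\int_0^tT(s)G_pu\,ds$, valid for $u$ in the core $C^{\infty}_c(\R^{1+N})$; the convergence of $\int_0^{+\infty}s^{-1/2}e^{\ell_ps}\,ds$ allows the same balancing to go through without the need of a spectral shift. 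The continuous embedding $D(G_p)\hookrightarrow W^{0,1}_p(\R^{1+N},\mu)$ is an immediate consequence via $2\sqrt{ab}\le a+b$. I expect the main technical hurdle to be the differentiation-under-the-integral step, which requires upgrading the pointwise bound on $\nabla_xT(t)f$ to a genuine $(L^p(\mu))^N$-valued identity through a density argument based on the core property of $C^{\infty}_c(\R^{1+N})$ for $G_p$.
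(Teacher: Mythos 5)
The survey states this theorem without proof, simply quoting \cite[Theorem 3.4]{lorenzi-zamboni}, so there is no in-paper argument to compare against; your proposal is the standard argument for results of this type and is essentially correct. The chain (pointwise estimate \eqref{grad-punt-1} $\to$ $L^p(\mu)$-operator bound on $\nabla_xT(t)$ via the invariance of $\mu$ $\to$ resolvent splitting at $\tau$ $\to$ optimization in $\tau$) is precisely the mechanism of the cited reference, and your first step mirrors the computation the survey itself performs for $\mu^{\sharp}$ in Section \ref{sect-7}. Two points deserve care. First, under the standing hypotheses $C^{\infty}_c(\R^{1+N})$ is \emph{not} known to be a core of $G_p$ (that requires the extra assumptions of Theorem \ref{core non periodic}); the available core is $D_{\rm comp}({\mathscr G})$ of Theorem \ref{prop-core}. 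But you do not actually need a core for the step you flag: density of $C_b(\R^{1+N})\cap L^p(\R^{1+N},\mu)$ in $L^p(\R^{1+N},\mu)$, together with the closedness of the distributional gradient on $L^p_{\rm loc}$ (the density of $\mu$ is continuous and locally bounded away from zero), suffices both to extend the bound on $\nabla_xT(t)$ to all of $L^p(\R^{1+N},\mu)$ and to justify differentiation under the integral via Hille's theorem. Second, the case $\ell_p<0$, $\lambda_0=0$ is too brisk: writing $\nabla_xu=\nabla_xT(\tau)u-\int_0^{\tau}\nabla_xT(s)G_pu\,ds$ and balancing requires a case distinction. If $\|u\|_p\le\|G_pu\|_p$ the choice $\tau=\|u\|_p/\|G_pu\|_p\le 1$ works as before; if $\|u\|_p>\|G_pu\|_p$ one must instead let $\tau\to+\infty$, using $e^{\ell_p\tau}\to 0$ to eliminate the term $\nabla_xT(\tau)u$ and then $\|G_pu\|_p\le\|G_pu\|_p^{1/2}\|u\|_p^{1/2}$ to reach \eqref{stima-J12}. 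With these repairs the proof is complete.
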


Theorem \ref{prop-part-car} can be rephrased saying that
$W^{0,1}_p(\R^{1+N},\mu)$ belongs to the class $J_{1/2}$ between
$L^p(\R^{1+N},\mu))$ and $D(G_p)$.

Due to the difficulty in characterizing the domain of $G_p$, it
turns out to be extremely important to determine suitable cores for
the operator $G_p$, in order to deal with such an operator. Some
positive answers to this problem have been given in
\cite{lorenzi-zamboni}.  More precisely,

\begin{theorem}[{\cite[Theorem 2.1]{lorenzi-zamboni}}]
\label{prop-core} Let Hypotheses $\ref{hyp1}$ and Condition
$\ref{cond-mis-inv}$ be satisfied. Then, the set
\begin{align*}
D_{\rm comp}({\mathscr G})=&\bigg\{
\psi\in C_b(\R^{1+N})\cap W^{1,2}_p((-R,R)\times B_R)~{\rm for~any}~R>0,\,p<+\infty:\nonumber\\
&\;\;\;\;
{\mathscr G}\psi\in C_b(\R^{1+N}),\;\,{\rm supp}(\psi)\subset [-M,M]\times\R^N,~\mbox{for some } M>0
\bigg\},
\end{align*}
is a core for the operator $G_p$ for any $p\in [1,+\infty)$.
\end{theorem}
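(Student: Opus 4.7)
My plan is to show first that $D_{\rm comp}(\mathscr G)\subset D(G_p)$ with $G_p\psi=\mathscr G\psi$ there, and second that this set is dense in $D(G_p)$ with respect to the graph norm of $G_p$; since $G_p$ generates a strongly continuous semigroup on $L^p(\R^{1+N},\mu)$ for every $p\in[1,+\infty)$, this density is equivalent to $D_{\rm comp}(\mathscr G)$ being a core. For the density I will use the resolvent reduction: because $C^\infty_c(\R^{1+N})$ is dense in $L^p(\R^{1+N},\mu)$ and, for $\lambda$ in the resolvent set, $R(\lambda,G_p):L^p(\R^{1+N},\mu)\to(D(G_p),\|\cdot\|_{G_p})$ is an isomorphism, it suffices to approximate, in graph norm, every $u$ of the form $u=R(\lambda,G_p)f$, with $f\in C^\infty_c(\R^{1+N})$, by elements of $D_{\rm comp}(\mathscr G)$.

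For the inclusion, fix $\psi\in D_{\rm comp}(\mathscr G)$ with ${\rm supp}\,\psi\subset[-M,M]\times\R^N$. The characterization of $D(G_\infty)$ recalled earlier gives $\psi\in D(G_\infty)$, so that $(T(t)\psi-\psi)/t\to\mathscr G\psi$ pointwise as $t\to 0^+$, together with a uniform bound $\sup_{t\in(0,1]}\|(T(t)\psi-\psi)/t\|_\infty<+\infty$. Since $(T(t)\psi)(s,x)=(P(s,s-t)\psi(s-t,\cdot))(x)$, the $s$-support of this difference quotient stays in $[-M,M+1]$ for $t\in(0,1]$, so the quotient is dominated by a constant multiple of $\chi_{[-M,M+1]\times\R^N}$, which belongs to $L^p(\R^{1+N},\mu)$ because $\mu([-M,M+1]\times\R^N)=2M+1<+\infty$. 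Dominated convergence then upgrades the pointwise limit to $L^p(\R^{1+N},\mu)$-convergence, yielding $\psi\in D(G_p)$ and $G_p\psi=\mathscr G\psi$.

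For the density, pick $f\in C^\infty_c(\R^{1+N})\subset C_b(\R^{1+N})\cap L^p(\R^{1+N},\mu)$ and set $u=R(\lambda,G_p)f$. Since the $L^p$-semigroup extends the $C_b$-one and both are given pointwise by the same formula, Fubini (applied to the Bochner integral defining $R(\lambda,G_p)f$) identifies $u$ with $R(\lambda,G_\infty)f$ as functions; the explicit description of $D(G_\infty)$ then produces a representative in $C_b(\R^{1+N})\cap\bigcap_{q<+\infty}W^{1,2}_q((-R,R)\times B_R)$ satisfying $\mathscr G u=\lambda u-f\in C_b(\R^{1+N})$. Choose $\phi_n\in C^\infty_c(\R)$ with $\phi_n\equiv 1$ on $[-n,n]$, ${\rm supp}\,\phi_n\subset[-n-1,n+1]$, and $|\phi_n'|\le C$ uniformly in $n$, and set $\psi_n(s,x):=\phi_n(s)u(s,x)$. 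A direct computation shows $\psi_n\in D_{\rm comp}(\mathscr G)$ and $\mathscr G\psi_n=\phi_n\mathscr G u-\phi_n'u$. Since $|\psi_n|\le|u|\in L^p(\R^{1+N},\mu)$ and $|\mathscr G\psi_n|\le|\mathscr G u|+C|u|\in L^p(\R^{1+N},\mu)$, with $\psi_n\to u$ and $\phi_n'u\to 0$ pointwise almost everywhere (as $\phi_n'$ is supported in $\{n<|s|<n+1\}$), dominated convergence gives both $\psi_n\to u$ and $G_p\psi_n=\mathscr G\psi_n\to G_p u$ in $L^p(\R^{1+N},\mu)$.

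The main obstacle I anticipate is the clean identification of the two resolvent pictures, namely the proof that $R(\lambda,G_p)f=R(\lambda,G_\infty)f$ as functions for $f\in C^\infty_c(\R^{1+N})$, so that the $L^p$-equivalence class of $u$ is represented precisely by the bounded continuous function supplied by the $C_b$-theory; this step requires comparing a Bochner integral in $L^p(\R^{1+N},\mu)$ with a pointwise/bounded integral in $C_b(\R^{1+N})$. Once this bridge between the two frameworks is established, and combined with the explicit description of $D(G_\infty)$ proved earlier, the remaining steps are a routine time-cutoff-and-dominated-convergence argument.
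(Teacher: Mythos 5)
Your argument is correct and follows the standard route (the survey only quotes this statement from \cite[Theorem 2.1]{lorenzi-zamboni} without reproducing a proof, and your proof uses exactly the machinery the survey sets up): the inclusion $D_{\rm comp}({\mathscr G})\subset D(G_p)$ via the characterization \eqref{DGinfty}, the finite $\mu$-measure of the slabs $[-M,M+1]\times\R^N$ and dominated convergence, and graph-norm density via the identification $R(\lambda,G_p)f=R(\lambda,G_\infty)f$ for $f\in C^\infty_c(\R^{1+N})$ followed by a time cutoff. The one step you rightly flag, the consistency of the two resolvents, is routine: test the Bochner integral against compactly supported elements of $L^{p'}(\R^{1+N},\mu)$ and apply Fubini, using that the $L^p$-semigroup is by construction represented on $C_c(\R;C_b(\R^N))$ by the same pointwise formula as the $C_b$-semigroup.
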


Under stronger assumptions, $C^{\infty}_c(\R^{1+N})$ is a core of $(T(t))$.
More specifically,

\begin{theorem}[{\cite[Theorem 4.1]{lorenzi-zamboni}}]
\label{core non periodic} Let Hypotheses $\ref{hyp1}(ii)$-$(iii)$ be
satisfied. Further, let the coefficients $q_{ij}$ and $b_j$
$(i,j=1,\ldots,N)$ belong to $C^{\alpha/2,2+\alpha}_{\rm
loc}(\R^{1+N})$ and to $C^{\alpha/2,1+\alpha}_{\rm loc}(\R^{1+N})$,
respectively, for some $\alpha\in (0,1)$. Fix $p\in (1,+\infty)$ and
assume that there exist a strictly positive function $V \in
C^2(\R^N)$ blowing up as $|x|\to +\infty$, and a constant $c>0$ such
that the functions
\begin{eqnarray*}
(s,x)\mapsto e^{-c|s|}\frac{({\mathscr A} V)(s,x)}{V(x) \log
V(x)}\qquad (s,x)\mapsto e^{-c|s|}\frac{\langle Q(s,x) \nabla V(x), \nabla V(x)
\rangle}{(V(x))^2 \log V(x)},
\end{eqnarray*}
belong to $L^p(\R^{1+N},\mu)$. Then,
$C_c^{\infty}(\R^{1+N})$ is a core for the operator $G_p$.
\end{theorem}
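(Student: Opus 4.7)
The plan is to leverage Theorem \ref{prop-core}, which already identifies $D_{\rm comp}(\mathscr{G})$ as a core for $G_p$, and to approximate each $\psi\in D_{\rm comp}(\mathscr{G})$ in the graph norm of $G_p$ by a sequence in $C_c^\infty(\R^{1+N})$. I would construct such a sequence in two stages: first a spatial cut-off tailored to the Lyapunov function $V$, and then a standard mollification.

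\emph{Stage 1 (spatial cut-off).} Fix $\phi\in C_c^\infty(\R)$ with $\phi\equiv 1$ on $[-1,1]$ and $\mathrm{supp}\,\phi\subset[-2,2]$, and set $\eta_n(x):=\phi(n^{-1}\log V(x))$. Since $V\in C^2(\R^N)$ blows up at infinity, each $\eta_n\in C_c^2(\R^N)$ with support contained in the compact set $\{V\le e^{2n}\}$, so $\eta_n\psi$ is compactly supported in $\R^{1+N}$. A product-rule calculation gives
\[
\mathscr{G}(\eta_n\psi)-\eta_n\mathscr{G}\psi=\psi\,\mathscr{A}\eta_n+2\langle Q\nabla\eta_n,\nabla_x\psi\rangle,
\]
\[
\mathscr{A}\eta_n=\phi''(\tfrac{\log V}{n})\,\frac{\langle Q\nabla V,\nabla V\rangle}{n^2V^2}+\phi'(\tfrac{\log V}{n})\Big(\frac{\mathscr{A}V}{nV}-\frac{\langle Q\nabla V,\nabla V\rangle}{nV^2}\Big).
\]
On $\mathrm{supp}\,\phi'(\log V/n)\cup\mathrm{supp}\,\phi''(\log V/n)$ one has $n\le\log V\le 2n$, yielding $1/(nV)\le 2/(V\log V)$ and $1/(n^2V^2)\le 4/(V^2\log V)\cdot (1/\log V)$, and hence the pointwise estimate
\[
|\psi\,\mathscr{A}\eta_n|\le C\,\chi_{\{V\ge e^n\}}\,\chi_{[-M,M]}(s)\Big(\frac{\langle Q\nabla V,\nabla V\rangle}{V^2\log V}+\frac{|\mathscr{A}V|}{V\log V}\Big),
\]
where $[-M,M]$ is the $s$-support of $\psi$. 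Since $e^{-c|s|}\ge e^{-cM}$ on this slab, the hypothesis on $V$ provides an $L^p(\mu)$ majorant independent of $n$; as the shell $\{V\ge e^n\}$ shrinks to the empty set, dominated convergence yields $\psi\,\mathscr{A}\eta_n\to 0$ in $L^p(\mu)$. The cross-term is treated by the Cauchy--Schwarz estimate $|\langle Q\nabla\eta_n,\nabla_x\psi\rangle|\le \langle Q\nabla\eta_n,\nabla\eta_n\rangle^{1/2}\langle Q\nabla_x\psi,\nabla_x\psi\rangle^{1/2}$ combined with the shell bound $\langle Q\nabla\eta_n,\nabla\eta_n\rangle\le C\,\chi_{\{V\ge e^n\}}\,\langle Q\nabla V,\nabla V\rangle/(V\log V)^2$ and the $L^p(\mu)$-integrability of $\langle Q\nabla_x\psi,\nabla_x\psi\rangle^{1/2}\chi_{[-M,M]}$, the latter following from the enhanced H\"older regularity of the coefficients (via interior Schauder estimates applied to $\mathscr{G}\psi\in C_b(\R^{1+N})$) and the finiteness of $\mu_s(\R^N)=1$ on the bounded $s$-slab. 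Together with $\eta_n\psi\to\psi$ in $L^p(\mu)$ by a further dominated convergence, this shows $\eta_n\psi\to\psi$ in the graph norm of $G_p$.

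\emph{Stage 2 (mollification).} For fixed $n$, $\eta_n\psi$ is compactly supported and lies in $W^{1,2}_p(\R^{1+N})$. Let $\rho_\varepsilon$ be standard mollifiers on $\R^{1+N}$ and set $\psi_{n,\varepsilon}:=\rho_\varepsilon*(\eta_n\psi)\in C_c^\infty(\R^{1+N})$. As $\varepsilon\to 0$, $\psi_{n,\varepsilon}\to\eta_n\psi$ in $W^{1,2}_p(\R^{1+N})$; since the coefficients of $\mathscr{G}$ are bounded on the fixed compact support and the density $\varrho$ of $\mu$ is locally bounded, the convergence promotes to $\mathscr{G}\psi_{n,\varepsilon}\to\mathscr{G}(\eta_n\psi)$ in $L^p(\R^{1+N},\mu)$. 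A diagonal choice $\varepsilon_n\downarrow 0$ then produces $\psi_n:=\psi_{n,\varepsilon_n}\in C_c^\infty(\R^{1+N})$ with $\psi_n\to\psi$ and $G_p\psi_n\to G_p\psi$ in $L^p(\R^{1+N},\mu)$.

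The principal technical obstacle is the $L^p(\mu)$-vanishing of the cross-term $\langle Q\nabla\eta_n,\nabla_x\psi\rangle$ in Stage 1: it is precisely here that the two integrability conditions in the hypothesis on $V$ must be married with the interior regularity of $\psi$ coming from the enhanced smoothness of the coefficients and the boundedness of $\mathscr{G}\psi$. Once this is in place, the remaining steps (the $\mathscr{A}\eta_n$ bound and the mollification) are standard dominated-convergence and local approximation arguments.
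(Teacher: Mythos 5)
This survey states the theorem with a citation to \cite[Theorem 4.1]{lorenzi-zamboni} and does not reproduce the proof, so I can only compare your proposal with the standard argument that the cited proof follows. Your overall strategy is exactly the right one and coincides with it: reduce to the core $D_{\rm comp}({\mathscr G})$ of Theorem \ref{prop-core}, cut off in $x$ with $\eta_n=\phi(n^{-1}\log V)$, and mollify. The identity ${\mathscr G}(\eta_n\psi)-\eta_n{\mathscr G}\psi=\psi{\mathscr A}\eta_n+2\langle Q\nabla\eta_n,\nabla_x\psi\rangle$, the computation of ${\mathscr A}\eta_n$, the shell bounds $n^{-1}\le 2(\log V)^{-1}$, and the dominated-convergence treatment of $\psi\,{\mathscr A}\eta_n$ using the two integrability hypotheses on $V$ (with $e^{-c|s|}$ bounded below on the $s$-slab ${\rm supp}(\psi)\subset[-M,M]\times\R^N$) are all correct, as is Stage 2.

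The genuine gap is your treatment of the cross term. You claim that $\langle Q\nabla_x\psi,\nabla_x\psi\rangle^{1/2}\chi_{[-M,M]}(s)$ lies in $L^p(\R^{1+N},\mu)$ ``via interior Schauder estimates applied to ${\mathscr G}\psi\in C_b(\R^{1+N})$'' together with $\mu_s(\R^N)=1$. This does not work: interior Schauder (or $W^{1,2}_p$) estimates control $\nabla_x\psi$ only on compact sets, with constants depending on the local ellipticity constant and the local norms of the coefficients, and since $q_{ij}$ and $b_j$ are unbounded these constants degenerate as $|x|\to+\infty$. Hence neither $|\nabla_x\psi|$ nor, a fortiori, $\langle Q\nabla_x\psi,\nabla_x\psi\rangle$ is globally bounded, and no global $\mu$-integrability follows from interior regularity plus the finiteness of $\mu$ on the slab. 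The integrability of $Q^{1/2}\nabla_x\psi$ against $\mu$ is precisely the nontrivial input here: it must come from an a priori ``carr\'e du champ'' type inequality such as $\int |u|^{p-2}\langle Q\nabla_xu,\nabla_xu\rangle\chi_{\{u\neq0\}}\,d\mu\le -\tfrac{1}{p-1}\int u|u|^{p-2}G_pu\,d\mu$ (the nonperiodic analogue of the inequality recalled after Theorem \ref{thm-3.5}; note that Theorem \ref{prop-part-car} is not available to you since Hypothesis \ref{hyp10} is not assumed). Moreover, for $p\neq2$ this gives $|u|^{p-2}\langle Q\nabla_xu,\nabla_xu\rangle\in L^1(\mu)$ rather than $\langle Q\nabla_xu,\nabla_xu\rangle^{p/2}\in L^1(\mu)$, so combining it with the shell bound $\langle Q\nabla\eta_n,\nabla\eta_n\rangle\le C\chi_{\{V\ge e^n\}}\langle Q\nabla V,\nabla V\rangle/(V\log V)^2$ requires an additional H\"older-type splitting; this is where the second hypothesis on $V$ is genuinely used for the cross term, not merely as a crude majorant. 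Until this step is supplied, Stage 1 is incomplete. (A minor further point: you should also note explicitly that $\eta_n\psi\in D_{\rm comp}({\mathscr G})$, so that $G_p(\eta_n\psi)={\mathscr G}(\eta_n\psi)$ and graph-norm convergence is meaningful; this follows from the local regularity of $\psi$ and the compact support of $\nabla\eta_n$.)
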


Sufficient conditions for Theorem \ref{core non periodic} hold are given in terms of the coefficients of the operator ${\mathscr A}$
as follows.
\begin{hyp}
\label{hyp-luca}
\par
\noindent
\begin{enumerate}[\rm (i)]
\item
The coefficients $q_{ij}$ and $b_i$ belong to
$C^{\alpha/2,2+\alpha}_{\rm loc}(\R^{1+N})$ and to
$C^{\alpha/2,1+\alpha}_{\rm loc}(\R^{1+N})$, respectively, for any $i,j=1,\ldots,N$.
Moreover, $q_{ij}=q_{ji}$, for any $i,j=1,\ldots,N$, and there exists a positive constant $\eta_0$ such that
\begin{eqnarray*}
\langle Q(s,x) \xi,\xi \rangle\geq \eta_0 |\xi|^2, \qquad \xi \in
\R^N, \quad (s,x) \in \R^{1+N}.
\end{eqnarray*}
\item
There exists a positive constant $k$ such that
\begin{align*}
&(a)~\sup_{(s,x)\in\R\times B_M}\left (|q_{ij}(s,x)|+e^{-k|s|}|b_j(s,x)|\right )<+\infty,\\
&(b)~\sup_{(s,x)\in\R\times B_M}\langle b(s,x),x\rangle<+\infty,
\end{align*}
for any $M>0$ and any $i,j=1,\ldots,N$.
\item
There exist $\beta,\gamma>0$ such that
\begin{equation*}
\lim_{|x| \to +\infty}\,\sup_{s\in\R}\left(\gamma\Lambda_s(x) |x|^{\beta}+\langle b(s,x),x\rangle\right )=-\infty,
\end{equation*}
where $\Lambda_s(x)$ denotes the maximum eigenvalue of the matrix $Q(s,x)$.
\item
There exists $\delta>0$ such that $\beta\delta<\gamma$,
\begin{eqnarray*}
\limsup_{|x|\to +\infty}\,
\sup_{s\in\R}
\frac{|x|^{\beta-2}\Lambda_s(x)}{\exp{(\delta p^{-1} |x|^{\beta})}\exp(k |s|)} <
+\infty
\end{eqnarray*}
and
\begin{eqnarray*}
\limsup_{|x|\to +\infty}\,\sup_{s\in\R}\frac{| \langle b(s,x),x
\rangle |}{|x|^{2+ \beta(p'-1)} \exp{(\delta (p'-1) |x|^{\beta})}\exp(k |s|)}
< +\infty,
\end{eqnarray*}
where $p'$ is the conjugate index of $p$.
\end{enumerate}
\end{hyp}

\begin{example}
{\rm Let the operator ${\mathscr A}$ be defined by
\begin{eqnarray*}
({\mathscr A}\varphi)(s,x)= (1+|x|^2)^p(\Delta_x \varphi)(s,x) - g(s)(1+|x|^2)^{q} \sum_{j=1}^Nx_jD_j\varphi(x),
\end{eqnarray*}
for any $(s,x)\in\R^{1+N}$,
on smooth functions $\varphi:\R^N\to\R$. Here, $p\in {\mathbb N}\cup\{0\}$, $q\in {\mathbb N}$ satisfy $p< q$.
Further, $g:\R\to\R$ is any function which belongs to
$C^{\alpha}_{\rm loc}(\R)$ for some $\alpha\in (0,1)$ and
satisfies $L^{-1}\le g(s)\le Le^{c|s|}$ for any $s\in\R$ and some
$L>0$. Then, ${\mathscr A}$ satisfies Hypothesis \ref{hyp-luca}.}
\end{example}

\section{The periodic case}
\setcounter{equation}{0}
\label{sect-6}
The case when the coefficients of the operator ${\mathscr A}$ are periodic with respect to $s$ is of particular interest
since in this setting a satisfactory asymptotic analysis of the behaviour of the function $P(t,s)f$ as $|t-s|\to +\infty$ can be
carried over. We address this point in the forthcoming section.
Here, we just list some main differences with respect to the general case dealt with in the previous sections.

We will consider functions defined in $\R^{1+N}$ which are $T$-periodic with respect to the variable $s$,
for some $T>0$. We conveniently identify them with functions defined in ${\mathbb T}\times\R^N$ where ${\mathbb T}=[0,T]$
mod. $T$. We thus denote by $C_b({\mathbb T}\times\R^N)$ (resp. $C^{\alpha/2,\alpha}_{\rm loc}({\mathbb T}\times\R^N)$ $\alpha\in (0,1)$)
the set of functions $f:\R^{1+N}\to\R$ which are bounded, continuous (resp. locally $\alpha$-H\"older continuous with respect
to the parabolic distance of $\R^{1+N}$) and such that $f(s+T,x)=f(s,x)$ for any $(s,x)\in\R^{1+N}$.

If the coefficients of the operator ${\mathscr A}$ satisfy Hypothesis \ref{hyp1} and are $T$-periodic with respect to the variable
$s$, then $P(t+T,r+T)f=P(t,r)f$ for any $r,t\in\R$ with $r<t$.
This property shows that the evolution semigroup $(T(t))$ defined by \eqref{sem-evol} maps $C_b(\mathbb T\times\R^N)$ into itself.
$(T(t))$ is a contractive semigroup in $C_b(\mathbb T\times\R^N)$ but it fails to be strongly continuous.
It is not strong Feller, but it improves spatial regularity.
More precisely, for any $f\in C_b(\mathbb T\times\R^N)$ and any $t>0$, the function $T(t)f$ is twice continuously differentiable
in $\R^{1+N}$ with respect to the spatial variables.

One can define the concept of the weak generator $G_{\infty}^{\sharp}$ of the restriction of $T(t)$ to $C_b(\mathbb T\times\R^N)$, which turns out to be the part of $G_{\infty}$ in $C_b(\mathbb T\times\R^N)$ with
\begin{equation}
D(G_{\infty}^{\sharp})=D(G_{\infty})\cap C_b({\mathbb T}\times\R^N)
\label{dominio-per}
\end{equation}
as a domain, where $D(G_{\infty})$ is given by \eqref{DGinfty}.

\subsection{Invariant measure and periodic evolution system of invariant measures}

In the periodic case, under Condition \eqref{cond-mis-inv} one can
prove the existence of a periodic evolution system of invariant
measures, i.e. an evolution system of invariant measures such that $\mu_{s+T}=\mu_s$ for any
$s\in\R$. As it has been already stressed, evolution systems of invariant measures are, in
general, infinitely many. But only one of them is $T$-periodic.

\begin{theorem}[{\cite[Proposition 2.10]{lorenzi-lunardi-zamboni}}]
Under Hypothesis $\ref{hyp1}$ and assuming that the coefficients of ${\mathscr A}$ are $T$-periodic with respect to the variable $s$,
there exists a unique $T$-periodic evolution system of invariant measure for $(P(t,s))$.
\end{theorem}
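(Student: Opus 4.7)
The plan is to reduce the problem to studying a single discrete-time Markov kernel, $K:=P(T,0)$, acting on probability measures on $\R^N$: existence will come from a Krylov--Bogolyubov type averaging, and uniqueness from the Khasminskii--Doob dichotomy for $K$. The periodicity relation $P(t+T,s+T)=P(t,s)$ is the constant tool used to translate between the one-period picture and the full family $\{\mu_s\}$.

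For existence, I would start from any evolution system of invariant measures $\{\mu_s:s\in\R\}$ produced by the Has'minskii-type theorem of Section \ref{sect-4} (in the periodic setting, $T$-periodicity combined with Hypothesis \ref{hyp1}(iii) applied to $J=[0,T]$ supplies the required Lyapunov bound globally in $s$). I would then form the Ces\`aro averages
\begin{eqnarray*}
\mu^\sharp_{s,n}:=\frac{1}{n}\sum_{k=0}^{n-1}\mu_{s+kT},\qquad s\in\R,\;\,n\in\N.
\end{eqnarray*}
By linearity of \eqref{esim} and the identity $P(t+T,s+T)=P(t,s)$, each family $\{\mu^\sharp_{s,n}\}_s$ is again an evolution system of invariant measures. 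Tightness of $\{\mu^\sharp_{0,n}\}$ follows from the uniform bound $\sup_n\int_{\R^N}\varphi\,d\mu^\sharp_{0,n}<+\infty$ inherited from the Lyapunov function, providing a weak limit $\mu^\sharp_0$ along a subsequence. Because of the telescoping identity
\begin{eqnarray*}
\mu^\sharp_{s+T,n}-\mu^\sharp_{s,n}=\frac{1}{n}(\mu_{s+nT}-\mu_s),
\end{eqnarray*}
the limit is automatically $T$-periodic. Setting $\mu^\sharp_s:=P(T,s)^*\mu^\sharp_0$ for $s\in[0,T]$ and extending periodically produces the desired system: the evolution identity \eqref{esim} is then verified from the composition rule $P(T,s)=P(T,t)P(t,s)$ together with the fixed-point relation $P(T,0)^*\mu^\sharp_0=\mu^\sharp_0$.

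Uniqueness rests on the observation that any $T$-periodic evolution system $\{\mu_s\}$ satisfies $\mu_0=P(T,0)^*\mu_0$ (by periodicity $\mu_T=\mu_0$ and \eqref{esim}) and, conversely, any $K$-invariant probability measure determines a $T$-periodic system through the formula above. Uniqueness therefore reduces to uniqueness of the invariant probability measure of $K$. I would invoke Doob's theorem: $K$ is irreducible by Proposition \ref{prop-luca}(ii), which yields $(K\chi_A)(x)>0$ for every Borel set $A\subset\R^N$ of positive Lebesgue measure and every $x\in\R^N$; and $K$ is strong Feller because $P(T,0)f$ is the value at $t=T$ of the bounded classical solution of \eqref{Cauchy-sect-2} with $s=0$, hence belongs to $C^2(\R^N)$ via the interior Schauder estimates underlying the proof of Theorem \ref{thm:2.2}, and this continuity propagates to bounded Borel data through \eqref{repres}. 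These two properties force any two $K$-invariant probability measures to be mutually equivalent and of full topological support in $\R^N$, hence to coincide; the $T$-periodic system is then uniquely determined.

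The main obstacle I expect is the rigorous verification of strong Feller for $K$ without an explicit transition density: one must pass from continuity of $P(T,0)f$ for $f\in C_b(\R^N)$ to continuity for bounded Borel $f$, which requires approximating $f$ by a uniformly bounded sequence $f_n\in C_b(\R^N)$ converging pointwise almost everywhere and invoking an equicontinuity estimate in $x$ for $P(T,0)f_n$ that is independent of $n$---typically a gradient bound coming from interior parabolic regularity. A secondary delicate point is the uniform-in-$s$ tightness required to pass to the limit in the Ces\`aro averages when only the weaker version of the Lyapunov inequality from Hypothesis \ref{hyp1}(iii) is available; here the $T$-periodicity of the coefficients is essential, since it promotes the local-in-time bound on $[0,T]\times\R^N$ to a global control on $\R\times\R^N$.
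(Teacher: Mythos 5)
Your proof is correct and follows essentially the same route as the cited source \cite[Proposition 2.10]{lorenzi-lunardi-zamboni} (the survey itself states the theorem without reproducing a proof): reduction to the one-period Markov kernel $P(T,0)$, a Krylov--Bogolyubov/Ces\`aro-averaging argument with tightness supplied by the Lyapunov function in \eqref{cond-mis-inv} (a standing assumption from Section \ref{sect-5} that is indeed needed for the existence half) for existence, and irreducibility plus the strong Feller property plus the Doob--Khasminskii dichotomy for the kernel $P(T,0)$ for uniqueness. The obstacle you flag --- the strong Feller property of $P(T,0)$ on bounded Borel data --- is already established under Hypothesis \ref{hyp1} alone in \cite[Proposition 2.4]{kunze-lorenzi-lunardi} (of which Proposition \ref{prop-luca} reports only a selection), precisely by the approximation-plus-interior-regularity argument you sketch, so nothing essential is missing.
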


Let us denote by $\{\mu_s^{\sharp}: s\in\R\}$ the unique periodic evolution system of measures for the evolution operator $(P(t,s))$. Starting from
this system we define a Borel measure on $(0,T)\times\R^N$ setting
\begin{equation}
\mu^{\sharp}(A\times B)=\frac{1}{T}\int_A\mu_s^{\sharp}(B)ds,
\label{mu-sharp}
\end{equation}
on Borel sets $A\subset (0,T)$ and $B\subset\R^N$, and then extending it to all the Borel set of $(0,T)\times\R^N$.

$\mu^{\sharp}$ is a probability measure and it is invariant for $(T(t))$. Indeed,
\begin{eqnarray*}
\int_{(0,T)\times \R^N}T(t)fd\mu^{\sharp}=\int_{(0,T)\times \R^N}fd\mu^{\sharp},
\end{eqnarray*}
for any $f\in C_b(\mathbb T\times\R^N)$.

Let us denote by $L^p(\mathbb T\times\R^N,\mu^{\sharp})$ the set of all functions $f:\R^{1+N}\to\R$
such that $f(\cdot+T,\cdot)=f$ almost everywhere in $\R^{1+N}$ and
$\int_{(0,T)\times\R^N}|f|^pd\mu^{\sharp}<+\infty$. $L^p(\mathbb T\times\R^N,\mu^{\sharp})$ is a Banach space
when endowed with the norm
\begin{eqnarray*}
\|f\|_{L^p(\mathbb T\times\R^N,\mu^{\sharp})}^p=\int_{(0,T)\times\R^N}|f|^pd\mu^{\sharp},\qquad\;\,
f\in L^p(\mathbb T\times\R^N,\mu^{\sharp}).
\end{eqnarray*}

$(T(t))$ extends to $L^p(\mathbb T\times\R^N,\mu^{\sharp})$ with a strongly continuous semigroup of contractions.
In the case when ${\mathscr A}$ is the nonautonomous $T$-periodic Ornstein-Uhlenbeck operator,
the domain of the infinitesimal generator $G_p^{\sharp}$ of
$(T(t))$ in $L^p(\mathbb T\times\R^N,\mu^{\sharp})$ has been characterized in the case when $p=2$.

\begin{theorem}[{\cite[Theorem 1.2]{geissert-lunardi}}]
Suppose that Condition \eqref{cond-U} is satisfied. Then,
\begin{eqnarray*}
D(G_2^{\sharp})=\{u\in W^{1,2}_{2,\rm loc}(\R^{1+N}): D_tu, D_iu, D_{ij}u\in L^2(\mathbb T\times\R^N,\mu^{\sharp})\}
\end{eqnarray*}
In particular, $D(G_2^{\sharp})$ is compactly embedded in $L^2(\mathbb T\times\R^N,\mu^{\sharp})$.
\end{theorem}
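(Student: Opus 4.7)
The plan is to follow the standard energy-method scheme for characterizing parabolic-generator domains in Hilbert $L^2$-spaces, adapting it to the periodic Gaussian evolution family $\{\mu_s^\sharp\}$, and to deduce compactness from the Gaussian tails of $\mu^\sharp$. As a first step I would exhibit a convenient core for $G_2^\sharp$, namely the space of smooth $T$-periodic functions in $s$ with compact support in $x$; by the periodic analogue of Theorem \ref{prop-core} this space is dense in $D(G_2^\sharp)$ for the graph norm, and on it $G_2^\sharp$ acts as ${\mathscr G}={\mathscr A}_O-D_s$.

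Next, for $u$ in this core and $\lambda u-{\mathscr G}u=f$ with $\lambda$ large enough, I would multiply by $u$ and integrate over $(0,T)\times\R^N$ against $\mu^\sharp$. The $s$-derivative term $\int_0^T\!\!\int_{\R^N}(D_s u)u\,d\mu_s^\sharp\,ds$ is handled via the adjoint equation $D_s\mu_s^\sharp+{\mathscr A}_O^*\mu_s^\sharp=0$ for the Gaussian family together with $T$-periodicity to cancel the boundary terms at $s=0,T$, while the diffusion term is bounded below by $\eta_0\|\nabla_x u\|_{L^2(\mu^\sharp)}^2$ thanks to Hypothesis \ref{hyp1}(ii); this produces a uniform $H^1$-bound
\begin{equation*}
\|\nabla_x u\|_{L^2(\mu^\sharp)}^2+\|u\|_{L^2(\mu^\sharp)}^2\le C\|f\|_{L^2(\mu^\sharp)}^2.
\end{equation*}
Differentiating the equation with respect to $x_k$, multiplying by $D_k u$ and summing in $k$ yields, after a similar integration, an analogous bound on $\|D^2_x u\|_{L^2(\mu^\sharp)}$; the fact that the drift $B(s)x$ differentiates to the bounded matrix $B(s)$ is crucial here, since it keeps the commutators controllable. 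Finally $D_s u={\mathscr A}_O u-f$ is in $L^2(\mu^\sharp)$ once the second derivatives are, using the finite moments of the Gaussian measure from Theorem \ref{thm-charact}. Therefore $D(G_2^\sharp)\subseteq W^{1,2}_2(\mathbb T\times\R^N,\mu^\sharp)$ with the graph norm equivalent to the $W^{1,2}_2$-norm.

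For the reverse inclusion, given $u\in W^{1,2}_2(\mathbb T\times\R^N,\mu^\sharp)$, I would approximate by smooth periodic compactly supported $u_n$ (cutoff in $x$ plus mollification in both variables), apply the above bounds to $u_n$, and pass to the limit using the closedness of $G_2^\sharp$. For the compact embedding, since $s\in\mathbb T$ ranges in a compact circle and $\mu^\sharp$ has Gaussian tails (explicit density from \eqref{gaussian-family}), a bounded sequence in $D(G_2^\sharp)$ is relatively compact in $L^2((0,T)\times B_R)$ for each $R$ by the classical Rellich theorem, while the Gaussian tails make the $L^2(\mu^\sharp)$-mass outside $(0,T)\times B_R$ uniformly small as $R\to+\infty$; a diagonal extraction then concludes.

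The main obstacle is the $D^2_x$ energy estimate: integration by parts against the Gaussian density of $\mu_s^\sharp$ produces first-order terms in $x$ coming from its logarithmic derivative $-\tfrac{1}{2}Q_s^{-1}x$, which must be absorbed by the second-order quadratic form uniformly in $s\in[0,T]$. Condition \eqref{cond-U} guarantees that $\{Q_s\}_{s\in[0,T]}$ is bounded from above and below, making the uniform absorption possible in principle, but carrying out all the cancellations cleanly and producing a closed realization with equivalent graph norm — while correctly exploiting $T$-periodicity in $s$ to discard otherwise problematic boundary terms — is the technical heart of the argument.
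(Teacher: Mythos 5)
First, a caveat on the comparison: this survey states the theorem only as a citation of \cite[Theorem 1.2]{geissert-lunardi} and contains no proof of it, so your proposal must be measured against the argument in that reference rather than against anything in the text. Your route is genuinely different from Geissert and Lunardi's. They deduce the inclusion $D(G_2^{\sharp})\subset W^{1,2}_2(\mathbb T\times\R^N,\mu^{\sharp})$ from maximal $L^2$ regularity for the nonautonomous Cauchy problem (the content of Theorem \ref{thm-optimal-L2} here), proved by transporting each $L^2(\R^N,\mu_t^{\sharp})$ onto a fixed Gaussian reference space through the explicit densities \eqref{gaussian-family}, using the known autonomous characterization of the Ornstein--Uhlenbeck domain and abstract nonautonomous maximal-regularity results; compactness then comes from the classical compact embedding of the Gaussian Sobolev space into $L^2$. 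Your direct energy method is more elementary and is viable precisely because $p=2$: testing with $u$, then differentiating in $x_k$ and testing with $D_ku$, does close, since the commutator with the drift is the bounded matrix $B(s)$ and the $D_s$-term is disposed of by periodicity together with the equation $D_s\varrho^{\sharp}=-{\mathscr A}_O^*\varrho^{\sharp}$ (equivalently, the carr\'e du champ identity applied to $D_ku$). What you gain is a self-contained Hilbert-space argument avoiding Mehler-kernel estimates; what you lose is any prospect of reaching $p\neq 2$, which is why \cite{geissert-lorenzi-schnaubelt} needed different tools.

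Two steps of your sketch need tightening before the argument actually closes. First, after differentiating the equation you cannot pair $D_kf$ with $D_ku$ for $f$ merely in $L^2$; integrating the derivative back onto $f$ produces, besides $\int f\,D_{kk}u\,d\mu^{\sharp}$ (absorbable by Cauchy--Schwarz and Young), the term $\tfrac12\int f\,\langle Q_s^{-1}x,\nabla_xu\rangle\,d\mu^{\sharp}$. Absorbing it requires the Gaussian moment inequality $\|\,|x|w\,\|_{L^2(\R^N,\mu_s^{\sharp})}\le C(\|w\|_{L^2(\R^N,\mu_s^{\sharp})}+\|\nabla w\|_{L^2(\R^N,\mu_s^{\sharp})})$ with $C$ uniform in $s$, obtained by integrating $D_i(x_iw^2\varrho^{\sharp})$ and using that \eqref{cond-U} and uniform ellipticity bound $Q_s$ above and below; the ``finite moments'' of Theorem \ref{thm-charact} control $\int|x|^2d\mu_s^{\sharp}$, not $\int|x|^2|\nabla u|^2d\mu_s^{\sharp}$, and the same inequality is what makes ${\mathscr A}_Ou\in L^2(\mu^{\sharp})$ in your reverse inclusion. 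Second, in the compactness argument the Gaussian tails of $\mu^{\sharp}$ alone do not make $\int_{(0,T)\times(\R^N\setminus B_R)}|u_n|^2d\mu^{\sharp}$ uniformly small for an $L^2$-bounded sequence; you need equi-integrability, which follows from $\sup_n\|\,|x|u_n\|_{L^2(\mu^{\sharp})}<+\infty$, again via the moment inequality combined with the already established gradient bound. Finally, the core to invoke is Theorem \ref{thm-cores}(iii) (verified for ${\mathscr A}_O$ with, say, $V(x)=e^{|x|^2}$), not a ``periodic analogue of Theorem \ref{prop-core}'', whose elements are compactly supported in $s$ rather than periodic. None of these is fatal, but each is a point where the proposal as written is not yet a proof.
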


For more general nonautonomous operators with $T$-periodic coefficients with respect to $s$, some suitable cores
have been obtained in \cite{lorenzi-lunardi-zamboni,lorenzi-zamboni}.

\begin{theorem}
\label{thm-cores} Suppose that Hypothesis \ref{hyp1}, Condition
\eqref{cond-U} are satisfied and the coefficients are $T$-periodic
with respect to $s$. Then, the following properties are satisfied.
\begin{enumerate}[\rm (i)]
\item
$D(G_{\infty}^{\sharp})$ $($see \eqref{dominio-per}$)$ is a core of $G_p^{\sharp}$ for any $p\in [1,+\infty)$ $($
\cite[Theorem 6.7]{lorenzi-zamboni}$)$;
\item
for any $\tau \in \R$, $\chi \in
C^{\infty}_{c}(\R^N)$ and $\alpha \in C^1_c(\R)$  with ${\rm supp}(\alpha) \subset (a, a +T)$ for some  $a\geq \tau$, let
$u_{\tau, \chi, \alpha}:\R^{1+N}\to\R$ be the
$T$-periodic $($with respect to $s)$ extension of the function
$(s,x)\mapsto\alpha(s)(P(s,\tau)\chi)(x) $ defined in $[a, a +T)\times
\R^N$. Then, the set ${\mathscr C}=\{u_{\tau,\chi,\alpha}: \tau\in\R,~\alpha\in C^1_c(\R),~\chi\in C^1_c(\R^N)\}$
is a core of $G_p^{\sharp}$ for any $p\in (1,+\infty)$ $($\cite[Proposition 2.12]{lorenzi-lunardi-zamboni}$)$;
\item
suppose that there exists a strictly positive function $V\in C^2(\R^N)$ blowing up as $|x|\to +\infty$, such that
\begin{eqnarray*}
\;\;\;\;\;\;\;\;\;\;\frac{({\mathscr A} V)}{V \log V} \in
L^p(\mathbb T\times\R^N,\mu^{\sharp}) \quad \mbox{and} \quad
\frac{\langle Q\nabla V, \nabla V \rangle}{V^2\log V} \in
L^p(\mathbb T\times\R^N,\mu^{\sharp}),
\end{eqnarray*}
for some $p\in [1,+\infty)$. Then,
\begin{eqnarray*}
\;\;\;\;\;\;\;\;\;\;C_{c}^{\infty}(\mathbb T\times\R^N) := \{ f\in C^{\infty}(\mathbb T\times\R^N):\
{\rm supp}(f)\subset \R\times B_R \mbox{ for some } R>0 \},
\end{eqnarray*}
is a core for the operator $G_p^{\sharp}$ $($\cite[Theorem 6.8]{lorenzi-lunardi-zamboni}$)$.
\end{enumerate}
\end{theorem}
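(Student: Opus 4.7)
The plan is to establish the three core statements in sequence, relying on the general criterion that a subspace $D\subset D(G_p^{\sharp})$ is a core for $G_p^{\sharp}$ if and only if $(\lambda-G_p^{\sharp})D$ is dense in $L^p(\mathbb T\times\R^N,\mu^{\sharp})$ for some (hence every) $\lambda$ in the resolvent set. This reduces each claim to a density statement, which I will handle with different tools in each case.

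For part (i), I would exploit the fact that $G_{\infty}^{\sharp}$ is defined through the Laplace transform of $(T(t))$ acting on $C_b(\mathbb T\times\R^N)$. Since $(T(t))$ is contractive both on $C_b(\mathbb T\times\R^N)$ and on $L^p(\mathbb T\times\R^N,\mu^{\sharp})$, uniqueness of the Laplace transform forces $R(\lambda,G_{\infty}^{\sharp})f=R(\lambda,G_p^{\sharp})f$ for every $f\in C_b(\mathbb T\times\R^N)$ and every $\lambda>0$; in particular, $R(\lambda,G_p^{\sharp})(C_b(\mathbb T\times\R^N))\subset D(G_{\infty}^{\sharp})$. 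Because $\mu^{\sharp}$ is a probability measure, $C_b(\mathbb T\times\R^N)$ is dense in $L^p(\mathbb T\times\R^N,\mu^{\sharp})$, so $(\lambda-G_p^{\sharp})D(G_{\infty}^{\sharp})\supset C_b(\mathbb T\times\R^N)$ is dense in $L^p(\mathbb T\times\R^N,\mu^{\sharp})$, and the criterion closes the argument.

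For part (ii), I would first check that ${\mathscr C}\subset D(G_{\infty}^{\sharp})$ and compute the action of ${\mathscr G}$ on its elements. On the strip $[a,a+T)\times\R^N$, the function $u_{\tau,\chi,\alpha}$ has the form $\alpha(s)(P(s,\tau)\chi)(x)$, and since $s\mapsto P(s,\tau)\chi$ solves $D_s P(s,\tau)\chi={\mathscr A}P(s,\tau)\chi$, a direct computation gives
\begin{eqnarray*}
({\mathscr G}u_{\tau,\chi,\alpha})(s,x)=-\alpha'(s)(P(s,\tau)\chi)(x),
\end{eqnarray*}
which is bounded and continuous and extends $T$-periodically to an element of $C_b(\mathbb T\times\R^N)$, placing $u_{\tau,\chi,\alpha}$ in $D(G_{\infty}^{\sharp})$. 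For density in graph norm, I would approximate a generic element of the core $D(G_{\infty}^{\sharp})$ produced by part (i): given $v\in D(G_{\infty}^{\sharp})$, I would build approximants out of products $\alpha_j(s)(P(s,\tau_j)\chi_j)(x)$ via a partition of unity in $s$ on $[0,T]$ with $\tau_j$ chosen close to each piece of the partition, and truncations $\chi_j\in C^{\infty}_c(\R^N)$ of $v(\tau_j,\cdot)$; Proposition \ref{prop-2.5} together with the contractivity of $P(t,s)$ and dominated convergence against the probability measure $\mu^{\sharp}$ yield the required graph-norm convergence.

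For part (iii), the strategy is the classical Lyapunov cutoff approach. Setting $\phi_n(x)=\psi((\log V(x))/(\log n))$ with $\psi\in C^{\infty}_c([0,+\infty))$ equal to $1$ on $[0,1/2]$ and supported in $[0,1]$, for $f\in D(G_{\infty}^{\sharp})$ the identity
\begin{eqnarray*}
{\mathscr G}(\phi_n f) - \phi_n {\mathscr G}f = 2\langle Q\nabla\phi_n,\nabla_x f\rangle + f\,{\mathscr A}\phi_n
\end{eqnarray*}
reduces matters to controlling $\|\nabla\phi_n\|$ and $\|{\mathscr A}\phi_n\|$ against $\mu^{\sharp}$. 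A direct computation expresses
\begin{eqnarray*}
\nabla\phi_n=\frac{\psi'}{\log n}\cdot\frac{\nabla V}{V},\qquad {\mathscr A}\phi_n=\frac{\psi'}{\log n}\cdot\frac{{\mathscr A}V}{V}+\left(\frac{\psi''}{(\log n)^2}-\frac{\psi'}{\log n}\right)\frac{\langle Q\nabla V,\nabla V\rangle}{V^2},
\end{eqnarray*}
and on the support of $\psi^{(k)}((\log V)/\log n)$ the ratio $(\log V)/\log n$ is bounded, so the two terms are dominated pointwise by $|{\mathscr A}V|/(V\log V)$ and $\langle Q\nabla V,\nabla V\rangle/(V^2\log V)$ up to multiplicative constants, both of which are in $L^p(\mathbb T\times\R^N,\mu^{\sharp})$ by hypothesis, while going to zero pointwise outside any compact set. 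Dominated convergence then yields $\phi_n f\to f$ in the graph norm (using Theorem \ref{prop-part-car} to put $\nabla_x f$ in $L^p$ for the mixed term). Once $f$ has compact support in $x$, a tensor-product mollification (convolution with a smooth kernel in $x$ against a periodic mollifier in $s$) produces an approximation by elements of $C^{\infty}_c(\mathbb T\times\R^N)$ in graph norm, using interior Schauder-type estimates to commute the mollifiers past ${\mathscr G}$. Part (i) then lifts the result from $D(G_{\infty}^{\sharp})$ to all of $D(G_p^{\sharp})$.

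The main obstacle is part (iii): the Lyapunov commutator estimate is where the delicate balance between the two hypotheses on $V$ has to be exploited exactly, and both the mixed gradient term $\langle Q\nabla\phi_n,\nabla_x f\rangle$ and the zero-order term $f\,{\mathscr A}\phi_n$ must be shown to vanish in $L^p(\mathbb T\times\R^N,\mu^{\sharp})$ uniformly in $n$; in addition, the mollification step that follows must be carried out compatibly with the periodicity in $s$ and with the limited regularity coming from the definition \eqref{DGinfty} of $D(G_{\infty})$.
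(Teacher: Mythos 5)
The paper itself contains no proof of this theorem: it is a survey statement quoted from \cite{lorenzi-zamboni} and \cite{lorenzi-lunardi-zamboni}, so your proposal must stand on its own. Your architecture is the right one (the resolvent-density criterion, the identification of resolvents for (i), the Lyapunov cutoff $\psi(\log V/\log n)$ for (iii)), and your computations of ${\mathscr G}u_{\tau,\chi,\alpha}=-\alpha'\,P(\cdot,\tau)\chi$, of $\nabla\phi_n$ and of ${\mathscr A}\phi_n$ are correct. But two steps do not go through as written. In (ii), the assertion that a partition of unity in $s$ plus truncations ``yield the required graph-norm convergence'' hides the whole difficulty: the graph norm sees ${\mathscr G}$ of the approximants, namely $-\sum_j\alpha_j'(s)(P(s,\tau_j)\chi_j)(x)$, and the $\alpha_j'$ blow up like the inverse mesh as the partition is refined. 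The only rescue is the cancellation $\sum_j\alpha_j'=0$ combined with a first-order expansion of $\tau\mapsto P(s,\tau)v(\tau,\cdot)$, which needs the identity $\frac{d}{d\tau}P(s,\tau)v(\tau,\cdot)=-P(s,\tau)({\mathscr G}v)(\tau,\cdot)$; the paper records this only for $C^2$ functions constant outside a compact set, whereas a generic $v\in D(G_{\infty}^{\sharp})$ is, by \eqref{DGinfty} and \eqref{dominio-per}, merely locally $W^{1,2}_p$. The support constraint ${\rm supp}(\alpha)\subset(a,a+T)$ with $a\ge\tau$ adds a further obstruction, since each $\tau_j$ must lie strictly to the left of its piece. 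A robust route is duality: show that any $g\in L^{p'}$ annihilating $(\lambda-G_p^{\sharp}){\mathscr C}$ vanishes, using that $(\lambda-G_p^{\sharp})u_{\tau,\chi,\alpha}=(\lambda\alpha+\alpha')P(\cdot,\tau)\chi$ and varying first $\alpha$, then $\tau$ and $\chi$.

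In (iii), the treatment of the mixed term $\langle Q\nabla\phi_n,\nabla_xf\rangle$ fails as stated. First, Theorem \ref{prop-part-car} is available only when $\ell_p<+\infty$, i.e.\ under Hypothesis \ref{hyp10}, which is not among the hypotheses of part (iii). Second, and more seriously, knowing $\nabla_xf\in L^p$ does not make the product small in $L^p$: the factor $Q\nabla\phi_n$ is itself controlled only through an $L^p$ function (via $\langle Q\nabla V,\nabla V\rangle/(V^2\log V)$), and a product of two $L^p$ functions need not lie in $L^p$. What the hypothesis actually gives, on the support of $\psi'$ where $\log n\le 2\log V$, is $|Q^{1/2}\nabla\phi_n|\le C(\log n)^{-1/2}\bigl(\langle Q\nabla V,\nabla V\rangle/(V^2\log V)\bigr)^{1/2}\in L^{2p}$, and this must be paired by Cauchy--Schwarz with $|Q^{1/2}\nabla_xf|\in L^{2p}$ --- or one must first reduce to $f=R(\lambda)g$ with bounded spatial gradient using the gradient estimates of Section \ref{sect-3} --- before H\"older's inequality closes the estimate with a factor $(\log n)^{-1/2}\to 0$. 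This pairing, not the bare membership $\nabla_xf\in L^p$, is where the second hypothesis on $V$ is genuinely used, and it needs to be set up explicitly; the final mollification step you describe is then standard but, as you note, requires a Friedrichs-type commutator lemma because of the limited $s$-regularity of elements of $D(G_{\infty}^{\sharp})$.
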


\begin{remark}
\label{stannat} 
{\rm In the case when $p=1$ and under a different set of
assumptions (requiring, in particular, that the diffusion coefficients are bounded),
the result in Theorem \ref{thm-cores}(iii) can be obtained
as a byproduct of the result in \cite[Corollary 1.14]{stannat1}.}
\end{remark}

\begin{example}
{\rm Let the operator ${\mathscr A}$ be defined by
\begin{eqnarray*}
({\mathscr A}\varphi)(s,x)= (1+|x|^2)^p(\Delta_x \varphi)(x) - g(s)(1+|x|^2)^{q}\sum_{j=1}^Nx_jD_j\varphi(x),
\end{eqnarray*}
for any $(s,x)\in\R^{1+N}$, where
$g$ is a positive and $\alpha$-H\"older continuous (for some
$\alpha\in (0,1))$ periodic function, $p\in {\mathbb N}\cup\{0\}$, $q\in {\mathbb N}$
satisfy $p<q$.
Then, ${\mathscr A}$ satisfies the assumptions of Theorem \ref{thm-cores}(iii).}
\end{example}

\section{Asymptotic behaviour}
\setcounter{equation}{0}
\label{sect-7}
In the autonomous case is known that,
whenever an invariant measure exists, it holds that
\begin{equation}
\lim_{t\to +\infty}\|T(t)f-\overline f\|_{L^p(\R^N,\mu)}=0,
\label{convergence-to-zero}
\end{equation}
for any $f\in L^p(\R^N,\mu)$.

In the nonautonomous case, the counterparts of \eqref{convergence-to-zero} are the following formulas
\begin{equation}
\label{CompAs} \lim_{t\to +\infty} \| P(t,s)\varphi -
m_s\varphi\|_{L^p(\R^N, \mu_t)} =0, \quad s\in \R, \; \varphi \in
L^p(\R^N, \mu_s),
\end{equation}
and
\begin{equation}
\label{CompAsInd} \lim_{s\to -\infty} \| P(t,s)\varphi -
m_s\varphi\|_{L^p(\R^N, \mu_t)} =0, \quad t\in \R, \; \varphi \in
C_b(\R^N),
\end{equation}
where
\begin{eqnarray*}
m_s(f)=\int_{\R^N}fd\mu_s,\qquad\;\,s\in\R.
\end{eqnarray*}

In the nonperiodic case, the previous estimates have been proved
in \cite{geissert-lunardi} for the nonautonomous Ornstein-Uhlenbeck operator ${\mathscr A}_O$.
More precisely, Geissert and Lunardi have proved the following result.

\begin{theorem}[{\cite[Proposition 2.17]{geissert-lunardi-2}}]
Let
\begin{eqnarray*}
c_0=\sup\left\{\frac{\kappa_0^2\omega}{M(\omega)^2C^2}: \omega\in (0,\omega_0)\right\},
\end{eqnarray*}
where $\omega_0$ is the supremum of the constant $\omega$ such that \eqref{cond-U} holds true for some $M(\omega)>0$,
$\kappa_0$ is any positive constant such that $\|B(t)x\|\ge\kappa_0\|x\|$ for any $t\in\R$ and any $x\in\R^N$ and $C=\sup_{t\in\R}\|B(t)\|_{\infty}$.
Then,
\begin{equation}
\|P_O(t,s)f-m_s(f)\|_{L^2(\R^N,\mu_t)}\le e^{-c_0(t-s)}\|f\|_{L^2(\R^N,\mu_s)},\qquad\;\,s,t\in\R,\;\,s<t,
\label{estim-GL}
\end{equation}
for any $f\in L^2(\R^N,\mu_s)$.
\end{theorem}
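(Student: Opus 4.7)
The plan is to exploit the explicit Gaussian structure of the nonautonomous Ornstein--Uhlenbeck setup: both the transition kernel \eqref{nonaut-OU} and the invariant family $\{\mu_s\}$ (given by \eqref{gaussian-family}) are Gaussian, so the $L^2$-contractivity estimate reduces via Hermite-chaos diagonalization to a finite-dimensional matrix inequality. First, since $P_O(t,s)1=1$ and $\int P_O(t,s)f\,d\mu_t=m_s(f)$ by \eqref{esim}, replacing $f$ with $f-m_s(f)$ reduces the claim to
\begin{equation*}
\|P_O(t,s)f\|_{L^2(\mu_t)} \le e^{-c_0(t-s)}\|f\|_{L^2(\mu_s)}
\end{equation*}
for every $f$ with $m_s(f)=0$.

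The core observation is that $P_O(t,s)$ respects the centered Hermite-polynomial chaos decompositions of $L^2(\mu_s)$ and $L^2(\mu_t)$ (relative to the Gaussian covariances $\tfrac12 Q_s$ and $\tfrac12 Q_t$). From \eqref{nonaut-OU}, $P_O(t,s)$ sends polynomials of degree $k$ to polynomials of degree $k$ modulo lower-degree terms, and the correction terms that appear when transporting a Hermite polynomial across the two measures cancel via the Lyapunov identity
\begin{equation*}
Q_s = Q_{s,t} + U(s,t) Q_t U(s,t)^*, \qquad Q_{s,t} := \int_s^t U(s,\xi)Q(\xi)U(s,\xi)^*\,d\xi,
\end{equation*}
which follows from the cocycle property of $U$ and the definition of $Q_s$. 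By the second-quantization structure of the OU transition kernel, the operator norm of $P_O(t,s)$ on the $k$-th chaos equals the $k$-th power of its norm on the first chaos; since that first-chaos norm will turn out to be $\le 1$, the supremum over $k \ge 1$ is attained at $k=1$.

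On the first chaos the estimate is entirely explicit. For $f(x)=\langle v,x\rangle$, formula \eqref{nonaut-OU} gives $P_O(t,s)f(x)=\langle U(s,t)^*v, x\rangle$, and Gaussian second moments give
\begin{equation*}
\frac{\|P_O(t,s)f\|_{L^2(\mu_t)}^2}{\|f\|_{L^2(\mu_s)}^2} = \frac{\langle Q_t U(s,t)^*v, U(s,t)^*v\rangle}{\langle Q_s v, v\rangle},
\end{equation*}
whose supremum over $v\ne 0$ equals $\|Q_t^{1/2}U(s,t)^* Q_s^{-1/2}\|_{\mathrm{op}}^2$. Using the Lyapunov identity above, the desired bound reduces to the positive-semidefinite matrix inequality
\begin{equation*}
Q_{s,t} \ge \bigl(1 - e^{-2c_0(t-s)}\bigr) Q_s.
\end{equation*}

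To establish this with the precise constant $c_0 = \sup_\omega \kappa_0^2 \omega/(M(\omega)^2 C^2)$, I would combine the uniform ellipticity $Q(\xi) \ge \eta_0 I$ (which lower-bounds $Q_{s,t}$ in terms of the smallest singular values of $\xi\mapsto U(s,\xi)$), the exponential decay $\|U(t,s)\|\le M(\omega)e^{-\omega(s-t)}$ from \eqref{cond-U} (which upper-bounds $\|Q_s\|$ uniformly in $s$), and the two-sided drift bounds $\kappa_0|x|\le\|B(t)x\|\le C|x|$. The lower bound $\kappa_0$ enters via Gronwall applied to $D_tU(t,s)=-B(t)U(t,s)$, converting the assumed decay of $\|U\|$ into a lower bound on $\sigma_{\min}(U(s,\cdot))$ that is crucial for controlling $Q_{s,t}$ from below. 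Optimizing over $\omega\in(0,\omega_0)$ then gives the stated $c_0$. The main obstacle is precisely this quantitative step: naive bounds already produce exponential decay at some rate proportional to $\omega$, but extracting the sharp factor $\kappa_0^2/(M(\omega)^2 C^2)$ requires a careful Riccati-type analysis exploiting both the upper and lower bounds on $B$ simultaneously.
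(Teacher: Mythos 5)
The survey does not reproduce a proof of this statement (it is quoted from \cite[Proposition 2.17]{geissert-lunardi-2}, where the argument runs through the identit\'e de carr\'e du champ together with the Poincar\'e inequality for the Gaussian measures $\mu_t$), so your proposal has to stand on its own. Your structural reductions are sound: the Lyapunov identity $Q_s=Q_{t,s}+U(s,t)Q_tU(s,t)^*$ is correct, the first-chaos computation is correct, and the second-quantization principle (norm on the $k$-th chaos equals the $k$-th power of the first-chaos norm, hence the supremum over $k\ge 1$ sits at $k=1$) is the standard Mehler/Chojnowska-Michalik--Goldys mechanism and does extend to the two-measure nonautonomous setting, although you assert rather than verify the block-diagonal chaos structure. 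With these reductions the theorem is indeed equivalent to the matrix inequality $Q_{t,s}\ge (1-e^{-2c_0(t-s)})Q_s$, which (by a Gronwall argument in $t$ using the same Lyapunov identity) follows from, and as $t-s\to 0^+$ degenerates to, the single-time inequality $Q(r)\ge 2c_0\,Q_r$ for all $r\in\R$. Notably, this is exactly the inequality one also needs on the Poincar\'e/carr\'e-du-champ route, so the two approaches converge on the same crux.

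That crux is precisely what you do not prove, and this is a genuine gap rather than a routine omission: the entire content of the theorem is the explicit value $c_0=\sup_\omega \kappa_0^2\omega/(M(\omega)^2C^2)$, and the ingredients you list do not visibly produce it. The naive combination of $\|U(r,\xi)^*h\|\le M(\omega)e^{-\omega(\xi-r)}\|h\|$ with two-sided ellipticity bounds on $Q$ gives
\begin{equation*}
\langle Q_r h,h\rangle \le \frac{M(\omega)^2}{2\omega}\,\sup_\xi\|Q(\xi)\|\,\|h\|^2,
\qquad \langle Q(r)h,h\rangle\ge \eta_0\|h\|^2,
\end{equation*}
i.e.\ a rate of the form $\eta_0\omega/(M(\omega)^2\sup\|Q\|)$, in which the drift bounds $\kappa_0$ and $C$ do not appear at all; conversely, your stated constant contains no trace of the ellipticity constants of $Q$. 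So the ``careful Riccati-type analysis'' you defer is not a technical refinement of bounds you already have --- it must be a structurally different argument (in \cite{geissert-lunardi-2} the constants $\kappa_0$ and $C$ enter through a lower bound on $Q_t$ obtained by exploiting the equation $D_\xi U(t,\xi)=U(t,\xi)B(\xi)$ and the invertibility of $B$, not through the crude singular-value bounds you describe). Until the inequality $Q(r)\ge 2c_0 Q_r$ is actually established with the stated $c_0$, the proof is incomplete at its decisive step.
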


Estimate \eqref{estim-GL} can be extended to any $p\in (1,+\infty)$ by interpolation.
Indeed, since $P_O(t,s)$ is a contraction from $L^1(\R^N,\mu_s)$ into $L^1(\R^N,\mu_t)$ and from $L^{\infty}(\R^N,\mu_s)=L^{\infty}(\R^N)$
into $L^{\infty}(\R^N,\mu_t)=L^{\infty}(\R^N)$ (recall that each measure $\mu_r$ is equivalent to the Lebesgue measure), we can estimate
\begin{align*}
&\|P_O(t,s)f-m_s(f)\|_{L^1(\R^N,\mu_t)}\le 2\|f\|_{L^1(\R^N,\mu_s)},\\[1mm]
&\|P_O(t,s)f-m_s(f)\|_{L^{\infty}(\R^N,\mu_t)}\le 2\|f\|_{L^{\infty}(\R^N,\mu_s)}.
\end{align*}
Stein interpolation theorem now yields
\begin{eqnarray*}
\|P_O(t,s)f-m_s(f)\|_{L^p(\R^N,\mu_t)}\le C_pe^{-c_p(t-s)}\|f\|_{L^p(\R^N,\mu_s)},
\end{eqnarray*}
for any $p\in (1,+\infty)$, where
\begin{eqnarray*}
c_p=\left\{
\begin{array}{ll}
2\left (1-\frac{1}{p}\right )c_o, & p\in (1,2),\\[2mm]
\frac{2}{p}c_0, & p\in [2,+\infty),
\end{array}
\right.
\qquad
C_p=\left\{
\begin{array}{ll}
2^{\frac{2}{p}-1}, & p\in (1,2),\\[1mm]
2^{1-\frac{2}{p}}, & p\in [2,+\infty).
\end{array}
\right.
\end{eqnarray*}

For more general nonautonomous operators, the asymptotic behaviour of $P(t,s)$ is well understood in the
case when coefficients are time-periodic (see \cite{lorenzi-lunardi-zamboni}). Very recently, some of the results in
\cite{lorenzi-lunardi-zamboni} have been proved in the nonperiodic case (see
\cite{angiuli-lorenzi-lunardi}) when the diffusion coefficients are bounded and independent of the spatial variables.
The general nonperiodic case is still under investigation.

\subsection{The periodic case}
The key tool to prove Estimates \eqref{CompAs} and \eqref{CompAsInd}
is the analysis of the asymptotic behavior of the evolution
semigroup $(T(t))$ in the spaces $L^p(\R^{1+N},\mu^{\sharp})$,
where, we recall that the measure $\mu^{\sharp}$ is the only
probability measure which extends the function in \eqref{mu-sharp} to
the $\sigma$ algebra of all the Borel sets of $(0,T)\times\R^N$, and
$\{\mu^{\sharp}_s: s\in\R\}$ is the unique $T$-periodic evolution systems of invariant measures of
$(P(t,s))$.

We stress that the classical arguments for evolution semigroups (see e.g., the monograph \cite{chicone-latushkin}) cannot be applied
to study the long time behaviour of the function $P(t,s)f-m_sf$ in the $L^p$-spaces associated with the evolution system of invariant measures $\{\mu_s^{\sharp}: s\in\R\}$.
Indeed, the classical theory requires that $T(t)$ maps $L^p(\mathbb T;X)$ into itself, which of course is not the case
since $P(t,s)$ maps $L^p(\R^N,\mu_s)$ into $L^p(\R^N,\mu_t)$ and these $L^p$-spaces differ, in general.
Nevertheless, there is still a link between \eqref{CompAs},  \eqref{CompAsInd} and the asymptotic behaviour of the evolution semigroup $(T(t))$. This link  is made clear by
the following theorem.
\begin{theorem}[{\cite[Theorem 3.1]{lorenzi-lunardi-zamboni}}]
Suppose that Hypotheses $\ref{hyp1}(i)$-$(ii)$ and \eqref{cond-mis-inv} are satisfied.
For $1\leq p < +\infty$, consider the following statements:
\begin{enumerate}[\rm (i)]
\item
for any $f\in  L^p(\mathbb T \times \R^N, \mu^{\sharp})$ we have
\begin{equation}
\label{CompAsT(t)}
\lim_{t\to +\infty} \|T(t)(f-\Pi f) \|_{L^p(\mathbb T \times \R^N, \mu^{\sharp})} =0;
\end{equation}
\item
for any $ \varphi  \in C_b(\R^N)$ we have
\begin{eqnarray*}
\exists/\forall t\in \R, \quad  \lim_{s\to -
\infty} \| P(t,s)\varphi - m_s\varphi\|_{L^p(\R^N, \mu_t^{\sharp})} =0;
\end{eqnarray*}
\item
for some/any  $s\in \R$   we have
\begin{eqnarray*}
\lim_{t\to +\infty} \| P(t,s)\varphi -
m_s\varphi\|_{L^p(\R^N, \mu_t^{\sharp})} =0, \quad  \varphi  \in L^p(\R^N,
\mu_s^{\sharp});
\end{eqnarray*}
\item
for  any $ \varphi  \in C_b(\R^N)$   we have
\begin{eqnarray*}
\exists /\forall t \in \R, \quad \lim_{s\to
- \infty} \| P(t,s)\varphi - m_s\varphi\|_{L^{\infty}(B_R)} =0,
\quad   R>0;
\end{eqnarray*}
\item
for some/any $s\in \R$   we have
\begin{eqnarray*}
\lim_{t\to +\infty} \| P(t,s)\varphi -
m_s\varphi\|_{L^{\infty}(B_R)} =0, \quad     \varphi  \in
C_b(\R^N), \;R>0.
\end{eqnarray*}
\end{enumerate}
For every $p\in [1,+\infty)$, statements $(i)$, $(ii)$, $(iii)$ are equivalent, and they are implied by statements $(iv)$ and $(v)$.
If in addition Hypothesis $\ref{hyp10}$ holds, for every $p\in [1,+\infty)$ statements $(i)$ to $(v)$ are equivalent.
\end{theorem}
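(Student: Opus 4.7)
The starting point is the contractive projection $\Pi \colon L^p(\mathbb T \times \R^N,\mu^\sharp) \to L^p(\mathbb T \times \R^N,\mu^\sharp)$ given by $(\Pi f)(s,x) := m_s(f(s,\cdot))$, which is well defined by Jensen's inequality. A direct computation from \eqref{sem-evol} together with the invariance \eqref{esim} shows that $T(t)$ and $\Pi$ commute and in fact $(T(t)\Pi f)(s,x) = (\Pi f)(s-t,x)$, so $T(t)$ preserves $\ker \Pi$ and statement (i) is precisely the decay of $T(t)|_{\ker \Pi}$. The structural ingredients used throughout are this identity, the $L^p$-contractivity $\|P(t,s)\varphi\|_{L^p(\mu_t^\sharp)} \le \|\varphi\|_{L^p(\mu_s^\sharp)}$ from \eqref{star} applied to $\mu^\sharp$, and the periodicity relations $P(t+T,s+T)=P(t,s)$, $\mu^\sharp_{s+T}=\mu^\sharp_s$.

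For (iii) $\Rightarrow$ (i), set $g_r(x) := f(r,x) - m_r(f(r,\cdot))$, so that $m_r g_r = 0$ for every $r$; Fubini and the definition of $T(t)$ give
\begin{equation*}
\|T(t)(f-\Pi f)\|_{L^p(\mu^\sharp)}^p = \frac{1}{T}\int_0^T \|P(s,s-t)g_{s-t} - m_{s-t}g_{s-t}\|_{L^p(\R^N,\mu_s^\sharp)}^p\,ds.
\end{equation*}
For each $s \in [0,T]$ and $t$ large, choose the unique $k = k(s,t) \in \N$ with $\tilde r := s-t+kT \in [0,T)$; the $T$-periodicity identifies the integrand with $\|P(s+kT,\tilde r) g_{\tilde r} - m_{\tilde r}g_{\tilde r}\|_{L^p(\R^N,\mu_{s+kT}^\sharp)}^p$, which vanishes as $s+kT \to +\infty$ by (iii) applied at the base point $\tilde r \in [0,T]$. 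The pointwise majorant $2^p \|g_{\tilde r}\|_{L^p(\mu_{\tilde r}^\sharp)}^p$ is integrable in $s \in [0,T]$ thanks to periodicity and $f \in L^p(\mu^\sharp)$, so dominated convergence concludes.

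For (i) $\Rightarrow$ (iii), fix $s_0 \in \R$ and $\varphi \in C_b(\R^N)$ and plug the $T$-periodic extension of $(r,x) \mapsto \varepsilon^{-1}\rho(\varepsilon^{-1}(r-s_0))\varphi(x)$, with $\rho$ a smooth compactly supported probability density, into (i). Unwinding the definition of $T(t)$ as above and using contractivity plus continuity of $s \mapsto P(t,s)\varphi$ to absorb the $s$-concentration error, one extracts $\|P(t,s_0)\varphi - m_{s_0}\varphi\|_{L^p(\mu_t^\sharp)} \to 0$ after sending $\varepsilon \to 0$; density of $C_b(\R^N)$ in $L^p(\R^N,\mu_{s_0}^\sharp)$ and contractivity then propagate to all $\varphi \in L^p(\R^N,\mu_{s_0}^\sharp)$. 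The equivalence (i) $\Leftrightarrow$ (ii) is analogous: writing $s_n \to -\infty$ as $s_n = \tilde s_n - k_n T$ with $\tilde s_n \in [0,T)$ converts the backward limit into a forward one at the bounded base $\tilde s_n$, and the same two arguments apply with the roles of the endpoints interchanged. The ``some/any'' interchanges in (ii), (iii) are then automatic from the chain (some) $\Rightarrow$ (i) $\Rightarrow$ (any).

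Finally, (iv) $\Rightarrow$ (ii) and (v) $\Rightarrow$ (iii) follow by splitting the $L^p(\mu_t^\sharp)$-norm into the integrals over $B_R$ and $B_R^c$: the first vanishes in the limit by the uniform-on-$B_R$ convergence assumed, while the second is bounded by $2^p\|\varphi\|_\infty^p \mu_t^\sharp(B_R^c)$, which is made small uniformly in $t$ using the tightness of $\{\mu_t^\sharp\}_{t\in\R}$ (immediate from \eqref{cond-mis-inv} together with the periodicity); density of $C_b(\R^N)$ then yields the conclusion for all $\varphi \in L^p(\R^N,\mu_s^\sharp)$. Under Hypothesis~\ref{hyp10}, the pointwise gradient estimate \eqref{grad-punt-1} renders $\{P(t,s)\varphi - m_s\varphi\}_{t \ge s+1}$ equicontinuous on each $B_R$; combined with the $L^p(\mu_t^\sharp)$-convergence furnished by (iii) and the strict positivity of the density of $\mu_t^\sharp$, this upgrades to uniform convergence on $B_R$, closing the loop to (iv) and (v). The most delicate point is the dominated-convergence step in (iii) $\Rightarrow$ (i), where the periodicity shift must be controlled uniformly in $s \in [0,T]$ while both arguments of $P$ drift.
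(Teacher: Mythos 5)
The survey quotes this theorem from \cite{lorenzi-lunardi-zamboni} without reproducing its proof, so your proposal can only be measured against the toolkit the survey itself supplies. With that caveat: your architecture is sound and consistent with that toolkit --- the commutation of $T(t)$ with $\Pi$, the rewriting of $\|T(t)(f-\Pi f)\|_{L^p(\mu^{\sharp})}^p$ as an $s$-average of the quantities appearing in (ii)/(iii) via $T$-periodicity, the $B_R$/$B_R^c$ splitting with tightness of $\{\mu^{\sharp}_t\}$ for (iv),(v) $\Rightarrow$ (ii),(iii), and the upgrade from $L^p(\mu_t^{\sharp})$- to locally uniform convergence via \eqref{grad-punt-1} (applied over a unit time step) together with the continuity and strict positivity of the periodic density. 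Two steps, however, do not close as written.

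First, (i) $\Rightarrow$ (iii). Applying (i) to $f_{\varepsilon}$ only yields, for each fixed $\varepsilon$, the vanishing as $t\to+\infty$ of a weighted $\sigma$-average of $H(\sigma,t):=\|P(\sigma+t,\sigma)\varphi-m_{\sigma}\varphi\|^p_{L^p(\R^N,\mu_{\sigma+t}^{\sharp})}$ concentrated near $\sigma=s_0$; to extract $\lim_{t\to+\infty}H(s_0,t)=0$ you must compare $H(\sigma,t)$ with $H(s_0,\cdot)$ \emph{uniformly in $t$}, and continuity of $s\mapsto P(t,s)\varphi$ for each fixed $t$ does not provide that. The missing structural fact is the monotonicity of $t'\mapsto\|P(t',s_0)\varphi-m_{s_0}\varphi\|_{L^p(\R^N,\mu_{t'}^{\sharp})}$, which follows from $P(t'',t')\mathbf{1}=\mathbf{1}$ and the contractivity \eqref{star}; once you have it, it is cleaner to drop the mollifier and test (i) against the $T$-periodic extension of $(r,x)\mapsto (P(r,s_0)\varphi)(x)-m_{s_0}\varphi$ (a function of the type generating the core ${\mathscr C}$ in Theorem \ref{thm-cores}(ii)), which gives $\|T(t)f\|_{L^p(\mu^{\sharp})}^p\ge h(s_0+T+t)$ with $h(t')=\|P(t',s_0)\varphi-m_{s_0}\varphi\|^p_{L^p(\mu_{t'}^{\sharp})}$. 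The same monotonicity is what legitimises passing from the averaged information furnished by (i) to the pointwise-in-$t$ statement (ii); ``the same two arguments with the endpoints interchanged'' is too quick there. Second, your treatment of ``some/any'' in (iii) is circular: your proof of (iii) $\Rightarrow$ (i) invokes (iii) at every base point $\tilde r\in[0,T)$, so you cannot afterwards deduce ``some $\Rightarrow$ any'' from the chain (some) $\Rightarrow$ (i) $\Rightarrow$ (any). The fix is direct: for $s_1<s_0$ write $P(t,s_1)\varphi-m_{s_1}\varphi=P(t,s_0)\psi-m_{s_0}\psi$ with $\psi=P(s_0,s_1)\varphi\in L^p(\R^N,\mu_{s_0}^{\sharp})$, using \eqref{esim}, and combine with periodicity to reach every base point from a single one. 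A last small point: in the dominated-convergence step of (iii) $\Rightarrow$ (i) your majorant depends on $t$ through $\tilde r(s,t)$; perform the measure-preserving change of variables $s\mapsto\tilde r$ first, so the dominating function becomes fixed.
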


Here, $\Pi$ is the projection on $L^p(\R^N,\mu^{\sharp})$ defined by $(\Pi f)(s,x)=m_s(f)$ for any $(s,x)\in\R^{1+N}$.
Note that $\Pi$ commutes with the semigroup $(T(t))$.

\begin{remark}
{\rm The convergence of $P(t,s)\varphi-m_s\varphi$ to zero is not uniform in
$\R^N$, in general, for $\varphi\in C_b(\R^N)$.
Take for instance any Ornstein-Uhlenbeck operator
\begin{eqnarray*}
({\mathscr A}_O\varphi)(x) =\sum_{i,j=1}^Nq_{ij}D_{ij}\varphi(x) + \sum_{i,j=1}^Nb_{ij}x_jD_i\varphi(x),
\end{eqnarray*}
where $Q $ is  symmetric and positive definite   and all the
eigenvalues of  $B $ have negative real part. Then, $P_O(t,s)=T(t-s)$ and
$\mu_t=\mu$ where $\mu$ is the invariant measure of the associated autonomous
Ornstein-Uhlenbeck operator.
Let $f=e^{i\langle\cdot,h\rangle}$ for some $h\in\R^N\setminus\{0\}$. Then,
\begin{eqnarray*}
P_O(t,s)f = \exp\left (-\langle Q_{t-s}h,h\rangle
+i\langle \cdot,e^{(t-s)B^*}h\rangle\right ),\qquad\;\,s<t,
\end{eqnarray*}
where $Q_{r} :=  \int_0^{r}
e^{\sigma B}Qe^{\sigma B^*}d\sigma$.
Since
$m_s(f) = e^{-\langle Q_{\infty} h,h\rangle}$, it holds that
\begin{align*}
P_O(t,s)f - m_s(f) =&
\left \{ \exp\left (-\langle Q_{t-s}  h,h\rangle\right )
- \exp\left (-\langle Q_{\infty}  h,h\rangle\right )\right\}e^{i\langle \cdot,e^{(t-s)B^*}h\rangle}\\
&+ \exp\left (-\langle
Q_{\infty} h,h\rangle\right ) \left (\exp( i\langle
\cdot,e^{(t-s)B^*}h\rangle)-1\right ),
\end{align*}
for any $t>s$. Note that
\begin{eqnarray*}
\sup_{x\in\R^N}\left |\exp( i\langle
\cdot,e^{(t-s)B^*}h\rangle)-1\right |=2.
\end{eqnarray*}
Hence,
\begin{align*}
\lim_{t\to +\infty}\|P_O(t,s)f - m_s(f)\|_{\infty} = \lim_{s\to -\infty}\|P_O(t,s)f - m_s(f)\|_{\infty}=2\exp\left (-\langle
Q_{\infty} h,h\rangle\right ).
\end{align*}
}
\end{remark}

\begin{remark}
{\rm Let us consider the formula
\begin{equation}
\lim_{t\to +\infty} \| P(t,s)\varphi - m_s\varphi\|_{L^p(\R^N,
\mu_t^{\sharp})} =0, \quad  \varphi  \in L^p(\R^N, \mu_s^{\sharp}).
\label{conv-to-zero-L^p}
\end{equation}
Here, $t$ appears both in the evolution operator and in the measure
$\mu_t$, so that one might wonder that the convergence to zero of
$\| P(t,s)\varphi - m_s\varphi\|_{L^p(\R^N, \mu_t^{\sharp})}$ is due
to the convergence to zero of the density $\varrho^{\sharp}$ of the
measure $\mu^{\sharp}$ as $t\to +\infty$, this making Formula
\eqref{conv-to-zero-L^p} somehow trivial. (Note that the density of
$\mu_t^{\sharp}$ is the function $\varrho^{\sharp}(t,\cdot)$ by the
disintegration theorem for measures.) But this is not the case.
Indeed, since $\{\mu_t^{\sharp}: t\in\R\}$ is a $T$-periodic
evolution systems of measures, it turns out that
$\varrho^{\sharp}(t+T,\cdot)=\varrho^{\sharp}(t,\cdot)$ for any $t$,
so that $\varrho^{\sharp}(t,\cdot)$ cannot vanish as $t\to +\infty$.}
\end{remark}

Also the exponential convergence to zero of
$\|P(t,s)f-m_s(f)\|_{L^p(\R^N,\mu_t^{\sharp})}$ as $t-s\to +\infty$ can be related to the exponential convergence to
zero of the function $T(t)(f-\Pi f)$ as the following theorem shows.

\begin{theorem}[{\cite[Theorem 3.2]{lorenzi-lunardi-zamboni}}]
Let Hypothesis $\ref{hyp1}$ hold. Fix $1\leq p
\leq +\infty$, $M>0$, $\omega \in \R$. The following conditions are
equivalent:
 \begin{enumerate}[\rm (a)]
 \item
 for every $t>0$ and $u\in  L^p(\mathbb T \times \R^N,\mu^{\sharp})$,
\begin{eqnarray*}
\;\;\|T(t)(I-\Pi)u\|_{L^p(\mathbb T\times\R^N,\mu^{\sharp})}\leq Me^{\omega t} \|u\|_{L^p(\mathbb T\times\R^N,\mu^{\sharp})};
\end{eqnarray*}
 \item
for every $t>s$ and $ \varphi \in L^p(\R^N, \mu_s^{\sharp})$,
\begin{eqnarray*}
\;\;\;\| P(t,s)\varphi  - m_s(\varphi)\|_{L^p(\R^N, \mu_t^{\sharp})} \leq
Me^{\omega(t-s)}\|\varphi \|_{L^p(\R^N, \mu_s^{\sharp})}.
\label{2}
\end{eqnarray*}
 \end{enumerate}
\end{theorem}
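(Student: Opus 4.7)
The plan is to base everything on two elementary identities. First, since constants are fixed by every $P(t,s)$, the projection $\Pi$ acts as $(\Pi u)(r,x)=m_r(u(r,\cdot))$, so
\begin{equation*}
T(t)(I-\Pi)u(s,x)=P(s,s-t)u(s-t,\cdot)(x)-m_{s-t}(u(s-t,\cdot)),
\end{equation*}
by the very definition \eqref{sem-evol} of the evolution semigroup together with the fact that $P(s,s-t)$ leaves constants invariant. Second, the defining formula \eqref{mu-sharp} for $\mu^{\sharp}$ gives the disintegration
\begin{equation*}
\|w\|^p_{L^p(\mathbb T\times\R^N,\mu^{\sharp})}=\frac{1}{T}\int_0^T\|w(s,\cdot)\|^p_{L^p(\R^N,\mu_s^{\sharp})}\,ds.
\end{equation*}

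For $(b)\Rightarrow(a)$, I would simply combine the two identities: expanding $\|T(t)(I-\Pi)u\|_{L^p(\mu^{\sharp})}^p$ via the disintegration, applying (b) to the slice $\varphi=u(s-t,\cdot)\in L^p(\R^N,\mu_{s-t}^{\sharp})$ for almost every $s$, and then using the $T$-periodicity of $s\mapsto \mu_s^{\sharp}$ to undo the shift $r=s-t$ in the resulting integral. This yields (a) with exactly the same constants.

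For $(a)\Rightarrow(b)$ I would first restrict to $\varphi\in C_b(\R^N)$ and fix $s_0\in\R$. The strategy is to \emph{localize in} $s$: choose a nonnegative $\alpha_\varepsilon\in C_c(\R)$, supported in a $T$-small interval around $s_0$ (translated by a multiple of $T$ to lie in $(0,T)$), and extend $T$-periodically. Setting $u_\varepsilon(r,x)=\alpha_\varepsilon(r)\varphi(x)$ and using the two identities, the inequality in (a) applied to $u_\varepsilon$ reduces to
\begin{equation*}
\int_0^T\alpha_\varepsilon(r)^p\|P(r+t,r)\varphi-m_r(\varphi)\|^p_{L^p(\mu_{r+t}^{\sharp})}\,dr\le M^pe^{\omega pt}\int_0^T\alpha_\varepsilon(r)^p\|\varphi\|^p_{L^p(\mu_r^{\sharp})}\,dr.
\end{equation*}
Applying Lebesgue's differentiation theorem (with $\alpha_\varepsilon^p$ playing the role of a normalized approximate identity) gives (b) at $s=s_0$ for almost every $s_0$ and every $\varphi\in C_b(\R^N)$. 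I would upgrade this to every $s_0\in\R$ by continuity of both sides in $s$: the right-hand side is continuous in $s$ because the family $\{\mu_s^{\sharp}\}$ is weakly continuous (which in turn follows from $\mu_s^{\sharp}=P(r,s)^\ast\mu_r^{\sharp}$ combined with the continuity of $s\mapsto P(r,s)f$ established in Proposition \ref{prop-2.5}); the left-hand side is continuous by the same token, using in addition the joint continuity of $(s,x)\mapsto(P(s+t,s)\varphi)(x)$. Finally, (b) extends from $C_b(\R^N)$ to arbitrary $\varphi\in L^p(\R^N,\mu_s^{\sharp})$ by density, using that $P(t,s)$ is a contraction from $L^p(\R^N,\mu_s^{\sharp})$ into $L^p(\R^N,\mu_t^{\sharp})$ (by \eqref{star}) and that $m_s$ is continuous on $L^p(\R^N,\mu_s^{\sharp})$.

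The main obstacle is the passage from the integrated inequality in (a) to the pointwise-in-$s$ inequality in (b). The Lebesgue-differentiation step gives the estimate only for almost every $s_0$, and promoting it to every $s_0$ requires a continuity argument for the $s$-dependent $L^p$-norms, which mixes the continuity of the evolution operator in the parameter $s$ with the weak continuity of the invariant family $\{\mu_s^{\sharp}\}$. The density step must also be handled with some care because the $L^p$-space on which $\varphi$ lives is itself $s$-dependent, so the contractivity bound \eqref{star} is essential there.
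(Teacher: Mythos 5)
The survey states this theorem without proof (it only cites \cite[Theorem 3.2]{lorenzi-lunardi-zamboni}), so there is no in-paper argument to measure you against; your proposal is correct in outline and follows the natural route --- disintegration of $\mu^{\sharp}$ along the slices $\mu_s^{\sharp}$ combined with localization in $s$ by test functions $\alpha_\varepsilon(s)\varphi(x)$ --- which is essentially the strategy of the cited source. The two identities you start from are exactly right: $T(t)(I-\Pi)u(s,\cdot)=P(s,s-t)u(s-t,\cdot)-m_{s-t}(u(s-t,\cdot))$ because $P(s,s-t)\mathbf{1}=\mathbf{1}$, and the $T$-periodicity of $r\mapsto\mu_r^{\sharp}$ is what lets you undo the shift $r=s-t$ in both directions.

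Three technical points deserve more care than you give them, though none is a genuine gap. First, in the continuity upgrade for $(a)\Rightarrow(b)$ you need to pass to the limit in integrals of the form $\int_{\R^N}f_\varepsilon\,d\mu_{s_\varepsilon+t}^{\sharp}$ where both the integrand and the measure vary; weak continuity of $s\mapsto\mu_s^{\sharp}$ alone does not suffice when $f_\varepsilon\to f$ only locally uniformly --- you also need tightness of $\{\mu_s^{\sharp}:s\in[0,T]\}$, which is available from the uniform bound $\sup_s\int_{\R^N}\varphi\,d\mu_s^{\sharp}<+\infty$ for the Lyapunov function of Condition \eqref{cond-mis-inv} (note that this condition, not just Hypothesis \ref{hyp1}, is implicitly in force since it is what produces $\mu^{\sharp}$ in the first place). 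Second, in $(b)\Rightarrow(a)$ you apply the sliced formula for $T(t)u$ to a general $u\in L^p(\mathbb T\times\R^N,\mu^{\sharp})$; strictly speaking $T(t)$ on $L^p$ is the density extension of the operator defined on bounded continuous functions, so you should either verify that the extension is still given slice-wise (via \eqref{star} and Fubini) or, more simply, prove the inequality for $u$ in a dense class such as $C_b(\mathbb T\times\R^N)$ and conclude by density. Third, the theorem is stated for $1\le p\le+\infty$, and your disintegration identity is the $p<+\infty$ one; for $p=+\infty$ it must be replaced by $\|w\|_{L^\infty(\mu^{\sharp})}=\operatorname*{ess\,sup}_s\|w(s,\cdot)\|_{L^\infty(\R^N,\mu_s^{\sharp})}$, after which the same argument goes through.
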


Since $T(t)$ commutes with $\Pi$ for any $t>0$, $(T(t)(I-\Pi))$ is nothing but the part of $(T(t))$ in $(I-\Pi)(L^p(\mathbb T\times\R^N,\mu^{\sharp}))$.
Hence, $T(t)(I-\Pi)$ converges to zero with exponential rate if and only if the
growth bound of the semigroup $(T(t)(I-\Pi))$ is negative or, equivalently,
if the spectral bound of $G_p^{\sharp}$ is negative, since $(T(t))$ and its part in $(I-\Pi)(L^p(\mathbb T\times\R^N,\mu^{\sharp}))$ satisfy the
spectral mapping theorem (see \cite[Theorem 2.17 \& 3.15]{lorenzi-lunardi-zamboni}). Computing explicitly the spectrum/growth bound
is an hard task in general. It has been computed in the case of the nonautonomous
Ornstein-Uhlenbeck operator.

\begin{theorem}[{\cite[Corollary 2.11]{geissert-lunardi-2}}]
The growth bound of the part of $(T(t))$ in $(I-\Pi)(L^2(\mathbb T\times\R^N,\mu^{\sharp}))$ is
$\omega_0$ $($where, we recall,
$\omega_0$ is the supremum of the constant $\omega$ such that \eqref{cond-U}$)$.
\end{theorem}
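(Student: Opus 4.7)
The plan is to use the equivalence $(a)\Leftrightarrow(b)$ in the preceding theorem with $p=2$, which identifies the growth bound (equivalently, the optimal exponential decay rate) of the restriction of $(T(t))$ to $(I-\Pi)L^2(\T\times\R^N,\mu^{\sharp})$ with the optimal constant $\omega$ for which
\begin{equation}
\|P_O(t,s)\varphi-m_s(\varphi)\|_{L^2(\R^N,\mu_t^{\sharp})}\le M(\omega)\,e^{-\omega(t-s)}\,\|\varphi\|_{L^2(\R^N,\mu_s^{\sharp})}
\label{plan-eq}
\end{equation}
holds for every $s<t$ and every $\varphi\in L^2(\R^N,\mu_s^{\sharp})$. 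The task is thus to pin down the sharp value of $\omega$ in \eqref{plan-eq} and show that it equals $\omega_0$.

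For the sharpness (the rate cannot exceed $\omega_0$), I would test \eqref{plan-eq} on the linear functions $\varphi_h(x)=\langle x,h\rangle$ with $h\in\R^N\setminus\{0\}$. Since $\mu_s^{\sharp}$ is a centered Gaussian with covariance $2Q_s$, integrating \eqref{nonaut-OU} gives $m_s(\varphi_h)=0$ and $(P_O(t,s)\varphi_h)(x)=\langle x,U(s,t)^*h\rangle$, so
\begin{eqnarray*}
\frac{\|P_O(t,s)\varphi_h-m_s\varphi_h\|^2_{L^2(\mu_t^{\sharp})}}{\|\varphi_h\|^2_{L^2(\mu_s^{\sharp})}}=\frac{\langle Q_t\,U(s,t)^*h,U(s,t)^*h\rangle}{\langle Q_sh,h\rangle}.
\end{eqnarray*}
By $T$-periodicity and the uniform positive-definiteness of $s\mapsto Q_s$, this ratio is comparable to $\|U(s,t)^*h\|^2/\|h\|^2$. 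Hence the operator norm of $P_O(t,s)|_{(I-\Pi)L^2}$ is bounded below by a fixed multiple of $\|U(s,t)\|$, and by the very definition of $\omega_0$ no rate strictly better than $\omega_0$ can hold in \eqref{plan-eq}.

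For the matching upper bound, fix $\omega<\omega_0$ and $C_0=C_0(\omega)$ with $\|U(s,t)\|\le C_0\,e^{-\omega(t-s)}$ for $s\le t$. Applying \eqref{nonaut-OU} to the characters $f_h(x)=e^{i\langle x,h\rangle}$ yields $(P_O(t,s)f_h)(x)=e^{-\langle Q_{t,s}h,h\rangle}e^{i\langle x,U(s,t)^*h\rangle}$, and combined with the elementary factorization
\begin{eqnarray*}
Q_s=Q_{t,s}+U(s,t)\,Q_t\,U(s,t)^*
\end{eqnarray*}
(proved by splitting the integral defining $Q_s$ at $t$ and using the cocycle identity $U(s,\xi)=U(s,t)U(t,\xi)$), a direct computation gives the closed form
\begin{eqnarray*}
\|P_O(t,s)(f_h-m_sf_h)\|^2_{L^2(\mu_t^{\sharp})}=e^{-2\langle Q_{t,s}h,h\rangle}-e^{-2\langle Q_sh,h\rangle}\le 2\,\|Q_t\|\,\|U(s,t)\|^2\,\|h\|^2.
\end{eqnarray*}
I would then promote this character-level bound to \eqref{plan-eq} on all of $(I-\Pi)L^2$ by expanding a general $\varphi\in L^2(\R^N,\mu_s^{\sharp})$ in the orthonormal basis of Hermite polynomials adapted to $\mu_s^{\sharp}$: the degree-$1$ block is governed by the computation above and decays at rate $\omega$, and by standard Hermite analysis (or a Floquet reduction to the autonomous case) the degree-$n$ block, which is built from $n$-fold symmetric tensor powers of the degree-$1$ action, decays at least as fast as $n\omega$. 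Summing the block contributions yields \eqref{plan-eq} with exponent $-\omega$ and a constant $M(\omega)$ uniform in $s\in[0,T]$.

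The main obstacle is the upper bound step. The estimate \eqref{estim-GL} obtained in Proposition~2.17 of \cite{geissert-lunardi-2} only delivers the suboptimal rate $c_0<\omega_0$, so one must exploit the full spectral structure of the nonautonomous Ornstein-Uhlenbeck semigroup: the upper-triangular action of $P_O(t,s)$ on the Hermite ladder attached to the $s$-dependent Gaussians $\mu_s^{\sharp}$, together with uniform-in-$s$ control of the off-diagonal couplings, which is precisely what periodicity of the coefficients of $\mathscr{A}_O$ ensures. An alternative route is to invoke the spectral mapping theorem alluded to in the paragraph preceding the statement and to compute $\sigma(G_2^{\sharp})$ directly from the Floquet spectrum of the ODE $\dot x=-B(s)x$; in either approach the answer is dictated by the degree-$1$ level and matches $\omega_0$ exactly.
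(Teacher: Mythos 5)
This theorem is quoted in the survey from \cite[Corollary 2.11]{geissert-lunardi-2} without proof, so there is no in-paper argument to compare against; I am assessing your proposal on its own. The reduction via the equivalence $(a)\Leftrightarrow(b)$ of the preceding theorem with $p=2$ is legitimate, and your lower bound is essentially complete: testing on $\varphi_h(x)=\langle x,h\rangle$, using that $\mu_s^{\sharp}$ is centered Gaussian with covariance $2Q_s$ and that $s\mapsto Q_s$ is $T$-periodic, continuous and positive definite (hence uniformly comparable to the identity), correctly forces the operator norm of $P_O(t,s)$ on mean-zero functions to dominate $c\|U(s,t)\|$, so no decay rate better than $\omega_0$ is possible. (Pedantically, with this convention the growth bound is $-\omega_0$; the statement is to be read as ``the optimal decay rate is $\omega_0$''.)

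The gap is in the upper bound, which is the whole content of the result. Your character computation is correct, but the functions $e^{i\langle\cdot,h\rangle}$ are not an orthogonal system in $L^2(\R^N,\mu_s^{\sharp})$, so the bound $e^{-2\langle Q_{t,s}h,h\rangle}-e^{-2\langle Q_sh,h\rangle}\le 2\|Q_t\|\,\|U(s,t)\|^2|h|^2$ gives no operator-norm information by itself. Everything is then delegated to the assertion that the degree-$n$ Hermite block ``decays at least as fast as $n\omega$'' with constants that are uniform in $s$ and summable over $n$. That assertion is precisely the hard technical point: $P_O(t,s)$ maps the chaos decomposition of $L^2(\mu_s^{\sharp})$ into that of $L^2(\mu_t^{\sharp})$ only in an upper-triangular fashion, the two decompositions are attached to different Gaussians, and neither the off-diagonal couplings nor the $n$-dependence of the constants is addressed, so as written the argument does not close. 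A much shorter route, and essentially the one in the cited reference, avoids chaos expansions altogether: differentiating \eqref{nonaut-OU} gives the exact commutation relation $\nabla_x P_O(t,s)f=U(s,t)^*P_O(t,s)\nabla f$, whence by \eqref{cond-U}, Jensen's inequality and the invariance \eqref{esim},
\begin{equation*}
\|\,|\nabla_xP_O(t,s)f|\,\|_{L^2(\R^N,\mu_t^{\sharp})}\le C_0e^{-\omega(t-s)}\|\,|\nabla f|\,\|_{L^2(\R^N,\mu_s^{\sharp})},\qquad t>s,
\end{equation*}
for every $\omega<\omega_0$. Combining this with the Gaussian Poincar\'e inequality for $\mu_t^{\sharp}=N(0,2Q_t)$, whose constant is controlled by $2\sup_{t}\|Q_t\|<+\infty$ thanks to $T$-periodicity, and with a preliminary smoothing step (apply $P_O(s+1,s)$ first to pass from $L^2$ to $W^{1,2}$, absorbing the loss into the constant $M(\omega)$) yields the decay at rate $\omega$ for every $\omega<\omega_0$, which together with your lower bound identifies the growth bound. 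I would recommend replacing the Hermite-ladder step with this argument, or else supplying the uniform triangular estimates it silently assumes.
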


For more general nonautonomous operators one can prove the following.

\begin{theorem}[{\cite[Theorem 3.6]{lorenzi-lunardi-zamboni}}]
Let  Hypotheses  $\ref{hyp1}$  and  $\ref{hyp10}$ hold.
Set
\begin{equation}
\label{omegap,gammap}
 \omega_{p}: =\inf A_p, \quad  \gamma_{p} :=\inf B_p,
\end{equation}
where
\begin{align*}
A_p : = \{ & \omega\in\R: \exists M_{\omega}>0 \;\;\mbox{s.t.}\\[1mm]
&\|T(t)(f-\Pi f)\|_{L^p(\mathbb T\times\R^N,\mu^{\sharp})}\le M_{\omega} e^{\omega t}\|f-\Pi f\|_{L^p(\mathbb T\times\R^N,\mu^{\sharp})}
\notag
\\
&\forall t\geq 0,\, f\in L^p(\mathbb T \times \R^N,\mu^{\sharp})\},\\[1mm]
B_p := \{ & \omega\in\R: \exists
N_{\omega}>0 \;\;\mbox{s.t.}\;\,\|\,|\nabla_x T(t)f|\, \|_{L^p(\mathbb T\times\R^N,\mu^{\sharp})}\le
N_{\omega} e^{\omega t}\|f\|_{L^p(\mathbb T\times\R^N,\mu^{\sharp})}\notag
\\
&\forall t\geq 1,\, f\in L^p(\mathbb T\times \R^N,\mu^{\sharp})\}.
\end{align*}
Then  $A_p\subset B_p$ for every $p\in (1,+\infty)$ such that
$\ell_p<+\infty$ $($see \eqref{ellp}$)$. If the diffusion coefficients are bounded, $B_p \subset A_p $ for every $p\geq 2$.
\end{theorem}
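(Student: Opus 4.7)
The plan is to prove the two inclusions separately, using the pointwise gradient estimates of Hypothesis~\ref{hyp10} and the invariance of $\mu^{\sharp}$ under $(T(t))$.

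For the inclusion $A_p \subset B_p$: applying the pointwise gradient estimate \eqref{grad-punt-1} to $P(s, s-1)$ (i.e., to $T(1)$), integrating against $\mu^{\sharp}$, and exploiting its invariance, yields
\[
\|\,|\nabla_x T(1) g|\,\|_{L^p(\mu^{\sharp})} \leq K_p \|g\|_{L^p(\mu^{\sharp})}.
\]
The crucial observation is that $T(1) \Pi g$ is independent of $x$: indeed, $\Pi g$ depends only on $s$, and $P(\tau, \tau-1)$ preserves constants in $x$. Hence $\nabla_x T(1) \Pi = 0$. Writing $T(t) = T(1) T(t-1)$ for $t \geq 1$ and using $\Pi T(t-1) = T(t-1) \Pi$,
\[
\nabla_x T(t) f = \nabla_x T(1) (I-\Pi) T(t-1) f = \nabla_x T(1) T(t-1) (I-\Pi) f.
\]
If $\omega \in A_p$, then $\|T(t-1)(I-\Pi) f\|_{L^p(\mu^{\sharp})} \leq M_\omega e^{\omega(t-1)} \|(I-\Pi) f\|_{L^p(\mu^{\sharp})} \leq 2 M_\omega e^{\omega(t-1)} \|f\|_{L^p(\mu^{\sharp})}$, and combining with the previous bound gives $\omega \in B_p$.

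For the inclusion $B_p \subset A_p$ with $p \geq 2$ and bounded diffusion: the strategy is to establish the uniform Poincar\'e inequality
\[
\|g - \Pi g\|_{L^p(\mu^{\sharp})} \leq C \,\|\,|\nabla_x g|\,\|_{L^p(\mu^{\sharp})}, \qquad g \in W^{0,1}_p(\mu^{\sharp}),
\]
then apply it to $g = T(t) f$, using $\nabla_x T(t) \Pi f = 0$ and the $B_p$-hypothesis; replacing $f$ by $(I-\Pi) f$ in the resulting estimate yields the $A_p$-membership of $\omega$. To derive the Poincar\'e inequality I would differentiate $F(t) := \|T(t) g\|_{L^p(\mu^{\sharp})}^p$ for $g \in (I-\Pi) L^p(\mu^{\sharp})$ belonging to a core of $G_p^{\sharp}$ (such as those provided by Theorem~\ref{thm-cores}). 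Differentiating the invariance $\int P(t,s) f \, d\mu_t^{\sharp} = \int f \, d\mu_s^{\sharp}$ in $t$ yields the Fokker--Planck equation $D_s \varrho^{\sharp} + \mathscr{A}^*(s) \varrho^{\sharp} = 0$ for the density $\varrho^{\sharp}$ of $\mu^{\sharp}$. An integration by parts on $\mathbb T \times \R^N$, with $T$-periodicity cancelling $D_s$-boundary terms, then produces the carr\'e-du-champ identity
\[
F'(t) = -p(p-1)\int |T(t) g|^{p-2} \langle Q \nabla_x T(t) g, \nabla_x T(t) g\rangle\, d\mu^{\sharp}.
\]
Bounding $\langle Q \xi, \xi\rangle \leq \|Q\|_\infty |\xi|^2$ and applying H\"older's inequality ($p \geq 2$), then passing to $(F^{2/p})'$ and integrating, leads to
\[
\|g\|_{L^p(\mu^{\sharp})}^2 \leq 2(p-1)\|Q\|_\infty \int_0^\infty \|\,|\nabla_x T(t) g|\,\|_{L^p(\mu^{\sharp})}^2\, dt.
\]
Splitting the integral at a large $T_0$, controlling $[0, T_0]$ via \eqref{grad-punt} (finite $\ell_p$) by a multiple of $\|\,|\nabla_x g|\,\|_{L^p(\mu^{\sharp})}^2$, and $[T_0,+\infty)$ via the $B_p$-bound by a multiple of $e^{2\omega T_0} \|g\|_{L^p(\mu^{\sharp})}^2$ (assuming $\omega < 0$), then absorbing the latter term into the left-hand side for $T_0$ large enough, yields the Poincar\'e inequality.

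Main obstacle: the Poincar\'e inequality is the central technical step. Rigorously justifying the integration by parts against the $T$-periodic density $\varrho^{\sharp}$ requires suitable regularity of $\varrho^{\sharp}$ and working on a core of $G_p^{\sharp}$, and care is needed in handling the limit $F(\infty) = 0$ when integrating on $[0,+\infty)$. The boundedness of the diffusion coefficients is indispensable for the upper bound on the carr\'e du champ, and the restriction $p \geq 2$ is forced both by the convexity needed in differentiating $|T(t) g|^p$ and by the H\"older step. For $\omega \in B_p$ with $\omega \geq 0$, the inclusion $\omega \in A_p$ is automatic from the contractivity of $(T(t))$ on $L^p(\mu^{\sharp})$.
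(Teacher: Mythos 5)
Your proposal is correct and follows essentially the same strategy as the proof in \cite{lorenzi-lunardi-zamboni}. The inclusion $A_p\subset B_p$ is exactly the intended argument: the integrated form of \eqref{grad-punt-1} at $t=1$ (which is where $\ell_p<+\infty$ enters), the identity $\nabla_x T(1)\Pi=0$, and the commutation of $\Pi$ with the semigroup. For $B_p\subset A_p$ you repackage the argument slightly: you first extract a Poincar\'e inequality $\|(I-\Pi)g\|_{L^p(\mu^{\sharp})}\le C\|\,|\nabla_xg|\,\|_{L^p(\mu^{\sharp})}$ and then apply it to $T(t)f$, whereas the original proof integrates the carr\'e-du-champ identity directly on $[t,+\infty)$, pulls the factor $\|T(r)(I-\Pi)f\|_p^{p-2}$ out by contractivity, and cancels it against the left-hand side; the two routes use the same ingredients (carr\'e du champ, $\langle Q\xi,\xi\rangle\le\Lambda|\xi|^2$, H\"older with exponents $p/(p-2)$ and $p/2$, hence $p\ge 2$, and the $B_p$ bound with $\omega<0$) and are interchangeable. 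One point you flag but do not close: the vanishing of $F$ at infinity, i.e. $\|T(t)(I-\Pi)f\|_{L^p(\mu^{\sharp})}\to 0$, is not something to be handled ``with care'' ad hoc --- it is precisely \eqref{CompAsT(t)}, i.e. Theorem \ref{thm-3.5}, which is available here because the diffusion coefficients are assumed bounded; without invoking it your integration over $[0,+\infty)$ only yields $\|g\|_p^2-F(\infty)^{2/p}$ on the left and the absorption argument would not close. With that reference supplied, the proof is complete; your closing remark that $\omega\ge 0$ in $B_p$ gives $\omega\in A_p$ trivially by contractivity is also the right way to dispose of the non-negative case.
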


We recall that, if Hypotheses \ref{hyp1} and \ref{hyp10} are satisfied
then
\begin{eqnarray*}
|\nabla_x P(t,s)\varphi(x)|^p \leq C^p \max\{(t-s)^{-p/2}, \,1\}
e^{p\ell_p(t-s)}P(t,s)|\varphi |^p(x),\qquad\;\,x\in\R^N,
\end{eqnarray*}
for any $\varphi\in C_b(\R^N)$. Hence, for any $f\in C_b(\R^{1+N})$ it follows that
\begin{align*}
\int_{(0,T)\times\R^N}|\nabla_xT(t)f|^pd\mu^{\sharp}=&
\frac{1}{T}\int_0^Tds\int_{\R^N}|\nabla_x P(s,s-t)f(s-t,\cdot)|^pd\mu_s^{\sharp}\\
\le&\frac{C^p}{T}e^{p\ell_pt}\int_0^Tds\int_{\R^N}P(s,s-t)|f(s-t,\cdot)|^pd\mu_s^{\sharp}\\
=&\frac{C^p}{T}e^{p\ell_pt}\int_0^Tds\int_{\R^N}|f(s-t,\cdot)|^pd\mu_{s-t}^{\sharp}\\
=&\frac{C^p}{T}e^{p\ell_pt}\int_0^Tds\int_{\R^N}|f(s,\cdot)|^pd\mu_s^{\sharp},
\end{align*}
for any $t\ge 1$. Hence,
\begin{eqnarray*}
\|\nabla_xT(t)f\|_{L^p((0,T)\times\R^N,\mu^{\sharp})}\le Ce^{\ell_pt}\|f\|_{L^p((0,T)\times\R^N,\mu^{\sharp})},\qquad\;\,t\ge 1.
\end{eqnarray*}
Clearly, this inequality can be extended to any $f\in L^p(\mathbb T\times\R^N,\mu^{\sharp})$ by density. This shows that
$\ell_p\in B_p$. Hence, a sufficient condition guaranteeing that
$\|P(t,s)f-m_s(f)\|_{L^p(\R^N,\mu_t^{\sharp})}$ decreases to zero as $t-s\to +\infty$ with exponential rate is that
$\ell_p<0$.

Even without the assumptions $\ell_p<0$ we can prove that the
function $\|T(t)(I-\Pi)f\|_{L^p({\mathbb
T}\times\R^N,\mu^{\sharp})}$ tends to $0$ as $t\to +\infty$. More
precisely, the following result holds true.

\begin{theorem}[Theorem 3.5 of \cite{lorenzi-lunardi-zamboni}]
\label{thm-3.5}
Let Hypotheses $\ref{hyp1}$ and \eqref{hyp10} be
satisfied. Further assume either that the diffusion coefficients of
the operator ${\mathscr A}$ are bounded or there exists a positive
constant $C$ such that
\begin{eqnarray*}
\| Q(s,x)\|_{L(\R^N)} \leq C(|x|+1) V(x), \;\; \langle b(s,x), x\rangle \leq C(|x|^2+1)V(x),
\end{eqnarray*}
for any $(s,x)\in\R^{1+N}$.
Then, for every $p\in [1,+\infty)$ Estimate \eqref{CompAsT(t)} holds
true.
\end{theorem}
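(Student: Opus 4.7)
The strategy is to reduce \eqref{CompAsT(t)} to the local uniform convergence statement $(v)$ in the equivalence theorem stated right above (\cite[Theorem~3.1]{lorenzi-lunardi-zamboni}). Since Hypothesis~\ref{hyp10} is in force, assertions $(i)$ through $(v)$ are equivalent for every $p\in[1,+\infty)$, so the proof reduces to showing that for some $s_0\in\R$, every $\varphi\in C_b(\R^N)$ and every $R>0$,
\[
\lim_{t\to+\infty}\|P(t,s_0)\varphi-m_{s_0}(\varphi)\|_{L^\infty(B_R)}=0.
\]

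The first ingredient is a uniform $C^1_b$-bound on the orbit. The semigroup identity $P(t,s_0)=P(t,t-1)P(t-1,s_0)$ together with the pointwise gradient estimate~\eqref{grad-punt-1} applied on the unit time step $[t-1,t]$ yields
\[
\|\nabla_x P(t,s_0)\varphi\|_\infty \le C_p^{1/p}e^{\ell_p}\|P(t-1,s_0)\varphi\|_\infty \le C_p^{1/p}e^{\ell_p}\|\varphi\|_\infty,
\]
uniformly for $t\ge s_0+1$, since the constant in~\eqref{grad-punt-1} depends only on the length of the time interval. Together with the contractivity $\|P(t,s_0)\varphi\|_\infty\le\|\varphi\|_\infty$, Arzelà--Ascoli guarantees that from every sequence $t_n\to+\infty$ a subsequence can be extracted along which $P(t_n,s_0)\varphi$ converges locally uniformly on $\R^N$ to some $g\in C^1_b(\R^N)$.

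The central step is to identify every such cluster point with the constant $m_{s_0}(\varphi)$. Writing $t_n=s_0+k_nT+r_n$ with $k_n\in\N$ and $r_n\in[0,T)$, after a further extraction so that $r_n\to r$, the $T$-periodicity of the coefficients factorises
\[
P(t_n,s_0)\varphi=P(s_0+r_n,s_0)\bigl[S^{k_n}\varphi\bigr],\qquad S:=P(s_0+T,s_0).
\]
The Poincaré operator $S$ is an irreducible strong Feller Markov operator (Proposition~\ref{prop-luca} and the gradient estimates) with unique invariant probability measure $\mu_{s_0}^{\sharp}$, so Doob's theorem delivers pointwise convergence $S^n\psi(x)\to m_{s_0}(\psi)$ for $\psi\in C_b(\R^N)$, and this upgrades to local uniform convergence thanks to the $C^1_b$-bound of the previous step. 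The additional assumption on the coefficients (bounded diffusion, or the growth bounds $\|Q\|\le C(|x|+1)V$ and $\langle b,x\rangle\le C(|x|^2+1)V$) enters precisely here: it secures, via $V$ as a Lyapunov function, tightness of the transition kernels $p_{s_0+r_n,s_0}(x,\cdot)$ uniformly for $x$ in compact sets. Hence a dominated convergence argument applied to the representation \eqref{repres} gives
\[
P(s_0+r_n,s_0)[S^{k_n}\varphi](x)\longrightarrow P(s_0+r,s_0)[m_{s_0}(\varphi)](x)=m_{s_0}(\varphi),
\]
locally uniformly in $x$, so that $g\equiv m_{s_0}(\varphi)$.

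Uniqueness of the subsequential limit upgrades subsequential to full convergence, which yields~$(v)$ and hence \eqref{CompAsT(t)}. The hard part is this identification step: lacking $\ell_p<0$, there is no direct exponential decay of $\nabla_x P(t,s_0)\varphi$, and one must lift the ergodicity of the discrete-time Poincaré semigroup $(S^n)$ to continuous time. The two alternative hypotheses on the coefficients are what provide, through a Lyapunov control on $\int V\,d\mu_t^{\sharp}$, the tightness of transition kernels that legitimises exchanging limit and integration against $p_{s_0+r_n,s_0}(x,\cdot)$.
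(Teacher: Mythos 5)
Your proof takes a genuinely different route from the paper's. The paper never passes through statement (v) of the equivalence theorem: it first reduces to $p=2$ by Stein interpolation and to $f$ in the core ${\mathscr C}$, then uses the carr\'e du champ inequality \eqref{carreduchamps} to show that $t\mapsto\|\,|\nabla_xT(t)f|\,\|^2_{L^2(\mathbb T\times\R^N,\mu^{\sharp})}$ lies in $W^{1,1}((0,+\infty))$ and hence vanishes at infinity, extracts a subsequential limit $g$ of $T(t_n)(I-\Pi)f$ by Arzel\`a--Ascoli, and concludes $g\equiv 0$ because $\nabla_xg=0$ forces $g$ to depend only on $s$ while $\Pi g=0$. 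You instead reduce \eqref{CompAsT(t)} to the local uniform convergence statement (v) and prove (v) through the ergodicity of the period map $S=P(s_0+T,s_0)$. The reduction is legitimate (Hypothesis \ref{hyp10} is assumed, so (v) implies (i)), the uniform $C^1_b$ bound from \eqref{grad-punt-1} is correctly derived, and the factorisation $P(t_n,s_0)=P(s_0+r_n,s_0)S^{k_n}$ together with Proposition \ref{prop-2.5} does handle the final passage to the limit.

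Two caveats, the second of which is substantive. First, the entire weight of your argument rests on the sentence ``Doob's theorem delivers $S^n\psi(x)\to m_{s_0}(\psi)$'', which is asserted, not proved. This is a discrete-time statement: for a single Markov operator, irreducibility plus the strong Feller property do not by themselves exclude periodic behaviour of the chain, and the classical formulations give convergence only for $\mu_{s_0}^{\sharp}$-a.e.\ $x$. You need to record that all the kernels $p_{s_0+T,s_0}(x,\cdot)$ are mutually equivalent (they have strictly positive densities, cf.\ Proposition \ref{prop-luca}(ii)), which both rules out periodicity and upgrades a.e.\ convergence to convergence at every $x$ via $S^{n+1}\psi(x)=\int_{\R^N}(S^n\psi)(y)\,p_{s_0+T,s_0}(x,dy)$ and dominated convergence. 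Second, the additional hypotheses of the theorem (bounded diffusion coefficients, or the growth bounds involving $V$) do not enter your proof where you claim they do: the tightness/continuity needed in your last step already follows from the standing Condition \eqref{cond-mis-inv} through Hypothesis \ref{hyp5} and Proposition \ref{prop-2.5}. In the paper those extra hypotheses are precisely what makes the inequality \eqref{carreduchamps} available; they are the engine of the proof, not a tightness device. As written, your argument uses them nowhere and would therefore prove a strictly stronger theorem than the one stated --- a discrepancy you should confront, either by completing the Doob step carefully and claiming the strengthening, or by locating where your argument actually needs these assumptions.
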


\begin{proof}
We sketch the proof since it can be applied also to the autonomous setting.

Some reductions are in order.
Of course, it is enough to prove \eqref{CompAsT(t)} in the case when $p=2$. Indeed, the general case when $p\neq 2$ then follows
by applying Stein interpolation theorem, since $(T(t))$ is bounded both in $L^1(\mathbb T\times\R^N,\mu^{\sharp})$
and in $L^{\infty}(\mathbb R\times\R^N,\mu^{\sharp})=L^{\infty}(\mathbb T\times\R^N)$. Moreover, it is enough to prove
\eqref{CompAsT(t)} for functions $f$ in the core ${\mathscr C}$ (see Theorem \ref{thm-cores}(ii)).

The proof consists of three steps.
\begin{description}
\item[{\it Step 1}] One shows that $\nabla_xT(t)f$ tends to $0$ as $t\to +\infty$ in $L^2(\mathbb T\times\R^N,\mu^{\sharp})$.
\item[{\it Step 2}] One proves that, from any sequence $(t_n)$
diverging to $+\infty$, one can extract a subsequence $(t_{n_k})$
such that $T(t_{n_k})(I-\Pi)f$ converges in $L^2(\mathbb
T\times\R^N,\mu^{\sharp})$ to some function $g\in L^2(\mathbb
T\times\R^N,\mu^{\sharp})$ as $k\to +\infty$.
\item[{\it Step 3}] Using Steps 1 and 2 one concludes that $g\equiv 0$.
\end{description}

To prove the convergence to zero of $\nabla_xT(t)f$ one takes advantage of the formula
\begin{equation}
 \int_{(0,T) \times \R^N }  \langle Q \nabla_x f, \nabla_x f
\rangle\,d \mu^{\sharp} \leq  - \int_{\mathbb T \times \R^N } f  G_p
f \, d \mu^{\sharp};
\label{carreduchamps}
\end{equation}
which holds true under our assumptions and is, in fact, an equality
if the diffusion coefficients are bounded. In this latter case
Formula \eqref{carreduchamps} is the so called \emph{identit\'e de
carr\'e du champ}.

Formula \eqref{carreduchamps} can be proved heuristically observing
that
\begin{equation}
\int_{(0,T)\times\R^N}{\mathscr G}u\,d\mu^{\sharp}=0,\qquad\;\,u\in
D(G_p). \label{infinitinvper}
\end{equation}
If we formally insert $u=f^2$ in \eqref{infinitinvper} and notice
that
\begin{eqnarray*}
{\mathscr G}(f^2)=2f{\mathscr G}f+\langle
Q\nabla_xf,\nabla_xf\rangle,
\end{eqnarray*}
we immediately end up with the identit\'e de carr\'e du champ
\begin{equation}
\int_{(0,T) \times \R^N }  \langle Q \nabla_x f, \nabla_x f
\rangle\,d \mu^{\sharp} =  - \int_{(0,T)\times \R^N } f\,  G_p f \,
d \mu^{\sharp}; \label{carreduchamps-1}
\end{equation}
The main issue is to make the previous argument rigorous. This is
easy in the case when the diffusion coefficients of the operator
${\mathscr A}$ are bounded. Indeed, in this case, the function $f^2$ is in
$D(G_{\infty}^{\sharp})$. Clearly $f^2$ is bounded and it belongs to
$W^{1,2}_{p,{\rm loc}}(\R^{1+N})$ for any $p<+\infty$. Moreover,
$\nabla_x f$ is bounded and continuous in $\R^{1+N}$, since the
evolution operator satisfies uniform gradient estimates. Hence, the
function ${\mathscr G}(f^2)$ is in $C_b(\mathbb T\times\R^N)$, thus
implying that it belongs to $D(G_{\infty}^{\sharp})\subset D(G_p^{\sharp})$.
The case when the diffusion coefficients are unbounded is a bit
trickier. Indeed, it is not clear if the function $\langle
Q\nabla_xf,\nabla_xf\rangle$ is bounded in $\R^{1+N}$. To overcome
such a difficult, one approximate the function $f^2$ by a sequence
of functions compactly supported in $x$. Taking the limit as $n\to
+\infty$ one ends up with formula \eqref{carreduchamps}.

Using inequality \eqref{carreduchamps} one then proves that
\begin{align*}
\|T(t)f\|_{L^2({\mathbb
T}\times\R^N,\mu^{\sharp})}^2-\|f\|_{L^2({\mathbb
T}\times\R^N,\mu^{\sharp})}^2
=&\int_0^t\frac{d}{dr}\|T(r)f\|_{L^2({\mathbb T}\times\R^N,\mu^{\sharp})}^2dr\\
=&\frac{2}{T}\int_0^tdr\int_{(0,T)\times\R^N}\langle T(r)f,T(r)G_2f\rangle d\mu^{\sharp}\\
\le &
-\frac{2}{T}\int_0^tdr\int_{(0,T)\times\R^N}|\nabla_xT(t)f|^2d\mu^{\sharp},
\end{align*}
for any $t>0$, from which we immediately get
\begin{eqnarray*}
\frac{2}{T}\int_0^tdr\int_{(0,T)\times\R^N}|\nabla_xT(t)f|^2d\mu^{\sharp}\le
\|f\|_{L^2({\mathbb T}\times\R^N,\mu^{\sharp})}^2,\qquad\;\,t>0,
\end{eqnarray*}
or, equivalently, that the function
\begin{eqnarray*}
\chi_f(t)=\int_{\mathbb
T\times\R^N}|\nabla_xT(r)f|^2d\mu^{\sharp},\qquad\;\,t>0
\end{eqnarray*}
is in $L^1((0,+\infty))$. The function $\chi_f$ is differentiable in $(0,+\infty)$ and
using the H\"older inequality, one can easily show that
\begin{eqnarray*}
|\chi_f'(t)|\le 2(\chi_f(t))^{1/2}\chi_{G_2^{\sharp}f}(t))^{1/2}\le \chi_f(t)+\chi_{G_2^{\sharp}f}(t),\qquad\;\,t>0.
\end{eqnarray*}
The same argument as above applied to the function $G_2^{\sharp}f$ shows that $\chi_{G_2^{\sharp}f}$
is in $L^1((0,+\infty))$. Hence, $\chi_f\in W^{1,1}((0,+\infty))$ and this implies that
$\chi_f$ tends to $0$ as $t\to +\infty$.

To prove that $T(t)f$ converges to $0$ as $t\to +\infty$ in
$L^2(\mathbb T\times\R^N,\mu)$ one can employ a compactness
argument. More specifically, let $f$ be the $T$-periodic (with
respect to $s$) extension of the function
$(s,x)\mapsto\alpha(s)(P(s,\tau)\chi)(x) $ defined in $[a, a
+T)\times\R^N$, where $\alpha$ and $\chi$ are compactly supported
functions with ${\rm supp}\alpha\subset (a, a+T)$. Hence,
$T(t)(f-\Pi f)$ is the $T$-periodic extension of the function
$(s,x)\mapsto \alpha(s-t)((P(s,\tau)\chi)(x)-m_{\tau}\chi)$. One
proves that the set $\{T(t)(I-\Pi)f: t>0\}$ is equibounded (this is
clear) and equicontinuous (this is a bit trickier). By
Arzel\`a-Ascoli theorem, there exists a sequence $(t_n)$ diverging
to $+\infty$ such that $T(t_n)(I-\Pi)f$ converges to a function
$g\in C_b(\mathbb T\times\R^N)$ locally uniformly in $\mathbb
T\times\R^N$. As a by product, $T(t_n)(I-\Pi)f$ converges to $g$ in
$L^2(\mathbb T\times\R^N,\mu^{\sharp})$ as $n\to +\infty$.

Next one shows that $g=0$ observing that $g\in (I-\Pi)(L^2(\mathbb
T\times\R^N,\mu^{\sharp}))$ and $\nabla_xg=0$ since $\nabla
T(t_n)(I-\Pi)f$ tends to $0$ in $L^2(\R^{1+N},\mu^{\sharp})$ as
$n\to +\infty$. (Note that $(I-\Pi)(L^2(\mathbb
T\times\R^N,\mu^{\sharp}))$ can be identified with $L^2({\mathbb
T})$.)
\end{proof}

\begin{remark}
{\rm We mention that the \emph{identit\'e de carr\'e du champ}
\eqref{carreduchamps-1} has been proved for $p> 2$ also in some
situation where the diffusion coefficients are unbounded. This is the case when
\begin{eqnarray*}
\| Q(s,x)\|_{L(\R^N)} \leq C(|x|+1) V(x), \;\; |\langle
b(s,x), x\rangle | \leq C(|x|^2+1) V(x),
\end{eqnarray*}
for any $(s,x)\in \R^{1+N}$ and some positive constant $C$. The \emph{identit\'e de carr\'e du
champ} reads:
\begin{eqnarray*}
 \int_{(0,T)\times \R^N}  |u|^{p-2} \langle Q \nabla_x u, \nabla_x u
\rangle\chi_{\{u\neq 0\}} \,d \mu^{\sharp} = - \frac{1}{p-1}
\int_{(0,T) \times \R^N} u |u|^{p-2}G_p^{\sharp} u \, d \mu^{\sharp},
 \end{eqnarray*}
for any $u\in D(G_{\infty}^{\sharp})$.

Similarly, under the Hypothesis of Theorem \ref{thm-3.5}, the
inequality
\begin{eqnarray*}
\int_{(0,T)\times \R^N}  |u|^{p-2} \langle Q \nabla_x u, \nabla_x u
\rangle\chi_{\{u\neq 0\}} \,d \mu^{\sharp} \leq - \frac{1}{p-1}
\int_{(0,T)\times \R^N} u |u|^{p-2}G_p^{\sharp} u \, d \mu^{\sharp},
\end{eqnarray*}
holds true for any $p\in (1,+\infty)$ and any $u\in D(G_{\infty}^{\sharp})$
(see \cite[Proposition 2.15]{lorenzi-lunardi-zamboni}).

In particular, this latter inequality allows to show that $D(G_p^{\sharp})$
is continuously embedded into $W^{0,1}_p({\mathbb T}\times\R^N,
\mu^{\sharp})$.
More precisely, it allows to show that the mapping $f\mapsto
Q^{1/2}\nabla_xf$ is bounded from $D(G_p^{\sharp})$ into $(L^p(\mathbb
T\times\R^N,\mu^{\sharp}))^N$.}
\end{remark}

\section{Some insight on the spectrum of $G_p^{\sharp}$}
\setcounter{equation}{0}
\label{sect-8}
Even if the spectrum of the generator $G_p^{\sharp}$ of the semigroup $(T(t))$ is not explicitly known some remarkable
results are available.

\begin{theorem}[{\cite[Theorems 3.15 \& 3.16]{lorenzi-lunardi-zamboni}}]
Let Hypotheses  $\ref{hyp1}$  and  $\ref{hyp10}$ hold.
Further, assume that the diffusion coefficients are independent of $x$ and the supremum of the function
$r$ in Hypothesis $\ref{hyp10}$ is negative. Then, for any $p\in
(1,+\infty)$, $D(G_p^{\sharp})$ is compactly embedded in $L^p(\mathbb T \times
\R^N,\mu)$.
Moreover,
\begin{enumerate}[\rm (i)]
\item
the spectrum of $G_p^{\sharp}$ consists of isolated eigenvalues independent
of $p$, for $p\in (1,+\infty)$. The associated spectral projections are independent of $p$, too;
\item
the growth bounds $\omega_p $ defined in \eqref{omegap,gammap} are
independent of $p\in (1,+\infty)$. Denoting by $\omega_0$ their
common value, for every $p\in (1,+\infty)$ we have
\begin{eqnarray*}
\omega_0=  \sup \,\{{\rm Re}\,\lambda: \lambda \in
\sigma(G_p^{\sharp})\setminus i\R \}.
\end{eqnarray*}
\end{enumerate}
\end{theorem}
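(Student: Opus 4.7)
The plan is to proceed in three stages: first, establish compactness of the embedding of $D(G_p^{\sharp})$ into $L^p(\mathbb T\times\R^N,\mu^{\sharp})$, so that $G_p^{\sharp}$ has compact resolvent; second, deduce $p$-independence of the eigenvalues and of the spectral projections from the consistency of $(T(t))$ across $L^p$-spaces; third, combine compact resolvent with the spectral mapping theorem and the identification of the purely imaginary spectrum with the spectrum of translations on $\Pi(L^p)$ in order to determine $\omega_0$.

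For the compact embedding I would first observe that the $x$-independence of $Q$ makes Hypothesis \ref{hyp10}(iii) trivially satisfied with $\zeta\equiv 0$, so that $\ell_p=\sup r<0$ for every $p\in (1,+\infty)$. By (the proof of) Theorem \ref{prop-part-car} this yields the continuous embedding $D(G_p^{\sharp})\hookrightarrow W^{0,1}_p(\mathbb T\times\R^N,\mu^{\sharp})$; combined with a characterization of $D(G_p^{\sharp})$ in the spirit of \eqref{DGinfty}, a bounded sequence $(f_n)\subset D(G_p^{\sharp})$ is bounded in $W^{1,2}_p(\mathbb T\times B_R)$ for every $R$, so Rellich--Kondrachov produces subsequences converging in $L^p(\mathbb T\times B_R,\mu^{\sharp})$. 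The tail estimate $\int_{\mathbb T\times B_R^c}|f_n|^p\,d\mu^{\sharp}\to 0$ uniformly in $n$ follows by pairing the resolvent identity $(\lambda_0-G_p^{\sharp})f_n=g_n$ against a weight of the form $\varphi|f_n|^{p-2}f_n$, where $\varphi$ is the Lyapunov function of \eqref{cond-mis-inv} (which is $\mu^{\sharp}$-integrable, as follows from $\int{\mathscr G}\varphi\,d\mu^{\sharp}=0$ applied to a suitable truncation of $\varphi$). A diagonal extraction then delivers the desired $L^p(\mathbb T\times\R^N,\mu^{\sharp})$-convergent subsequence.

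Once $G_p^{\sharp}$ has compact resolvent, standard spectral theory gives that $\sigma(G_p^{\sharp})$ consists of isolated eigenvalues of finite algebraic multiplicity. For the $p$-independence, the family $(T(t))$ is defined by the single formula \eqref{sem-evol} on $C_b(\mathbb T\times\R^N)$ and extended by density, so $R(\lambda,G_p^{\sharp})$ and $R(\lambda,G_q^{\sharp})$ agree on $L^p\cap L^q$ for $\lambda$ in the common resolvent set. If $G_p^{\sharp}u=\lambda u$, then $u=e^{-\lambda t}T(t)u$; applying parabolic interior Schauder estimates iteratively to $(D_s-{\mathscr A})u=-\lambda u$ and using the $L^p$-to-$C^{1,2}$-local smoothing of $(T(t))$, one shows that $u\in C_b(\mathbb T\times\R^N)$, hence $u\in L^q$ for every $q\in(1,+\infty)$ and is an eigenfunction of $G_q^{\sharp}$ for the same $\lambda$. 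Dunford contour integrals of the resolvents around each eigenvalue then give $p$-independence of the spectral projections as well. For the growth bound, compact resolvent entails the spectral mapping theorem $\sigma(T(t))\setminus\{0\}=e^{t\sigma(G_p^{\sharp})}$; since $\Pi$ commutes with $(T(t))$ and $T(t)|_{\Pi(L^p)}\simeq L^p(\mathbb T)$ is the translation group, whose generator has spectrum $\frac{2\pi i}{T}\Z\subset i\R$, and since testing the eigenvalue equation $\lambda u={\mathscr G}u$ with $\lambda\in i\R$ in the \emph{identit\'e de carr\'e du champ} forces $\nabla_xu\equiv 0$ (so that $u\in\Pi(L^p)$), one obtains
\[
\sigma(G_p^{\sharp}|_{(I-\Pi)L^p})=\sigma(G_p^{\sharp})\setminus i\R.
\]
Applying the spectral mapping theorem to $T(t)|_{(I-\Pi)L^p}$ then yields the claimed formula for $\omega_0$, and by Stage 2 the common value is $p$-independent.

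The main obstacle is the tightness estimate in Stage 1: controlling $\int_{\mathbb T\times B_R^c}|f_n|^p\,d\mu^{\sharp}$ uniformly requires not only $\mu^{\sharp}$-integrability of $\varphi$ but also growth control of $f_n$ against $\varphi$, which must be wrung out of the resolvent equation by a careful integration-by-parts using Hypothesis \ref{hyp10}(ii) and the structure of ${\mathscr G}\varphi$. A secondary subtlety in Stage 3 is making rigorous the \emph{identit\'e de carr\'e du champ} on eigenfunctions, which lie a priori only in $D(G_p^{\sharp})$ and not in $D(G_{\infty}^{\sharp})$; this should be handled by approximating each eigenfunction by elements of the core ${\mathscr C}$ from Theorem \ref{thm-cores}(ii) and passing to the limit in the carr\'e du champ identity of the corresponding remark.
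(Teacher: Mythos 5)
Your Stages 2 and 3 (consistency of the resolvents on $L^p\cap L^q$, smoothing of eigenfunctions to get $p$-independence, spectral mapping plus the carr\'e du champ identity to locate the purely imaginary spectrum inside $\Pi(L^p)$) follow the same lines as the cited reference and are sound in outline. The genuine gap is in Stage~1, precisely at the point you flag as ``the main obstacle'': the uniform smallness of $\int_{\mathbb T\times B_R^c}|f_n|^p\,d\mu^{\sharp}$. Your proposed device --- pairing $(\lambda_0-G_p^{\sharp})f_n=g_n$ against $\varphi|f_n|^{p-2}f_n$ with $\varphi$ the Lyapunov function of \eqref{cond-mis-inv} --- does not close as stated: integrating by parts produces terms such as $\int\langle Q\nabla_x\varphi,\nabla_x(|f_n|^{p-2}f_n)\rangle\,d\mu^{\sharp}$ and terms in which ${\mathscr G}$ acts on $\varphi$ against the (non-explicit) density of $\mu^{\sharp}$; none of these has a sign or an a priori bound, and the weighted estimate $\int\varphi|f_n|^p\,d\mu^{\sharp}\le C$ you would need does not follow from boundedness in $D(G_p^{\sharp})$ alone. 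Note also that $\varphi$ from \eqref{cond-mis-inv} need not have any quantitative growth, so even if the weighted bound held it would not by itself quantify the decay of the tails.

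The missing idea is the one the paper explicitly names as the main tool: the logarithmic Sobolev inequality for $\mu^{\sharp}$,
\begin{eqnarray*}
\int_{(0,T)\times\R^N}|u|^2\log(|u|)\,d\mu^{\sharp}\le \frac12\int_0^T\Pi|u|^2\log(\Pi|u|^2)\,ds
+\frac{\Lambda}{|r_0|}\int_{(0,T)\times\R^N}|\nabla_x u|^2\,d\mu^{\sharp}.
\end{eqnarray*}
This is exactly what the two special hypotheses of the theorem ($Q$ independent of $x$ and $\sup r=r_0<0$) are designed to deliver, via the pointwise gradient estimate \eqref{grad-punt} with $\ell_2=r_0<0$ and a Deuschel--Stroock-type argument along the evolution system $\{\mu_s^{\sharp}\}$. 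Once the LSI is in hand, a bounded sequence in $W^{0,1}_p(\mathbb T\times\R^N,\mu^{\sharp})$ has $\{|f_n|^p\}$ uniformly $\mu^{\sharp}$-integrable (de la Vall\'ee-Poussin, since $t\log t$ dominates $t$), and uniform integrability combined with $\mu^{\sharp}\bigl((\mathbb T\times B_R)^c\bigr)\to0$ gives the tail estimate for free; your local Rellich--Kondrachov step then finishes the compact embedding. So you should replace the Lyapunov-function pairing by the LSI route (or supply a complete alternative tightness argument); as written, Stage~1 is not a proof.
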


As in the autonomous case, the main tool in the proof of the
previous theorem is the Log-Sobolev inequality, which reads:
\begin{eqnarray*}
 \label{logsob ineq mod}
\int_{(0,T) \times \R^N } |u|^2 \log (|u|) d \mu^{\sharp}  \leq
\frac{1}{2}\int_0^T \Pi |u|^2 \log (\Pi |u|^2)ds + \frac{
\Lambda}{|r_0|} \int_{(0,T) \times \R^N } |\nabla_x u|^2 d
\mu^{\sharp},
\end{eqnarray*}
for any $u\in D(G_{\infty}^{\sharp})$,
where $\Lambda$ denotes the supremum of the eigenvalues of the matrix $Q(s)$ when $s$ varies in $[0,T]$.

%\begin{theorem}[Proposition 2.4 of \cite{geissert-lunardi-2}]
%Let $\omega_0$ be the maximum of all the positive constants $\omega$ such that \eqref{cond-U} holds true.
%Then, the following properties are satisfied:
%\begin{enumerate}[{\rm (i)}]
%\item
%for any $\omega>\omega_0$, there exists a positive constant $M(\omega)$ such that
%\begin{equation}
%\|P(t,s)f-\mu_sf\|_{L^p(\R^N,\mu_t)}\le M(\omega)e^{-\omega(t-s)}\|f\|_{L^p(\R^N,\mu_s)},\qquad\;\,s<t,
%\label{stima-asint}
%\end{equation}
%for any $f\in L^p(\R^N,\mu_s)$.
%\item
%if $\omega>\omega_0$ there exists no $M(\omega)>0$ such that \eqref{stima-asint} holds true.
%\item
%estimate \eqref{stima-asint} holds true if and only if all the eigenvalues of the operator $U(T,0)$
%\end{enumerate}
%\end{theorem}

The results in the previous theorem apply, in particular, in the case of the periodic nonautonomous Ornstein-Uhlenbeck
operator. In this situation, as in the autonomous case, some information on the eigenfunctions
of the operator $G_p^{\sharp}$ is available. More precisely,

\begin{theorem}
Let $\lambda$ be an eigenvalue of the operator $G_p^{\sharp}$ and let $u$ be a corresponding eigenfunction.
Then,
\begin{eqnarray*}
u(s,x)=\sum_{|\alpha|\le K}c_{\alpha}(t)x^{\alpha},\qquad\;\,s\in\R,\;\,x\in\R^N,
\end{eqnarray*}
where $c_{\alpha}\in W^{1,p}(\mathbb T)$ for any $\alpha$ and $K\le \omega_0^{-1}|{\rm Re}\lambda|$,
$\omega_0$ being the supremum of $\omega>0$ such that \eqref{cond-U} holds true for some $C=C(\omega)>0$.
\end{theorem}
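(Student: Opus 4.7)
The plan is to exploit the explicit Mehler-type formula \eqref{nonaut-OU} for $P_O(s,s-t)$ together with the identity $T(t)u=e^{\lambda t}u$ which follows from $G_p^{\sharp}u=\lambda u$. The key point is that iterated spatial derivatives of $T(t)u$ can be computed by letting the derivatives fall on the Gaussian kernel rather than on $u$: each derivative then produces a factor $\|U(s-t,s)^*\|$ which decays like $e^{-\omega t}$ by \eqref{cond-U}, whereas the eigenvalue contributes only $e^{\lambda t}$. For $k$ large enough the decay overwhelms the growth and forces $D^k_xu\equiv 0$, leaving $u(s,\cdot)$ a polynomial in $x$.

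Concretely, writing $T(t)u(s,x)=\int_{\R^N}k_{s,s-t}(z-U(s-t,s)x)u(s-t,z)\,dz$ with $k_{s,s-t}$ the Gaussian density of covariance $2Q_{s,s-t}$, I would differentiate $k$ times under the integral sign and use $D^k k_{s,s-t}(y)=H_k^{(s,t)}(y)k_{s,s-t}(y)$, where $H_k^{(s,t)}$ is a Hermite-type polynomial of degree $k$ in $y$ whose coefficients depend on $Q_{s,s-t}^{-1}$. This yields
\[
D^k_xT(t)u(s,x)=(-U(s-t,s)^*)^{\otimes k}\!\int H_k^{(s,t)}(y)\,k_{s,s-t}(y)\,u(s-t,y+U(s-t,s)x)\,dy.
\]
H\"older's inequality with respect to the probability measure $k_{s,s-t}(y)\,dy$ then gives
\[
|D^k_xT(t)u(s,x)|^p\le \|U(s-t,s)\|^{pk}\,M_k(s,t)\,P_O(s,s-t)(|u(s-t,\cdot)|^p)(x),
\]
with $M_k(s,t)=\bigl(\int|H_k^{(s,t)}|^{p'}k_{s,s-t}\,dy\bigr)^{p/p'}$. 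By \eqref{cond-U} one has $\|U(s-t,s)\|\le C_0\,e^{-\omega t}$; moreover $Q_{s,s-t}$ is monotonically increasing in $t$ with limit $Q_s$ as $t\to+\infty$ and is bounded below by $Q_{s,s-1}$, which is uniformly positive definite on $[0,T]$ by uniform ellipticity of $Q$. Hence $M_k(s,t)\le M_k<+\infty$ uniformly for $t\ge 1$ and $s\in[0,T]$. Integrating the pointwise bound against $\mu^{\sharp}$ and using the invariance relation
\[
\int_{(0,T)\times\R^N}P_O(s,s-t)(|u(s-t,\cdot)|^p)\,d\mu^{\sharp}=\|u\|_{L^p(\T\times\R^N,\mu^{\sharp})}^p,
\]
one obtains $\|D^k_xT(t)u\|_{L^p(\mu^{\sharp})}^p\le C_0^{pk}M_k\,e^{-p\omega kt}\,\|u\|_{L^p(\mu^{\sharp})}^p$ for every $t\ge 1$. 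Combining with the identity $D^k_xT(t)u=e^{\lambda t}D^k_xu$ and letting $t\to+\infty$ forces $D^k_xu\equiv 0$ whenever $\mathrm{Re}\,\lambda+\omega k>0$; taking $\omega$ arbitrarily close to $\omega_0$ proves that $u(s,\cdot)$ is a polynomial in $x$ of degree $K\le \omega_0^{-1}|\mathrm{Re}\,\lambda|$.

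It then remains to establish the regularity of the coefficients. Writing $u(s,x)=\sum_{|\alpha|\le K}c_\alpha(s)x^\alpha$, the nondegeneracy of the Gaussian measures $\mu^{\sharp}_s$ makes the monomials $\{x^\alpha\}_{|\alpha|\le K}$ linearly independent in $L^p(\R^N,\mu^{\sharp}_s)$, so that $u\in L^p(\T\times\R^N,\mu^{\sharp})$ forces $c_\alpha\in L^p(\T)$. Inserting the representation into $D_su=\mathscr{A}_Ou-\lambda u$ and observing that $\mathscr{A}_O$ maps polynomials of degree $\le K$ into themselves with coefficients that are $L^{\infty}(\T)$-combinations (via $q_{ij}$ and $b_{ij}$) of the $c_\beta$, one reads off ODEs of the form $c_\alpha'(s)=\tilde c_\alpha(s)-\lambda c_\alpha(s)$ with $\tilde c_\alpha\in L^p(\T)$, whence $c_\alpha\in W^{1,p}(\T)$. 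The main delicate step will be the uniform control of the Hermite moment $M_k(s,t)$, which requires an upper bound on $Q_{s,s-t}$ (from \eqref{cond-U}, via boundedness of $Q_s$) and a lower bound for $t\ge 1$ (from uniform ellipticity of $Q$); once this is in place, the rest of the argument is a clean exponential dichotomy.
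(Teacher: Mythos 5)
Your argument is correct and follows the same strategy as the paper's proof: combine the higher-order derivative decay estimate $\|D^{\alpha}_xT_O(t)u\|_{L^p(\mathbb T\times\R^N,\mu^{\sharp})}\le Ce^{-\omega|\alpha|t}\|u\|_{L^p(\mathbb T\times\R^N,\mu^{\sharp})}$ (valid for every $\omega<\omega_0$) with the eigenfunction identity $D^{\alpha}_xT_O(t)u=e^{\lambda t}D^{\alpha}_xu$, and let $t\to+\infty$ to kill $D^{\alpha}_xu$ whenever ${\rm Re}\,\lambda+\omega|\alpha|>0$. The only point of divergence is that the paper simply quotes the derivative estimate from Lemma 3.3 of Geissert--Lunardi, whereas you rederive it from the Mehler formula; your derivation (derivatives falling on the Gaussian kernel, H\"older against the kernel, invariance of $\mu^{\sharp}$) is the standard proof of that lemma and is sound, but two of your auxiliary claims about $Q_{s,s-t}$ are stated inaccurately: $Q_{s,s-t}$ is the truncation of $Q_{s-t}$, not of $Q_s$, so it does not tend to $Q_s$; and the uniform lower bound for $t\ge 1$ does not follow from comparison with $Q_{s,s-1}$, since the evolution-family identity only gives $Q_{s,s-t}\ge U(s-t,s-1)Q_{s,s-1}U(s-t,s-1)^*$, which degenerates as $t\to+\infty$. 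The correct uniform lower bound comes from the portion of the defining integral over $[s-t,s-t+1]$, on which $U(s-t,\xi)$ is invertible with inverse bounded by $e^{\|B\|_{\infty}}$, yielding $Q_{s,s-t}\ge \eta_0e^{-2\|B\|_{\infty}}I$. With that repair the uniform control of $M_k(s,t)$ goes through. A genuine added value of your write-up is the last step establishing $c_{\alpha}\in W^{1,p}(\mathbb T)$ via the ODEs obtained by inserting the polynomial ansatz into $D_su={\mathscr A}_Ou-\lambda u$, which the paper's proof leaves implicit.
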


\begin{proof}
A proof has been given in \cite[Proposition 2.5]{geissert-lunardi-2} in the case $p=2$ but it can be extended with the same technique
to the case $p\neq 2$. It is obtained adapting the techniques of the autonomous case (see \cite[Proposition 3.2]{metafune-pallara-enrico}) and is based on the pointwise gradient estimates.

Since it is quite easy and show once more the role played by the gradient estimates, we go into details.

Let $\lambda$ be an eigenvalue of $G_p^{\sharp}$ and let $u$ be a corresponding eigenfunction. Then,
$u\in C^{\infty}(\mathbb T\times\R^N)\cap L^2(\mathbb T\times\R^N,\mu^{\sharp})$ and
all its derivatives are in $L^2(\mathbb T\times\R^N,\mu^{\sharp})$ as well.
Moreover, $T_O(t)u=e^{\lambda t}u$ for any $t>0$.
Hence, $D^{\alpha}_xT_O(t)u=e^{\lambda t}D^{\alpha}_x u$ for any multiindex $\alpha$.

Using the gradient estimate
\begin{eqnarray*}
\|D^{\alpha}_xP_O(t,s)f\|_{L^p(\R^N,\mu_t)}\le Ce^{-\omega |\alpha|(t-s)}\|f\|_{L^p(\R^N,\mu_s)},\;\;\;t-s\gg 1,
\;\,f\in L^p(\R^N,\mu_s),
\end{eqnarray*}
proved in \cite[Lemma 3.3]{geissert-lunardi} for any $p\in [1,+\infty)$, any
multiindex $\alpha$, any $\omega<\omega_0$ and some positive
constant $C=C(\omega,\alpha)$, one can easily show that
\begin{eqnarray*}
\|D^{\alpha}_xT_O(t)u\|_{L^p(\mathbb T\times\R^N,\mu^{\sharp})}\le Ce^{-\omega |\alpha|t}\|u\|_{L^p(\mathbb T\times\R^N,\mu^{\sharp})},\quad\;\,t\gg 1.
\end{eqnarray*}
It follows that
\begin{eqnarray*}
e^{\lambda t}\|D^{\alpha}_xu\|_{L^p(\mathbb T\times\R^N,\mu^{\sharp})}\le Ce^{-\omega|\alpha| t}\|u\|_{L^p(\mathbb T\times\R^N,\mu^{\sharp})},
\end{eqnarray*}
from which we can infer that $D^{\alpha}_xu\equiv 0$ if ${\rm Re}\lambda> -\omega|\alpha|$, letting $t\to +\infty$.
Hence $u$ is a polynomial with degree not grater than $\omega^{-1}|{\rm Re}\lambda|$ for any $\omega\in (0,\omega_0)$.
The proof is now complete.
\end{proof}

\subsection*{Acknowledgments} The author wishes to thank the anonymous referee for the careful reading of the paper.

\end{document}